\let\oldmarginpar\marginpar
\renewcommand\marginpar[1]{\-\oldmarginpar[\raggedleft\footnotesize #1]%
{\raggedright\footnotesize #1}}
\begin{document}

\newtheorem{theorem}{Theorem}[section]
\newtheorem{corollary}[theorem]{Corollary}
\newtheorem{lemma}[theorem]{Lemma}
\newtheorem{proposition}[theorem]{Proposition}
\theoremstyle{definition}
\newtheorem{definition}[theorem]{Definition}
\theoremstyle{remark}
\newtheorem{remark}[theorem]{Remark}
\theoremstyle{definition}
\newtheorem{example}[theorem]{Example}

\def\rank{{\text{rank}\,}}

\numberwithin{equation}{section}

\title[On the estimates of warping functions]{On the estimates of warping functions on isometric immersions}

\author{Kwang-Soon Park}
\address{Division of General Mathematics, Room 4-107, Changgong Hall, University of Seoul, Seoul 02504, Republic of Korea}
\email{parkksn@gmail.com}
%\thanks{The research of the first author was supported by the National Research Foundation of Korea (NRF)
%grant funded by the Korea government (MEST)(No. 2009-0057445).}

\keywords{warping function, warped product, isometric immersion,
eigenvalue}

\subjclass[2000]{53C40, 53C26, 53C42.}   %Primary 57M50; Secondary 57N16, 53A20, 53C15}

\begin{abstract}
Using the results of \cite{P1}, we get some estimates of warping
functions for isometric immersions by changing the target manifolds
by some types of Riemannian manifolds: constant space forms and
Hermitian symmetric spaces. And we deal with equality cases and
obtain their applications. Finally, we give some open problems.
\end{abstract}

\maketitle
\section{Introduction}\label{intro}
\addcontentsline{toc}{section}{Introduction}

Let $(B, g_B)$ and $(F, g_F)$ be Riemannian manifolds. Given a
warped product manifold $M = B\times_f F$ with a warping function
$f$ (See \cite{P1}), we can consider an isometric immersion $\psi :
M \mapsto (\overline{M}, \overline{g})$, where $(\overline{M},
\overline{g})$ is a Riemannian manifold.

In 2018, B. Y. Chen \cite{C} proposed two Fundamental Questions on
the isometric immersion $\psi : M \mapsto (\overline{M},
\overline{g})$ and gave some recent results on these problems where
$(\overline{M}, \overline{g})$ is a K\"{a}hler manifold.

In 2014, as a generalization of Chen's works (\cite{C2},\cite{C3}),
the author \cite{P1} obtained two inequalities, which give the upper
bound and the lower bound of the function $\frac{\triangle f}{f}$.
Replacing the Riemannian manifold $(\overline{M}, \overline{g})$
with several types of Riemannian manifolds (i.e., real space forms,
complex space forms, quaternionic space forms, Sasakian space forms,
Kenmotsu space forms, Hermitian symmetric spaces: complex two-plane
Grassmannians, complex hyperbolic two-plane Grassmannians, complex
quadrics), we will obtain the upper bounds and the lower bounds of
the functions $\frac{\triangle f}{f}$. And by using these results,
we will get some equality cases of these relations and obtain their
applications.

We also know that warped product manifolds take an important
position in differential geometry and in physics, in particular in
general relativity. And Nash's result \cite{N} implies that each
warped product manifold can be isometrically embedded in a Euclidean
space.

The paper is organized as follows. In section 2 we remind some
notions, which will be used in the following sections. In section 3
we estimate the upper bounds and the lower bounds of the functions
$\frac{\triangle f}{f}$ for constant space forms $(\overline{M},
\overline{g})$ and have some equality cases and their applications.
In section 4 we do the works for Hermitian symmetric spaces
$(\overline{M}, \overline{g})$. In section 5 we give some open
problems.

\section{Preliminaries}\label{prelim}

In this section we recall some notions, which will be used in the
following sections.

Let $(\overline{M}, \overline{g})$ be an $n$-dimensional Riemannian
manifold and let $M$ be an $m$-dimensional submanifold of
$(\overline{M}, \overline{g})$. We denote by $\overline{\nabla}$ and
$\nabla$ the Levi-Civita connections of $\overline{M}$ and $M$,
respectively.

Then we get the {\em Gauss formula} and the {\em Weingarten formula}
\begin{eqnarray}
\overline{\nabla}_X Y &=& \nabla_X Y + h(X,Y),   \label{rel:2-l} \\
\overline{\nabla}_X N &=& -A_N X + D_X N,   \label{rel:2-2}
\end{eqnarray}
respectively, for tangent vector fields $X,Y\in \Gamma(TM)$ and a
normal vector field $N\in \Gamma(TM^{\perp})$, where $h$, $A$, $D$
denote the {\em second fundamental form}, the {\em shape operator},
the {\em normal connection} of $M$ in $\overline{M}$, respectively.

Then we know
\begin{equation}\label{rel:2-3}
\overline{g}(A_N X, Y) = \overline{g}(h(X,Y), N).
\end{equation}
Fix a local orthonormal frame $\{ v_1, \cdots, v_n \}$ of
$T\overline{M}$ with $v_i\in \Gamma(TM)$, $1\leq i \leq m$ and
$v_{\alpha}\in \Gamma(TM^{\perp})$, $m+1\leq \alpha \leq n$. We
define the mean curvature vector field $H$, the squared mean
curvature $H^2$, the squared norm $|| h ||^2$ of the second
fundamental form $h$ as follows:
\begin{eqnarray}
H &=& \frac{1}{m}\text{trace} h = \frac{1}{m}\sum_{i=1}^m h(v_i,v_i),   \label{rel:2-4} \\
H^2 &=& \overline{g}(H, H),   \label{rel:2-5}   \\
|| h ||^2 &=& \sum_{i,j=1}^m \overline{g}(h(v_i,v_j), h(v_i,v_j)).
\label{rel:2-6}
\end{eqnarray}
We call the submanifold $M \subset (\overline{M}, \overline{g})$
{\em totally geodesic} if the second fundamental form $h$ vanishes
identically. Denote by $R$, $\overline{R}$ the Riemannian curvature
tensors of $M$, $\overline{M}$, respectively.

Let
\begin{eqnarray}
K(X\wedge Y) &:=& \frac{g(R(X,Y)Y, X)}{g(X, X)g(Y, Y) - g(X, Y)^2},   \nonumber \\
\overline{K}(X\wedge Y) &:=& \frac{\overline{g}(\overline{R}(X,Y)Y,
X)}{\overline{g}(X, X)\overline{g}(Y, Y) - \overline{g}(X, Y)^2}
\nonumber
\end{eqnarray}
for $X,Y\in \Gamma(TM)$, where $g$ denotes the induced metric on $M$
of $(\overline{M}, \overline{g})$. i.e., given a plane $V \subset
T_p M$, $p\in M$, spanned by vectors $X,Y\in T_p M$, $K(V) =
K(X\wedge Y)$ and $\overline{K}(V) = \overline{K}(X\wedge Y)$ denote
the sectional curvatures of a plane $V$ in $M$ and in
$\overline{M}$, respectively.

Let
\begin{eqnarray}
(\inf \overline{K})(p) &:=& \inf \{ \overline{K}(V) \mid V \subset T_p M, \dim V = 2 \},   \label{rel:2-7} \\
(\sup \overline{K})(p) &:=& \sup \{ \overline{K}(V) \mid V \subset
T_p M, \dim V = 2 \}. \label{rel:2-8}
\end{eqnarray}
Let $\overline{R}(X,Y,Z,W) := \overline{g}(\overline{R}(X,Y)Z, W)$
for $X,Y,Z,W\in \Gamma(T\overline{M})$.

Given a $C^{\infty}-$function $f\in C^{\infty}(M)$, we define the
{\em Laplacian} $\triangle f$ of $f$ by
\begin{equation}\label{rel:2-9}
\triangle f := \sum_{i=1}^m ((\nabla_{v_i} v_i)f - v_i^2 f).
\nonumber
\end{equation}
Let $(B, g_B)$ and $(F, g_F)$ be Riemannian manifolds.

Throughout this paper, we will denote by $(M, g) := (B\times_f F,
g_B + f^2 g_F)$ the warped product manifold of Riemannian manifolds
$(B, g_B)$ and $(F, g_F)$ with the warping function $f : B \mapsto
\mathbb{R}^+$ (See \cite{P1}).

By Theorem 3.1, Theorem 3.4, and their proofs of \cite{P1}, we have

\begin{lemma}\label{lem:l}
Let $(M, g) = (B\times_f F, g_B + f^2 g_F)$ be a warped product
manifold and let $(\overline{M}, \overline{g})$ be a Riemannian
manifold. Let $\psi : (M, g)\mapsto (\overline{M}, \overline{g})$ be
an isometric immersion. Then we get
\begin{equation}\label{rel:3-l}
\frac{m_1 m^2}{2(m-1)} H^2 - \frac{m_1}{2} || h ||^2 + m_1 \inf
\overline{K} \leq \frac{\triangle f}{f} \leq \frac{m^2}{4m_2} H^2 +
m_1 \sup \overline{K},
\end{equation}
where $m_1 = \dim B$ and $m_2 = \dim F$ with $m=m_1+m_2$.
\end{lemma}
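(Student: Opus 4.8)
This is the content of Theorem~3.1 and Theorem~3.4 of \cite{P1} (themselves generalizing \cite{C2,C3}); for convenience I recall the structure of the argument. Fix $p\in M$ and a local orthonormal frame $\{v_1,\dots,v_m\}$ of $TM$ adapted to the warped product, so that $v_1,\dots,v_{m_1}$ are tangent to $B$ and $v_{m_1+1},\dots,v_m$ are tangent to $F$; extend it to a frame $\{v_1,\dots,v_n\}$ of $T\overline{M}$ as in Section~\ref{prelim}, and set $h^{\alpha}_{ij}:=\overline{g}(h(v_i,v_j),v_{\alpha})$, $P^{\alpha}:=\sum_{a=1}^{m_1}h^{\alpha}_{aa}$, $Q^{\alpha}:=\sum_{s=m_1+1}^{m}h^{\alpha}_{ss}$ and $H^{\alpha}:=\overline{g}(H,v_{\alpha})$, so that $mH^{\alpha}=P^{\alpha}+Q^{\alpha}$, $H^2=\sum_{\alpha}(H^{\alpha})^2$ and $\|h\|^2=\sum_{\alpha}\sum_{i,j}(h^{\alpha}_{ij})^2$. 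The starting point is the warped-product identity (this is where the warped structure is used; see the proof of Theorem~3.1 of \cite{P1}): for every unit $v_s$ tangent to $F$ one has $\frac{\triangle f}{f}=\sum_{a=1}^{m_1}K(v_a\wedge v_s)$. Summing this over the $m_2$ unit vectors $v_{m_1+1},\dots,v_m$ and substituting the Gauss equation $K(v_a\wedge v_s)=\overline{K}(v_a\wedge v_s)+\sum_{\alpha}\bigl(h^{\alpha}_{aa}h^{\alpha}_{ss}-(h^{\alpha}_{as})^2\bigr)$ gives
\[
m_2\,\frac{\triangle f}{f}=\sum_{a=1}^{m_1}\sum_{s=m_1+1}^{m}\overline{K}(v_a\wedge v_s)+\sum_{\alpha}\Bigl(P^{\alpha}Q^{\alpha}-\sum_{a,s}(h^{\alpha}_{as})^2\Bigr).
\]

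For the upper bound I would estimate the curvature sum by $m_1m_2\,\sup\overline{K}$ (it runs over $m_1m_2$ plane sections $v_a\wedge v_s$), discard the non-positive terms $-(h^{\alpha}_{as})^2$, and use $P^{\alpha}Q^{\alpha}\le\frac{1}{4}(P^{\alpha}+Q^{\alpha})^2=\frac{m^2}{4}(H^{\alpha})^2$; summing over $\alpha$ and dividing by $m_2$ then yields $\frac{\triangle f}{f}\le\frac{m^2}{4m_2}H^2+m_1\sup\overline{K}$. This direction is routine.

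The lower bound is the main obstacle. Here I would estimate the curvature sum from below by $m_1m_2\,\inf\overline{K}$ and bound the second-fundamental-form part from below; this is the step that produces the coefficients $\frac{m_1m^2}{2(m-1)}$ and $\frac{m_1}{2}$. Since $\|h^{\alpha}\|^2\ge\sum_a(h^{\alpha}_{aa})^2+\sum_s(h^{\alpha}_{ss})^2+2\sum_{a,s}(h^{\alpha}_{as})^2$, after discarding the non-negative quantity $(m_1m_2-1)\sum_{a,s}(h^{\alpha}_{as})^2$ it suffices to prove, for each $\alpha$,
\[
\frac{m_1m_2}{2}\Bigl(\sum_a(h^{\alpha}_{aa})^2+\sum_s(h^{\alpha}_{ss})^2\Bigr)+P^{\alpha}Q^{\alpha}\ \ge\ \frac{m_1m_2\,m^2}{2(m-1)}(H^{\alpha})^2 .
\]
Applying the Cauchy--Schwarz inequalities $\sum_a(h^{\alpha}_{aa})^2\ge (P^{\alpha})^2/m_1$ and $\sum_s(h^{\alpha}_{ss})^2\ge (Q^{\alpha})^2/m_2$ and writing $p=P^{\alpha}$, $q=Q^{\alpha}$, this reduces to the elementary inequality $m_2p^2+m_1q^2+2pq\ge\frac{m_1m_2}{m-1}(p+q)^2$ for all real $p,q$; multiplying out, it is equivalent to $m_2(m_2-1)p^2+m_1(m_1-1)q^2\ge 2(m_1-1)(m_2-1)pq$, which holds by the arithmetic--geometric mean inequality because $m_1m_2\ge(m_1-1)(m_2-1)$. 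Summing over $\alpha$ gives $\sum_{\alpha}\bigl(P^{\alpha}Q^{\alpha}-\sum_{a,s}(h^{\alpha}_{as})^2\bigr)\ge\frac{m_1m_2m^2}{2(m-1)}H^2-\frac{m_1m_2}{2}\|h\|^2$, and dividing the displayed identity by $m_2$ yields the left-hand inequality of \eqref{rel:3-l}.

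The warped-product identity used above is classical and is already available from \cite{P1}, so the genuinely new input is the optimization in the lower bound; once that is in place, the equality cases examined in the later sections reduce to recording when each inequality used above---the two bounds on $\overline{K}$, the Cauchy--Schwarz steps, and the arithmetic--geometric mean step---becomes an equality.
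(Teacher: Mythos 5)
Your proposal is correct and takes essentially the same route as the paper: the paper establishes Lemma~\ref{lem:l} simply by invoking Theorem~3.1, Theorem~3.4 and their proofs in \cite{P1}, and your reconstruction (the warped-product identity $\frac{\triangle f}{f}=\sum_a K(v_a\wedge v_s)$, the Gauss equation, and the algebraic estimates on $P^{\alpha},Q^{\alpha}$) is precisely the Chen-type argument underlying those theorems. The algebra in both the upper and lower bounds checks out, so there is nothing to add.
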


\section{Constant space forms}\label{esti}

In this section, we will estimate the functions $\frac{\triangle
f}{f}$ for isometric immersions $\psi : (M, g) = (B\times_f F, g_B +
f^2 g_F) \mapsto (\overline{M}, \overline{g})$ with constant space
forms $(\overline{M}, \overline{g})$. We also deal with  equality
cases and obtain their applications.

Using Lemma \ref{lem:l}, we obtain

\begin{theorem}\label{thm:l}
Let $(M, g) = (B\times_f F, g_B + f^2 g_F)$ be a warped product
manifold and $(\overline{M}, \overline{g}) = (\overline{M}(c),
\overline{g})$ a real space form of constant sectional curvature
$c$. Let $\psi : (M, g)\mapsto (\overline{M}, \overline{g})$ be an
isometric immersion. Then we have
\begin{equation}\label{rel:3-2}
\frac{m_1 m^2}{2(m-1)} H^2 - \frac{m_1}{2} || h ||^2 + m_1 c \leq
\frac{\triangle f}{f} \leq \frac{m^2}{4m_2} H^2 + m_1 c,
\end{equation}
where $m_1 = \dim B$ and $m_2 = \dim F$ with $m=m_1+m_2$.
\end{theorem}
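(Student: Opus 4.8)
The plan is to obtain this statement directly from Lemma \ref{lem:l} by specializing the ambient manifold $(\overline{M},\overline{g})$ to a real space form $\overline{M}(c)$. The only geometric input needed is the elementary fact that in a space of constant sectional curvature $c$, every $2$-plane has sectional curvature exactly $c$; equivalently $\overline{R}(X,Y)Z = c\,(\overline{g}(Y,Z)X - \overline{g}(X,Z)Y)$ for all $X,Y,Z$, from which $\overline{K}(X\wedge Y)=c$ follows by substitution into the definition of $\overline{K}$ given in Section \ref{prelim}.

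Concretely, I would first fix an arbitrary point $p\in M$ and an arbitrary $2$-dimensional subspace $V\subset T_pM$. Since $T_pM\subset T_p\overline{M}$ and $\overline{M}$ has constant curvature $c$, we get $\overline{K}(V)=c$. As $V$ and $p$ are arbitrary, this yields $(\inf\overline{K})(p)=c$ and $(\sup\overline{K})(p)=c$ for every $p\in M$, i.e.\ the functions $\inf\overline{K}$ and $\sup\overline{K}$ on $M$ (defined in \eqref{rel:2-7}--\eqref{rel:2-8}) are both identically equal to the constant $c$.

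Then I would substitute $\inf\overline{K}=c$ and $\sup\overline{K}=c$ into the chain of inequalities \eqref{rel:3-l} of Lemma \ref{lem:l}. This immediately produces
\[
\frac{m_1 m^2}{2(m-1)} H^2 - \frac{m_1}{2} || h ||^2 + m_1 c \leq \frac{\triangle f}{f} \leq \frac{m^2}{4m_2} H^2 + m_1 c,
\]
with $m_1=\dim B$, $m_2=\dim F$, $m=m_1+m_2$, which is exactly \eqref{rel:3-2}.

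There is essentially no obstacle: all the analytic content sits in Lemma \ref{lem:l} (which in turn rests on Theorems 3.1 and 3.4 of \cite{P1}), and the present theorem is just the observation that the two curvature bounds collapse to a single constant when the target is a real space form. The only point that would merit a sentence in the write-up is checking that the sign convention for $\overline{K}(X\wedge Y)$ fixed in Section \ref{prelim} matches the convention under which $\overline{M}(c)$ is declared to have ``constant sectional curvature $c$''; with the conventions of the Preliminaries this is immediate.
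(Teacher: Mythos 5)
Your proposal is correct and follows exactly the paper's argument: in a real space form the curvature tensor formula gives $\overline{K}\equiv c$, so $\inf\overline{K}=\sup\overline{K}=c$, and substituting into Lemma \ref{lem:l} yields \eqref{rel:3-2}. Nothing is missing; your extra remark about sign conventions is harmless but not needed.
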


\begin{proof}
We know that the Riemannian curvature tensor $\overline{R}$
\cite{KN} of $(\overline{M}, \overline{g})$ is given by
\begin{equation}\label{rel:3-3}
\overline{R}(X,Y)Z = c(\overline{g}(Y,Z)X - \overline{g}(X,Z)Y)
\end{equation}
for $X,Y,Z\in \Gamma(T\overline{M})$. Since $\inf \overline{K} =
\sup \overline{K} = c$, by Lemma \ref{lem:l}, we get the result.
\end{proof}

Then we easily obtain

\begin{corollary}
Let $(M, g) = (B\times_f F, g_B + f^2 g_F)$ be a warped product
manifold and $(\overline{M}, \overline{g}) = (\overline{M}(c),
\overline{g})$ a real space form of constant sectional curvature
$c$. Let $\psi : (M, g)\mapsto (\overline{M}, \overline{g})$ be an
isometric immersion. Assume that the manifold $(M, g)$ is a totally
geodesic submanifold of $(\overline{M}, \overline{g})$. Then we get
$$
m_1 c \leq \frac{\triangle f}{f} \leq m_1 c \ .
$$
\end{corollary}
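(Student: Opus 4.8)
The plan is to simply specialize Theorem~\ref{thm:l} to the totally geodesic case. Since the second fundamental form $h$ vanishes identically by hypothesis, the mean curvature vector field $H = \frac{1}{m}\sum_{i=1}^m h(v_i,v_i)$ is zero, and hence $H^2 = \overline{g}(H,H) = 0$ as well as $\|h\|^2 = 0$. Substituting these two facts into the chain of inequalities \eqref{rel:3-2} collapses both the left-hand side and the right-hand side: the term $\frac{m_1 m^2}{2(m-1)}H^2 - \frac{m_1}{2}\|h\|^2$ becomes $0$, so the lower bound reduces to $m_1 c$, and likewise $\frac{m^2}{4m_2}H^2$ becomes $0$, so the upper bound reduces to $m_1 c$.

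Thus the argument is a two-line substitution: first invoke Theorem~\ref{thm:l} to obtain \eqref{rel:3-2}, then use $H^2 = 0$ and $\|h\|^2 = 0$ (which follow from the definitions \eqref{rel:2-4}, \eqref{rel:2-5}, \eqref{rel:2-6} together with $h\equiv 0$) to conclude
$$
m_1 c = \frac{m_1 m^2}{2(m-1)} H^2 - \frac{m_1}{2} \| h \|^2 + m_1 c \leq \frac{\triangle f}{f} \leq \frac{m^2}{4m_2} H^2 + m_1 c = m_1 c.
$$

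There is essentially no obstacle here; the corollary is an immediate consequence of the theorem, and indeed the only mild subtlety worth flagging is notational rather than mathematical — one should be careful that $m_2 = \dim F$ is nonzero so that the expression $\frac{m^2}{4m_2}$ in the upper bound of \eqref{rel:3-2} is well defined, but this is already implicit in the setup of the warped product $(M,g) = (B\times_f F, g_B + f^2 g_F)$. The conclusion of course forces $\frac{\triangle f}{f} = m_1 c$ identically, which is the content worth emphasizing in any application, though the statement as given only asks for the displayed double inequality.
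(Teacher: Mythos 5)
Your proposal is correct and matches the paper's (implicit) argument exactly: the paper simply remarks that the corollary follows easily from Theorem~\ref{thm:l}, and the intended step is precisely your substitution $h\equiv 0 \Rightarrow H^2 = \|h\|^2 = 0$ in \eqref{rel:3-2}, collapsing both bounds to $m_1 c$. Nothing further is needed.
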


\begin{remark}
Let $(M, g) = (B\times_f F, g_B + f^2 g_F)$ be a warped product
manifold and $(\overline{M}, \overline{g}) = (\overline{M}(c),
\overline{g})$ a real space form of constant sectional curvature
$c$. Let $\psi : (M, g)\mapsto (\overline{M}, \overline{g})$ be an
isometric immersion. Assume that the manifold $(M, g)$ is a totally
geodesic submanifold of $(\overline{M}, \overline{g})$.

Then the warping function $f$ is an eigen-function with eigenvalue
$m_1 c$.

In particular, if $c = 0$ (i.e., $(\overline{M}, \overline{g})$ is a
Euclidean space $\mathbb{E}^n$.), then the warping function $f$ is a
harmonic function.
\end{remark}

\begin{lemma}
Let $(M, g) = (B\times_f F, g_B + f^2 g_F)$ be a warped product
manifold and $(\overline{M}, \overline{g}) = (\overline{M}(c),
\overline{g})$ a real space form of constant sectional curvature
$c$. Let $\psi : (M, g)\mapsto (\overline{M}, \overline{g})$ be an
isometric immersion.

There does not exist a totally geodesic submanifold $(M, g)$ of
$(\overline{M}, \overline{g})$ such that either the warping function
$f$ is not an eigen-function or the eigenvalue of $f$ is not equal
to $m_1 c$.
\end{lemma}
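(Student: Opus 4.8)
This final lemma is essentially the contrapositive/restatement of the Remark that precedes it, so the natural approach is to reduce it to what has already been established. The Remark says: if $(M,g)$ is a totally geodesic submanifold of a real space form $\overline{M}(c)$, then $f$ is an eigenfunction with eigenvalue $m_1 c$; that is, $\triangle f = m_1 c \, f$. The Lemma asserts the nonexistence of a totally geodesic $(M,g)$ for which \emph{either} $f$ fails to be an eigenfunction \emph{or} the eigenvalue differs from $m_1 c$. So the proof is just a proof by contradiction built directly on the Remark.

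First I would suppose, for contradiction, that such a totally geodesic submanifold $(M,g) \subset (\overline{M}(c), \overline{g})$ exists. By hypothesis it satisfies one of the two alternatives: (a) $f$ is not an eigenfunction of $\triangle$, or (b) $f$ is an eigenfunction but with eigenvalue $\lambda \ne m_1 c$. In either case I would invoke the Corollary (equivalently the $h \equiv 0$ specialization of Theorem \ref{thm:l}): since $h$ vanishes identically, $H^2 = 0$ and $\|h\|^2 = 0$, so \eqref{rel:3-2} collapses to
\[
m_1 c \le \frac{\triangle f}{f} \le m_1 c,
\]
hence $\frac{\triangle f}{f} = m_1 c$ pointwise on $M$, i.e. $\triangle f = m_1 c \, f$ everywhere. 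Since $f : B \to \mathbb{R}^+$ is positive (in particular nowhere zero), this is a genuine eigenvalue equation: $f$ \emph{is} an eigenfunction and its eigenvalue is exactly $m_1 c$. This contradicts alternative (a) outright, and contradicts (b) because it forces $\lambda = m_1 c$. Either way we reach a contradiction, so no such totally geodesic submanifold exists.

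The only subtlety worth flagging — and it is minor — is the meaning of "eigenfunction" when $f$ is globally defined on $B$ rather than being a compactly supported or $L^2$ eigenfunction of a self-adjoint operator; here "eigenfunction with eigenvalue $\mu$" should simply be read as "$f \not\equiv 0$ and $\triangle f = \mu f$ identically," which is exactly what the computation delivers, using positivity of $f$ to guarantee $f \not\equiv 0$. There is no real obstacle: the entire content is already packaged in the pinching inequality of Theorem \ref{thm:l} together with the observation that totally geodesic immersions annihilate both the mean curvature and the second fundamental form terms. I would therefore present the proof in three lines: assume such $(M,g)$ exists, apply the squeezed inequality to get $\triangle f = m_1 c \, f$, and note this contradicts both stated possibilities.
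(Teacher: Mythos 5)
Your proposal is correct and follows essentially the same route the paper intends: the lemma is just the contrapositive of the preceding Corollary and Remark, obtained by setting $h\equiv 0$ in Theorem \ref{thm:l} so that the inequality \eqref{rel:3-2} squeezes to $\triangle f = m_1 c\, f$, with positivity of $f$ guaranteeing it is a genuine eigenfunction. Nothing is missing; your three-line contradiction argument matches the paper's (unwritten but clearly intended) proof.
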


\begin{theorem}\label{thm:2}
Let $(M, g) = (B\times_f F, g_B + f^2 g_F)$ be a warped product
manifold and $(\overline{M}, \overline{g}) = (\overline{M}(c),
\overline{g}, J)$ a complex space form of constant holomorphic
sectional curvature $c$. Let $\psi : (M, g)\mapsto (\overline{M},
\overline{g})$ be an isometric immersion. Then we have
\begin{eqnarray}
\frac{m_1 m^2}{2(m-1)} H^2 - \frac{m_1}{2} || h ||^2 + m_1
\frac{c}{4} \leq
\frac{\triangle f}{f} \leq \frac{m^2}{4m_2} H^2 + m_1 c, & c\geq 0,   \label{rel:3-4} \\
\frac{m_1 m^2}{2(m-1)} H^2 - \frac{m_1}{2} || h ||^2 + m_1 c \leq
\frac{\triangle f}{f} \leq \frac{m^2}{4m_2} H^2 + m_1 \frac{c}{4}, &
c< 0, \label{rel:3-5}
\end{eqnarray}
where $m_1 = \dim B$ and $m_2 = \dim F$ with $m=m_1+m_2$.
\end{theorem}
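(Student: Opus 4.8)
The plan is to deduce Theorem \ref{thm:2} from Lemma \ref{lem:l} in exactly the same way Theorem \ref{thm:l} was obtained, the only new ingredient being the correct pinching of the sectional curvature $\overline{K}(V)$ of a complex space form $\overline{M}(c)$ restricted to an arbitrary real $2$-plane $V \subset T_p M$. First I would recall that for a complex space form with complex structure $J$ and constant holomorphic sectional curvature $c$, the curvature tensor is
\begin{equation}\label{rel:csf-curv}
\overline{R}(X,Y)Z = \tfrac{c}{4}\bigl(\overline{g}(Y,Z)X - \overline{g}(X,Z)Y + \overline{g}(JY,Z)JX - \overline{g}(JX,Z)JY + 2\,\overline{g}(X,JY)JZ\bigr).
\end{equation}
From this one computes, for an orthonormal pair $\{X,Y\}$ spanning $V$, that
\begin{equation}\label{rel:csf-sec}
\overline{K}(V) = \tfrac{c}{4}\bigl(1 + 3\,\overline{g}(X,JY)^2\bigr),
\end{equation}
so that $\overline{g}(X,JY)^2$ ranges over $[0,1]$, attaining $0$ on totally real planes and $1$ on holomorphic planes.

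Next I would read off the range of $\overline{K}(V)$ from \eqref{rel:csf-sec} according to the sign of $c$. If $c \geq 0$, then $\tfrac{c}{4}(1 + 3t)$ is increasing in $t \in [0,1]$, so $\inf \overline{K} = \tfrac{c}{4}$ (totally real planes) and $\sup \overline{K} = \tfrac{c}{4}\cdot 4 = c$ (holomorphic planes); substituting these into \eqref{rel:3-l} of Lemma \ref{lem:l} gives \eqref{rel:3-4}. If $c < 0$, the same expression is decreasing, so the roles swap: $\inf \overline{K} = c$ and $\sup \overline{K} = \tfrac{c}{4}$, which substituted into \eqref{rel:3-l} yields \eqref{rel:3-5}. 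That is the whole argument.

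The only genuine content is verifying \eqref{rel:csf-sec}, i.e. that the sectional curvature of a $2$-plane in a complex space form depends only on the "Kähler angle" through $\overline{g}(X,JY)^2$; this is classical (it is essentially the definition of a space of constant holomorphic sectional curvature together with the standard polarization identity), and one may simply cite \cite{KN} as was done for the real case. I do not expect any obstacle: the passage $\inf \le \tfrac{\triangle f}{f} \le \sup$ is handed to us by Lemma \ref{lem:l}, and the case split on the sign of $c$ is forced because taking the infimum versus the supremum of the linear function $t \mapsto \tfrac{c}{4}(1+3t)$ over $[0,1]$ reverses when $c$ changes sign. The one point to be careful about is not to conflate the holomorphic sectional curvature $c$ with the minimum/maximum of the (non-holomorphic) sectional curvature — writing down \eqref{rel:csf-sec} explicitly makes the bookkeeping transparent — and to note that when $m_1 \geq 2$ and $T_pM$ is not totally real the bounds $\tfrac{c}{4}$ and $c$ (resp. $c$ and $\tfrac{c}{4}$) may not both be realized by planes inside $T_pM$, but since \eqref{rel:3-l} already allows inequality this only weakens the two-sided estimate and the stated inequalities still hold.
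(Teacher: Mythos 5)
Your proposal is correct and follows essentially the same route as the paper: write down the curvature tensor of $\overline{M}(c)$, compute $\overline{K}(X\wedge Y)=\tfrac{c}{4}\bigl(1+3\,\overline{g}(JX,Y)^2\bigr)$ for orthonormal $X,Y$, pinch it between $\tfrac{c}{4}$ and $c$ according to the sign of $c$, and substitute into Lemma \ref{lem:l}. Your extra remark that the bounds need not be attained by planes inside $T_pM$ is a harmless refinement the paper does not make.
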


\begin{proof}
We see that the Riemannian curvature tensor $\overline{R}$ \cite{KN}
of $(\overline{M}, \overline{g})$ is given by
\begin{eqnarray}
&&\overline{R}(X,Y)Z \label{rel:3-6}  \\
&&= \frac{c}{4}(\overline{g}(Y,Z)X - \overline{g}(X,Z)Y +
\overline{g}(JY, Z)JX - \overline{g}(JX, Z)JY - 2\overline{g}(JX,
Y)JZ)  \nonumber
\end{eqnarray}
for $X,Y,Z\in \Gamma(T\overline{M})$. Given orthonormal vectors
$X,Y\in T_p \overline{M}$, $p\in \overline{M}$, we get
\begin{equation}\label{rel:3-7}
\overline{K}(X\wedge Y) = \overline{R}(X,Y,Y,X) = \frac{c}{4} (1 +
3\overline{g}(JX, Y)^2)
\end{equation}
so that since $0\leq |\overline{g}(JX, Y)|\leq 1$, we easily obtain
\begin{eqnarray}
\frac{c}{4} \leq \overline{K}(X\wedge Y) \leq  c, & c\geq 0,   \nonumber \\
c \leq \overline{K}(X\wedge Y) \leq \frac{c}{4}, & c< 0. \nonumber
\end{eqnarray}
From Lemma \ref{lem:l}, the result follows.
\end{proof}

\begin{corollary}
Let $(M, g) = (B\times_f F, g_B + f^2 g_F)$ be a warped product
manifold and $(\overline{M}, \overline{g}) = (\overline{M}(c),
\overline{g}, J)$ a complex space form of constant holomorphic
sectional curvature $c$. Let $\psi : (M, g)\mapsto (\overline{M},
\overline{g})$ be an isometric immersion. Assume that the manifold
$(M, g)$ is a totally geodesic totally real submanifold of
$(\overline{M}, \overline{g})$ (i.e., $J(TM) \subset TM^{\perp}$).

Then we have
$$
m_1 \frac{c}{4} \leq \frac{\triangle f}{f} \leq m_1 \frac{c}{4}.
$$
\end{corollary}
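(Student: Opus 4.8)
The plan is to specialize Theorem~\ref{thm:2} to the totally geodesic, totally real situation, where both the mean curvature contribution and the second fundamental form contribution vanish, leaving only the curvature term. First I would observe that since $(M,g)$ is totally geodesic in $(\overline{M},\overline{g})$, the second fundamental form $h$ vanishes identically, so by \eqref{rel:2-4}--\eqref{rel:2-6} we have $H = 0$, $H^2 = 0$, and $\|h\|^2 = 0$. Substituting these into \eqref{rel:3-4} (for $c \geq 0$) collapses both the upper and lower bounds: the lower bound becomes $\frac{m_1 m^2}{2(m-1)}\cdot 0 - \frac{m_1}{2}\cdot 0 + m_1\frac{c}{4} = m_1\frac{c}{4}$, and the upper bound becomes $\frac{m^2}{4m_2}\cdot 0 + m_1 c = m_1 c$. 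This alone is not tight enough, so the real content is to improve the upper bound from $m_1 c$ to $m_1\frac{c}{4}$ using the totally real hypothesis.

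The key step is to revisit the estimate \eqref{rel:3-7} restricted to planes $V \subset T_p M$. In the proof of Theorem~\ref{thm:2} the upper bound came from $\sup\overline{K} \leq c$, which is attained only on $J$-invariant (holomorphic) planes where $\overline{g}(JX,Y)^2 = 1$. But when $J(TM) \subset TM^{\perp}$, any two orthonormal vectors $X, Y \in T_p M$ satisfy $JX \in T_p M^{\perp}$, hence $\overline{g}(JX, Y) = 0$. Therefore \eqref{rel:3-7} gives $\overline{K}(X \wedge Y) = \frac{c}{4}$ for every plane $V \subset T_p M$, so that $\inf\overline{K} = \sup\overline{K} = \frac{c}{4}$ when the infimum and supremum are taken over planes tangent to $M$ (which is exactly the quantity appearing in \eqref{rel:2-7}--\eqref{rel:2-8}, as those are defined over $V \subset T_p M$). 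Plugging $\inf\overline{K} = \sup\overline{K} = \frac{c}{4}$ together with $H^2 = 0$ and $\|h\|^2 = 0$ directly into Lemma~\ref{lem:l} yields $m_1\frac{c}{4} \leq \frac{\triangle f}{f} \leq m_1\frac{c}{4}$, which is the claim; the sign of $c$ no longer matters because the curvature on tangent planes is pinned to the single value $\frac{c}{4}$.

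I would organize the write-up as: (1) note $h \equiv 0 \Rightarrow H^2 = \|h\|^2 = 0$; (2) note the totally real condition forces $\overline{g}(JX,Y) = 0$ for $X,Y \in T_pM$, so by \eqref{rel:3-7} every tangent plane has $\overline{K} = \frac{c}{4}$, giving $\inf\overline{K} = \sup\overline{K} = \frac{c}{4}$; (3) substitute into \eqref{rel:3-l} of Lemma~\ref{lem:l}. The only mild subtlety — and the step to state carefully rather than a genuine obstacle — is making explicit that $\inf\overline{K}$ and $\sup\overline{K}$ in Lemma~\ref{lem:l} range only over planes $V \subset T_pM$, so that the totally real hypothesis is enough to control them even though ambient holomorphic planes still have curvature $c$. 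Everything else is immediate substitution.
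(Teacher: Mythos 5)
Your argument is correct and is essentially the paper's own proof: the paper also applies Lemma~\ref{lem:l} together with \eqref{rel:3-7}, using that $h\equiv 0$ kills the $H^2$ and $\|h\|^2$ terms and that $J(TM)\subset TM^{\perp}$ forces $\overline{g}(JX,Y)=0$ on tangent planes, so $\inf\overline{K}=\sup\overline{K}=\frac{c}{4}$ there. Your explicit remark that the $\inf$ and $\sup$ in \eqref{rel:2-7}--\eqref{rel:2-8} range only over planes $V\subset T_pM$ is exactly the point that makes the argument work.
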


\begin{proof}
By Lemma \ref{lem:l} and (\ref{rel:3-7}), we obtain the result.
\end{proof}

\begin{remark}
Let $(M, g) = (B\times_f F, g_B + f^2 g_F)$ be a warped product
manifold and $(\overline{M}, \overline{g}) = (\overline{M}(c),
\overline{g}, J)$ a complex space form of constant holomorphic
sectional curvature $c$. Let $\psi : (M, g)\mapsto (\overline{M},
\overline{g})$ be an isometric immersion. Assume that the manifold
$(M, g)$ is a totally geodesic totally real submanifold of
$(\overline{M}, \overline{g})$.

Then the warping function $f$ is an eigen-function with eigenvalue
$\frac{m_1 c}{4}$.
\end{remark}

\begin{lemma}
Let $(M, g) = (B\times_f F, g_B + f^2 g_F)$ be a warped product
manifold and $(\overline{M}, \overline{g}) = (\overline{M}(c),
\overline{g}, J)$ a complex space form of constant holomorphic
sectional curvature $c$. Let $\psi : (M, g)\mapsto (\overline{M},
\overline{g})$ be an isometric immersion.

There does not exist a totally geodesic totally real submanifold
$(M, g)$ of $(\overline{M}, \overline{g})$ such that either the
warping function $f$ is not an eigen-function or the eigenvalue of
$f$ is not equal to $\frac{m_1 c}{4}$.
\end{lemma}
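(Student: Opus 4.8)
The statement to prove is a non-existence lemma: there is no totally geodesic totally real submanifold $(M,g)$ of a complex space form $\overline{M}(c)$ for which either $f$ fails to be an eigenfunction or the eigenvalue differs from $\tfrac{m_1 c}{4}$. My plan is to derive this directly as a logical consequence of the preceding Corollary, which asserts that under exactly these hypotheses one has $m_1\tfrac{c}{4}\le \tfrac{\triangle f}{f}\le m_1\tfrac{c}{4}$.

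First I would observe that the Corollary forces $\tfrac{\triangle f}{f} = \tfrac{m_1 c}{4}$ as an identity of functions on $M$, hence $\triangle f = \tfrac{m_1 c}{4} f$ everywhere on $M$. By the sign convention for $\triangle$ fixed in Section~\ref{prelim} (namely $\triangle f = \sum_i((\nabla_{v_i}v_i)f - v_i^2 f)$), this is precisely the statement that $f$ is an eigenfunction of the Laplacian of $(M,g)$ with eigenvalue $\tfrac{m_1 c}{4}$. Then I would argue by contradiction: suppose such a totally geodesic totally real submanifold exists with either $f$ not an eigenfunction, or $f$ an eigenfunction whose eigenvalue is some $\lambda \neq \tfrac{m_1 c}{4}$. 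In the first case we contradict $\triangle f = \tfrac{m_1 c}{4} f$ directly. In the second case, $\triangle f = \lambda f$ together with $\triangle f = \tfrac{m_1 c}{4} f$ gives $(\lambda - \tfrac{m_1 c}{4})f = 0$; since $f : B \to \mathbb{R}^+$ is strictly positive, $f$ is nowhere zero, so $\lambda = \tfrac{m_1 c}{4}$, a contradiction.

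The only genuinely substantive input is the squeeze inequality from the Corollary, which in turn rests on Lemma~\ref{lem:l} and the curvature computation \eqref{rel:3-7}: for a totally real submanifold $\overline{g}(JX,Y)=0$ for all $X,Y \in T_pM$, so the sectional curvature of every tangent plane equals $\tfrac{c}{4}$, giving $\inf\overline{K} = \sup\overline{K} = \tfrac{c}{4}$; and total geodesy kills the $H^2$ and $\|h\|^2$ terms. I would remind the reader of this only briefly, since it is already recorded. The main (and really only) obstacle is essentially bookkeeping: making sure the eigenvalue convention matches, and being careful that the contradiction is set up against the correct negation of the conjunction "$f$ is an eigenfunction \emph{and} its eigenvalue is $\tfrac{m_1 c}{4}$." There is no analytic difficulty, since positivity of $f$ makes the cancellation step immediate and no unique-continuation or regularity argument is needed.

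\begin{proof}
Suppose, for contradiction, that there exists a totally geodesic totally real submanifold $(M,g) = (B\times_f F, g_B + f^2 g_F)$ of $(\overline{M}, \overline{g}) = (\overline{M}(c), \overline{g}, J)$ such that either the warping function $f$ is not an eigen-function of the Laplacian of $(M,g)$, or $f$ is an eigen-function whose eigenvalue is not equal to $\frac{m_1 c}{4}$. Since $(M,g)$ is a totally geodesic totally real submanifold of $(\overline{M}, \overline{g})$, the preceding Corollary applies and yields
$$
m_1 \frac{c}{4} \leq \frac{\triangle f}{f} \leq m_1 \frac{c}{4},
$$
so that $\frac{\triangle f}{f} = \frac{m_1 c}{4}$ on $M$, i.e. $\triangle f = \frac{m_1 c}{4} f$. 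By the definition of $\triangle$ in Section \ref{prelim}, this says precisely that $f$ is an eigen-function with eigenvalue $\frac{m_1 c}{4}$, which already contradicts the first alternative. For the second alternative, if $f$ is an eigen-function with eigenvalue $\lambda$, then $\triangle f = \lambda f$, and combining this with $\triangle f = \frac{m_1 c}{4} f$ gives $\left( \lambda - \frac{m_1 c}{4} \right) f = 0$ on $M$. Since $f : B \mapsto \mathbb{R}^+$ is strictly positive, $f$ never vanishes, hence $\lambda = \frac{m_1 c}{4}$, contradicting the assumption $\lambda \neq \frac{m_1 c}{4}$. In either case we reach a contradiction, which completes the proof.
\end{proof}
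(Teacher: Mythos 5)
Your proof is correct and follows essentially the same route as the paper: the paper leaves this lemma as an immediate consequence of the preceding Corollary (and the Remark), namely that the squeeze $m_1\frac{c}{4}\le \frac{\triangle f}{f}\le m_1\frac{c}{4}$ forces $\triangle f = \frac{m_1 c}{4}f$, so $f$ is an eigen-function with eigenvalue $\frac{m_1 c}{4}$ and no submanifold violating this can exist. Your contradiction argument, including the use of positivity of $f$ to pin down the eigenvalue, is exactly the intended reasoning.
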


\begin{corollary}
Let $(M, g) = (B\times_f F, g_B + f^2 g_F)$ be a warped product
manifold and $(\overline{M}, \overline{g}) = (\overline{M}(c),
\overline{g}, J)$ a complex space form of constant holomorphic
sectional curvature $c$. Let $\psi : (M, g)\mapsto (\overline{M},
\overline{g})$ be an isometric immersion. Assume that the manifold
$(M, g)$ is a 2-dimensional totally geodesic complex submanifold of
$(\overline{M}, \overline{g})$ (i.e., $J(TM) = TM$).

Then we have
$$
m_1 c \leq \frac{\triangle f}{f} \leq m_1 c.
$$
\end{corollary}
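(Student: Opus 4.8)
The plan is to mimic the argument used for the totally real case, but now exploit the \emph{complex} condition $J(TM)=TM$ together with the $2$-dimensionality of $M$. First I would note that since $\dim M = 2$, any $2$-plane $V\subset T_pM$ is simply $V = T_pM$ itself, so there is a single sectional curvature to control at each point, namely $\overline{K}(T_pM)$, and consequently $\inf\overline{K}(p) = \sup\overline{K}(p) = \overline{K}(T_pM)$.

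Next I would pick an orthonormal basis $\{X, Y\}$ of $T_pM$. Because $M$ is a complex submanifold, $J(TM)=TM$, so $JX\in T_pM$; being a unit vector orthogonal to $X$ (as $J$ is an isometry preserving the metric and $\overline{g}(JX,X)=0$), we get $JX = \pm Y$ up to sign, hence $\overline{g}(JX,Y)^2 = 1$. Substituting this into the curvature identity \eqref{rel:3-7} from the proof of Theorem \ref{thm:2},
$$
\overline{K}(X\wedge Y) = \frac{c}{4}\bigl(1 + 3\,\overline{g}(JX,Y)^2\bigr) = \frac{c}{4}(1+3) = c.
$$
Therefore $\inf\overline{K} = \sup\overline{K} = c$ at every point of $M$.

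Finally I would feed $\inf\overline{K} = \sup\overline{K} = c$ into Lemma \ref{lem:l}. The lower bound in \eqref{rel:3-l} becomes $\frac{m_1 m^2}{2(m-1)}H^2 - \frac{m_1}{2}\|h\|^2 + m_1 c$, but the totally geodesic hypothesis forces $h\equiv 0$, hence $H=0$ and $\|h\|^2=0$, so the lower bound collapses to $m_1 c$; likewise the upper bound $\frac{m^2}{4m_2}H^2 + m_1 c$ collapses to $m_1 c$. This sandwiches $\frac{\triangle f}{f}$ between $m_1 c$ and $m_1 c$, giving the claimed equality.

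I do not expect any serious obstacle here: the only point requiring a moment's care is justifying $\overline{g}(JX,Y)^2 = 1$ from $J(TM)=TM$, i.e., verifying that on a $2$-real-dimensional complex submanifold the almost complex structure sends a unit vector to $\pm$ the orthogonal unit vector — this is immediate since $J$ is an orthogonal transformation with $J^2=-\mathrm{Id}$ acting on a $2$-dimensional space. Everything else is a direct substitution into the already-established Lemma \ref{lem:l} and the curvature formula \eqref{rel:3-7}, exactly parallel to the totally real corollary proved just above.
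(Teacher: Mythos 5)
Your proposal is correct and follows essentially the same route the paper intends: specialize Lemma \ref{lem:l} using the curvature formula (\ref{rel:3-7}), noting that $J(TM)=TM$ with $\dim M=2$ forces $\overline{g}(JX,Y)^2=1$ so $\inf\overline{K}=\sup\overline{K}=c$, and that total geodesy kills $H$ and $\|h\|^2$. Nothing further is needed.
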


\begin{remark}
Let $(M, g) = (B\times_f F, g_B + f^2 g_F)$ be a warped product
manifold and $(\overline{M}, \overline{g}) = (\overline{M}(c),
\overline{g}, J)$ a complex space form of constant holomorphic
sectional curvature $c$. Let $\psi : (M, g)\mapsto (\overline{M},
\overline{g})$ be an isometric immersion. Assume that the manifold
$(M, g)$ is a 2-dimensional totally geodesic complex submanifold of
$(\overline{M}, \overline{g})$.

Then the warping function $f$ is an eigen-function with eigenvalue
$m_1 c$.
\end{remark}

\begin{lemma}
Let $(M, g) = (B\times_f F, g_B + f^2 g_F)$ be a warped product
manifold and $(\overline{M}, \overline{g}) = (\overline{M}(c),
\overline{g}, J)$ a complex space form of constant holomorphic
sectional curvature $c$. Let $\psi : (M, g)\mapsto (\overline{M},
\overline{g})$ be an isometric immersion.

There does not exist a 2-dimensional totally geodesic complex
submanifold $(M, g)$ of $(\overline{M}, \overline{g})$ such that
either the warping function $f$ is not an eigen-function or the
eigenvalue of $f$ is not equal to $m_1 c$.
\end{lemma}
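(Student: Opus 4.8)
The plan is to argue by contradiction, exactly in the style of the two analogous lemmas that precede this one. Suppose there existed a 2-dimensional totally geodesic complex submanifold $(M,g) = (B\times_f F, g_B + f^2 g_F)$ of the complex space form $(\overline{M}(c), \overline{g}, J)$ for which either $f$ fails to be an eigen-function of $\triangle$, or $f$ is an eigen-function but with eigenvalue $\lambda \neq m_1 c$. I would then invoke the Corollary immediately above: since $(M,g)$ is a 2-dimensional totally geodesic complex submanifold, that Corollary gives $m_1 c \leq \frac{\triangle f}{f} \leq m_1 c$, hence $\frac{\triangle f}{f} \equiv m_1 c$ pointwise on $M$, i.e. $\triangle f = m_1 c\, f$ identically.

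The first consequence is that $f$ is an eigen-function of the Laplacian with eigenvalue $m_1 c$ (recalling $f : B \to \mathbb{R}^+$ is positive, so the quotient $\triangle f / f$ is a genuine function and the relation $\triangle f = (m_1 c) f$ is the eigenvalue equation). This already contradicts the first alternative. For the second alternative, if $f$ is assumed to be an eigen-function with some eigenvalue $\lambda$, then $\triangle f = \lambda f$; combined with $\triangle f = m_1 c\, f$ and positivity of $f$, we get $(\lambda - m_1 c) f = 0$, so $\lambda = m_1 c$, contradicting $\lambda \neq m_1 c$. Either way we reach a contradiction, which establishes the non-existence claim.

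I do not expect any serious obstacle here: the entire content has been front-loaded into the Corollary (which in turn rests on Lemma \ref{lem:l} and the curvature identity (\ref{rel:3-7}) for complex submanifolds, where $|\overline{g}(JX,Y)| = 1$ forces $\overline{K} = c$). The only point requiring a word of care is the logical bookkeeping of the disjunction "either $f$ is not an eigen-function or its eigenvalue is not $m_1 c$" — one must handle both disjuncts, but both collapse immediately once $\triangle f / f \equiv m_1 c$ is in hand. A short, self-contained proof would therefore read:

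\begin{proof}
Suppose, to the contrary, that such a 2-dimensional totally geodesic complex submanifold $(M,g) = (B\times_f F, g_B + f^2 g_F)$ of $(\overline{M}(c), \overline{g}, J)$ exists. By the preceding Corollary we have
$$
m_1 c \leq \frac{\triangle f}{f} \leq m_1 c
$$
on $M$, so that $\triangle f = m_1 c\, f$ identically. Since $f : B \to \mathbb{R}^+$ is positive, this says precisely that $f$ is an eigen-function of the Laplacian with eigenvalue $m_1 c$. In particular $f$ is an eigen-function, and if $f$ is regarded as an eigen-function with eigenvalue $\lambda$, then $\lambda f = \triangle f = m_1 c\, f$ forces $\lambda = m_1 c$. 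This contradicts the assumption that either $f$ is not an eigen-function or its eigenvalue is not equal to $m_1 c$, and the result follows.
\end{proof}
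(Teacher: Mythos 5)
Your proof is correct and follows exactly the route the paper intends: the lemma is an immediate consequence of the preceding Corollary (via (\ref{rel:3-7}) and Lemma \ref{lem:l}), which forces $\triangle f = m_1 c\, f$, so $f$ must be an eigen-function with eigenvalue $m_1 c$ and no submanifold violating either disjunct can exist. Your explicit handling of the two disjuncts and the contradiction is just a spelled-out version of the paper's argument.
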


\begin{theorem}\label{thm:3}
Let $(M, g) = (B\times_f F, g_B + f^2 g_F)$ be a warped product
manifold and $(\overline{M}, \overline{g}) = (\overline{M}(c), E,
\overline{g})$ a quaternionic space form of constant quaternionic
sectional curvature $c$. Let $\psi : (M, g)\mapsto (\overline{M},
\overline{g})$ be an isometric immersion. Then we obtain
\begin{eqnarray}
\frac{m_1 m^2}{2(m-1)} H^2 - \frac{m_1}{2} || h ||^2 + m_1
\frac{c}{4} \leq
\frac{\triangle f}{f} \leq \frac{m^2}{4m_2} H^2 + m_1 c, & c\geq 0,   \label{rel:3-8} \\
\frac{m_1 m^2}{2(m-1)} H^2 - \frac{m_1}{2} || h ||^2 + m_1 c \leq
\frac{\triangle f}{f} \leq \frac{m^2}{4m_2} H^2 + m_1 \frac{c}{4}, &
c< 0, \label{rel:3-9}
\end{eqnarray}
where $m_1 = \dim B$ and $m_2 = \dim F$ with $m=m_1+m_2$.
\end{theorem}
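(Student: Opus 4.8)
The plan is to mimic exactly the proof of Theorem~\ref{thm:2}, replacing the curvature tensor of a complex space form by that of a quaternionic space form and carrying out the same sectional-curvature pinching. First I would recall the explicit formula for the Riemannian curvature tensor $\overline{R}$ of a quaternionic space form $(\overline{M}(c), E, \overline{g})$, where $E$ is the rank-$3$ quaternionic structure bundle locally spanned by an almost-Hermitian triple $J_1, J_2, J_3$ satisfying the quaternionic relations. The standard expression (see \cite{KN} or the quaternionic-geometry literature) is
\begin{eqnarray}
\overline{R}(X,Y)Z &=& \frac{c}{4}\Bigl(\overline{g}(Y,Z)X - \overline{g}(X,Z)Y  \nonumber \\
&& + \sum_{a=1}^{3}\bigl(\overline{g}(J_a Y, Z)J_a X - \overline{g}(J_a X, Z)J_a Y - 2\overline{g}(J_a X, Y)J_a Z\bigr)\Bigr). \nonumber
\end{eqnarray}

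Next I would compute the sectional curvature of a plane spanned by orthonormal $X,Y\in T_p\overline{M}$. Contracting the above gives
\begin{equation}
\overline{K}(X\wedge Y) = \overline{R}(X,Y,Y,X) = \frac{c}{4}\Bigl(1 + 3\sum_{a=1}^{3}\overline{g}(J_a X, Y)^2\Bigr). \nonumber
\end{equation}
The key elementary estimate is that $0 \leq \sum_{a=1}^{3}\overline{g}(J_a X, Y)^2 \leq 1$: the lower bound is obvious, and the upper bound follows because $J_1 X, J_2 X, J_3 X, X$ are mutually orthogonal unit vectors, so $\sum_{a=1}^{3}\overline{g}(J_a X, Y)^2 = \sum_{a=1}^{3}\overline{g}(Y, J_a X)^2 \leq \|Y\|^2 = 1$ by Bessel's inequality. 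Hence
\begin{eqnarray}
\frac{c}{4} \leq \overline{K}(X\wedge Y) \leq c, & c\geq 0,  \nonumber \\
c \leq \overline{K}(X\wedge Y) \leq \frac{c}{4}, & c< 0. \nonumber
\end{eqnarray}
Thus $\inf\overline{K}$ and $\sup\overline{K}$ are pinched between $\frac{c}{4}$ and $c$ with the appropriate ordering according to the sign of $c$, and plugging these into Lemma~\ref{lem:l} yields \eqref{rel:3-8} and \eqref{rel:3-9} at once.

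I do not expect any serious obstacle here; the argument is structurally identical to Theorem~\ref{thm:2}. The only point requiring a little care — and the closest thing to a ``hard part'' — is the uniform bound $\sum_{a=1}^{3}\overline{g}(J_a X, Y)^2 \leq 1$, which in the complex case was the trivial $\overline{g}(JX,Y)^2\leq 1$ but now needs the orthonormality of $\{X, J_1X, J_2X, J_3X\}$ and Bessel's inequality. One should also note that, although the local frame $J_1,J_2,J_3$ of $E$ is only defined up to an $SO(3)$ gauge transformation, the quantity $\sum_a \overline{g}(J_aX,Y)^2$ is gauge-invariant, so the bounds on $\overline{K}$ are well defined globally. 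With these observations in place, the conclusion follows immediately from Lemma~\ref{lem:l}.
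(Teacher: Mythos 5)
Your proposal is correct and follows essentially the same route as the paper: the same curvature tensor formula for a quaternionic space form, the same computation $\overline{K}(X\wedge Y)=\frac{c}{4}(1+3\sum_{\alpha}\overline{g}(J_{\alpha}X,Y)^2)$, the same bound $0\leq\sum_{\alpha}\overline{g}(J_{\alpha}X,Y)^2\leq 1$ via orthonormality of $\{J_1X,J_2X,J_3X\}$, and the same application of Lemma~\ref{lem:l}. Your added remarks on Bessel's inequality and the $SO(3)$ gauge invariance of the bound are fine refinements but do not change the argument.
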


\begin{proof}
We know that the Riemannian curvature tensor $\overline{R}$ \cite{I}
of $(\overline{M}, \overline{g})$ is given by
\begin{eqnarray}
&&\overline{R}(X,Y)Z = \frac{c}{4}(\overline{g}(Y,Z)X - \overline{g}(X,Z)Y  \label{rel:3-10}   \\
&&+ \sum_{\alpha=1}^3 (\overline{g}(J_{\alpha} Y, Z)J_{\alpha} X -
\overline{g}(J_{\alpha} X, Z)J_{\alpha} Y - 2\overline{g}(J_{\alpha}
X, Y)J_{\alpha} Z)) \nonumber
\end{eqnarray}
for $X,Y,Z\in \Gamma(T\overline{M})$. Given orthonormal vectors
$X,Y\in T_p \overline{M}$, $p\in \overline{M}$, we have
\begin{equation}\label{rel:3-11}
\overline{K}(X\wedge Y) = \overline{R}(X,Y,Y,X) = \frac{c}{4} (1 +
3\sum_{\alpha=1}^3 \overline{g}(J_{\alpha} X, Y)^2).
\end{equation}
Since $\{ J_1X,J_2X,J_3X \}$ is orthonormal, we get $0\leq
\sum_{\alpha=1}^3 \overline{g}(J_{\alpha} X, Y)^2 \leq |Y|^2 = 1$ so
that
\begin{eqnarray}
\frac{c}{4} \leq \overline{K}(X\wedge Y) \leq  c, & c\geq 0,   \nonumber \\
c \leq \overline{K}(X\wedge Y) \leq \frac{c}{4}, & c< 0. \nonumber
\end{eqnarray}
From Lemma \ref{lem:l}, we obtain the result.
\end{proof}

\begin{corollary}
Let $(M, g) = (B\times_f F, g_B + f^2 g_F)$ be a warped product
manifold and $(\overline{M}, \overline{g}) = (\overline{M}(c), E,
\overline{g})$ a quaternionic space form of constant quaternionic
sectional curvature $c$. Let $\psi : (M, g)\mapsto (\overline{M},
\overline{g})$ be an isometric immersion. Assume that the manifold
$(M, g)$ is a totally geodesic totally real submanifold of
$(\overline{M}, \overline{g})$ (i.e., $J_{\alpha} (TM) \subset
TM^{\perp}$, $\forall \alpha\in \{ 1,2,3 \}$).

Then we have
$$
m_1 \frac{c}{4} \leq \frac{\triangle f}{f} \leq m_1 \frac{c}{4}.
$$
\end{corollary}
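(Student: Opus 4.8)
The plan is to specialize Theorem~\ref{thm:3} to the totally geodesic totally real case exactly as the analogous corollaries for real and complex space forms were obtained. First I would observe that since $\psi$ is totally geodesic, the second fundamental form $h$ vanishes identically, and hence $H = 0$ and $\|h\|^2 = 0$. Substituting these into the two-sided estimate \eqref{rel:3-8}--\eqref{rel:3-9} collapses both the $H^2$ and $\|h\|^2$ terms, so the inequalities reduce to bounds involving only $m_1$, $c$, and the sectional curvatures of $\overline{M}$ restricted to tangent planes of $M$.

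Next I would use the totally real hypothesis $J_\alpha(TM) \subset TM^\perp$ for $\alpha \in \{1,2,3\}$. For any orthonormal $X, Y \in T_pM$, the vector $J_\alpha Y$ lies in $T_pM^\perp$, so $\overline{g}(J_\alpha X, Y) = -\overline{g}(X, J_\alpha Y) = 0$ for each $\alpha$ (using $\overline{g}(X, J_\alpha Y) = 0$ since $X \in T_pM$ and $J_\alpha Y \in T_pM^\perp$). Therefore, by \eqref{rel:3-11}, $\overline{K}(X \wedge Y) = \frac{c}{4}(1 + 0) = \frac{c}{4}$ for every plane $V \subset T_pM$. Consequently $(\inf \overline{K})(p) = (\sup \overline{K})(p) = \frac{c}{4}$ at every point $p \in M$.

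Finally I would feed these facts back into Lemma~\ref{lem:l}: with $H^2 = 0$, $\|h\|^2 = 0$, $\inf \overline{K} = \sup \overline{K} = \frac{c}{4}$, the left-hand side of \eqref{rel:3-l} becomes $m_1 \frac{c}{4}$ and the right-hand side likewise becomes $m_1 \frac{c}{4}$, yielding $m_1 \frac{c}{4} \leq \frac{\triangle f}{f} \leq m_1 \frac{c}{4}$, which is the claim. I do not anticipate a genuine obstacle here; the only point requiring minor care is confirming that the totally real condition forces $\overline{g}(J_\alpha X, Y) = 0$ simultaneously for all three complex structures $J_\alpha$ and for all tangent planes, so that the restricted sectional curvature is the constant $\frac{c}{4}$ rather than merely bounded between $\frac{c}{4}$ and $c$; this is immediate from the definition of a totally real submanifold of a quaternionic space form. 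One could also invoke \eqref{rel:3-11} directly instead of Lemma~\ref{lem:l}, exactly as in the proof of the corresponding corollary in the complex case.
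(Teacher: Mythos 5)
Your proposal is correct and follows essentially the same route as the paper: the paper's proof is exactly ``by Lemma~\ref{lem:l} and (\ref{rel:3-11})'', i.e.\ the totally geodesic hypothesis kills $H^2$ and $\|h\|^2$, while the totally real condition forces $\overline{g}(J_\alpha X,Y)=0$ for tangent $X,Y$, so that by (\ref{rel:3-11}) every plane $V\subset T_pM$ has $\overline{K}(V)=\tfrac{c}{4}$ and hence $\inf\overline{K}=\sup\overline{K}=\tfrac{c}{4}$ in (\ref{rel:3-l}). Your remark that one should apply Lemma~\ref{lem:l} directly (rather than the weaker two-sided bounds of Theorem~\ref{thm:3}) is precisely the point, since the definitions (\ref{rel:2-7})--(\ref{rel:2-8}) take the infimum and supremum only over planes tangent to $M$.
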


\begin{proof}
By Lemma \ref{lem:l} and (\ref{rel:3-11}), we obtain the result.
\end{proof}

\begin{lemma}
Let $(M, g) = (B\times_f F, g_B + f^2 g_F)$ be a warped product
manifold and $(\overline{M}, \overline{g}) = (\overline{M}(c), E,
\overline{g})$ a quaternionic space form of constant quaternionic
sectional curvature $c$. Let $\psi : (M, g)\mapsto (\overline{M},
\overline{g})$ be an isometric immersion.

There does not exist a totally geodesic totally real submanifold
$(M, g)$ of $(\overline{M}, \overline{g})$ such that either the
warping function $f$ is not an eigen-function or the eigenvalue of
$f$ is not equal to $\frac{m_1 c}{4}$.
\end{lemma}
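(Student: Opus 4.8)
The final statement to prove is a "non-existence" lemma: there does not exist a totally geodesic totally real submanifold $(M,g) = (B\times_f F, g_B+f^2 g_F)$ of a quaternionic space form $\overline{M}(c)$ such that either $f$ is not an eigen-function or the eigenvalue of $f$ is not equal to $\frac{m_1 c}{4}$. The plan is to argue by contradiction, relying entirely on the corollary immediately preceding it.

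First I would suppose, toward a contradiction, that such a totally geodesic totally real submanifold $(M,g)$ of $\overline{M}(c)$ exists. By the preceding corollary (the one applying Lemma \ref{lem:l} and (\ref{rel:3-11}) to the totally geodesic totally real case), we have the chain
\[
m_1 \frac{c}{4} \leq \frac{\triangle f}{f} \leq m_1 \frac{c}{4},
\]
which forces $\frac{\triangle f}{f} = m_1\frac{c}{4}$ identically on $M$, i.e.\ $\triangle f = m_1\frac{c}{4}\, f$. Since $f : B \mapsto \mathbb{R}^+$ is positive (in particular not identically zero), this exhibits $f$ as an eigen-function of the Laplacian with eigenvalue exactly $\frac{m_1 c}{4}$. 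Here I would use the convention for $\triangle$ fixed in Section~\ref{prelim}, so there is no sign ambiguity in the word "eigenvalue''.

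Now I would close the argument: the hypothesis under which we seek a contradiction is that \emph{either} $f$ is not an eigen-function \emph{or} its eigenvalue is not $\frac{m_1 c}{4}$. But we have just shown $f$ \emph{is} an eigen-function and its eigenvalue \emph{is} $\frac{m_1 c}{4}$, contradicting both alternatives of the disjunction simultaneously. Hence no such submanifold can exist, which is the assertion. This lemma is really just the logical contrapositive packaging of the equality case already extracted in the corollary, so the only "content'' is invoking that corollary and reading off that a squeezed inequality is an equality.

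The main (and only mild) obstacle is bookkeeping rather than mathematics: one must be careful that "eigen-function'' is understood in the pointwise/ODE sense $\triangle f = \lambda f$ consistent with the sign convention of $\triangle$ adopted earlier, and that positivity of $f$ genuinely rules out the degenerate case $f\equiv 0$ which would make "eigenvalue'' meaningless. With those conventions in place the proof is a two-line deduction from the corollary, exactly as in the analogous non-existence lemmas stated earlier in the section for real space forms, complex space forms, and the complex totally geodesic case.
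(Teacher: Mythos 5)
Your argument is correct and is exactly the paper's (implicit) reasoning: the preceding corollary squeezes $\frac{\triangle f}{f}$ to the constant $\frac{m_1 c}{4}$, so $f$ is an eigen-function with that eigenvalue, and the lemma is just the contrapositive restatement. No differences worth noting.
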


\begin{corollary}
Let $(M, g) = (B\times_f F, g_B + f^2 g_F)$ be a warped product
manifold and $(\overline{M}, \overline{g}) = (\overline{M}(c), E,
\overline{g})$ a quaternionic space form of constant quaternionic
sectional curvature $c$. Let $\psi : (M, g)\mapsto (\overline{M},
\overline{g})$ be an isometric immersion. Assume that the manifold
$(M, g)$ is a 4-dimensional totally geodesic quaternionic
submanifold of $(\overline{M}, \overline{g})$ (i.e., $J_{\alpha}(TM)
= TM$, $\forall \alpha\in \{ 1,2,3 \}$).

Then we have
$$
m_1 c \leq \frac{\triangle f}{f} \leq m_1 c.
$$
\end{corollary}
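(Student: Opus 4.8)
The plan is to use the totally geodesic hypothesis to collapse the two nontrivial terms in Lemma~\ref{lem:l}, and then to show that every $2$-plane tangent to $M$ is a quaternionic plane, so that formula (\ref{rel:3-11}) forces its sectional curvature to be exactly $c$.

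First I would note that, $(M,g)$ being totally geodesic in $(\overline{M},\overline{g})$, the second fundamental form $h$ vanishes identically, so $H=0$, $H^2=0$ and $\|h\|^2=0$. Substituting this into (\ref{rel:3-l}) of Lemma~\ref{lem:l} leaves the chain
$$ m_1\,\inf\overline{K}\ \le\ \frac{\triangle f}{f}\ \le\ m_1\,\sup\overline{K}. $$
Hence it suffices to prove $\overline{K}(V)=c$ for every $2$-plane $V\subset T_pM$, $p\in M$.

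Next, fix $p\in M$ and an orthonormal pair $X,Y$ spanning a $2$-plane $V\subset T_pM$. Locally choose a frame $\{J_1,J_2,J_3\}$ of $E$ satisfying the quaternion relations, so that $\{X,J_1X,J_2X,J_3X\}$ is orthonormal in $T_p\overline{M}$. Since $\dim M=4$ and $J_\alpha(TM)=TM$ for all $\alpha$, this quadruple actually lies in $T_pM$ and is therefore an orthonormal basis of $T_pM$. Writing $Y=aX+\sum_{\alpha=1}^3 b_\alpha J_\alpha X$ and imposing $\overline{g}(X,Y)=0$ gives $a=0$ and $\sum_\alpha b_\alpha^2=1$, while $\overline{g}(J_\alpha X,Y)=b_\alpha$; thus $\sum_{\alpha=1}^3\overline{g}(J_\alpha X,Y)^2=1$. (This sum is in any case independent of the local frame of $E$, since an $SO(3)$-change of $\{J_1,J_2,J_3\}$ merely rotates the triple $(\overline{g}(J_\alpha X,Y))_{\alpha}$ and preserves its norm.) Feeding this into (\ref{rel:3-11}) yields $\overline{K}(X\wedge Y)=\tfrac{c}{4}(1+3\cdot 1)=c$.

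Therefore $(\inf\overline{K})(p)=(\sup\overline{K})(p)=c$ at every point, and the displayed inequality degenerates to $m_1 c\le \frac{\triangle f}{f}\le m_1 c$, which is the assertion. The one step that carries real content is the second paragraph above: deducing from $\dim M=4$ and $J_\alpha(TM)=TM$ that $\{X,J_1X,J_2X,J_3X\}$ spans $T_pM$, so that $Y\in\mathrm{span}\{J_1X,J_2X,J_3X\}$ and hence $\sum_\alpha\overline{g}(J_\alpha X,Y)^2=1$; the rest is the same totally-geodesic reduction already used in the earlier corollaries.
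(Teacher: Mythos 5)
Your proposal is correct and follows essentially the same route the paper intends: the totally geodesic hypothesis kills the $H^2$ and $\|h\|^2$ terms in Lemma~\ref{lem:l}, and the condition $J_{\alpha}(TM)=TM$ with $\dim M=4$ forces $\sum_{\alpha=1}^3\overline{g}(J_{\alpha}X,Y)^2=1$ for every orthonormal pair $X,Y\in T_pM$, so that (\ref{rel:3-11}) gives $\overline{K}\equiv c$ on tangent planes and hence $m_1c\le\frac{\triangle f}{f}\le m_1c$. You simply spell out the frame-independence and basis argument that the paper leaves implicit in its appeal to Lemma~\ref{lem:l} and (\ref{rel:3-11}).
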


\begin{lemma}
Let $(M, g) = (B\times_f F, g_B + f^2 g_F)$ be a warped product
manifold and $(\overline{M}, \overline{g}) = (\overline{M}(c), E,
\overline{g})$ a quaternionic space form of constant quaternionic
sectional curvature $c$. Let $\psi : (M, g)\mapsto (\overline{M},
\overline{g})$ be an isometric immersion.

There does not exist a 4-dimensional totally geodesic quaternionic
submanifold $(M, g)$ of $(\overline{M}, \overline{g})$ such that
either the warping function $f$ is not an eigen-function or the
eigenvalue of $f$ is not equal to $m_1 c$.
\end{lemma}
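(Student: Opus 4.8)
The plan is to argue by contradiction, exactly as in the pattern established by the preceding ``no-existence'' lemmas. Suppose there were a $4$-dimensional totally geodesic quaternionic submanifold $(M,g) = (B\times_f F, g_B + f^2 g_F)$ of $(\overline{M},\overline{g})$ for which either $f$ fails to be an eigen-function of $\triangle$, or $f$ is an eigen-function but with eigenvalue $\lambda \neq m_1 c$. The first step is to invoke the Corollary immediately above: since $(M,g)$ is $4$-dimensional, totally geodesic, and quaternionic (so $J_\alpha(TM) = TM$ for $\alpha = 1,2,3$), that Corollary gives the sandwiched inequality $m_1 c \leq \frac{\triangle f}{f} \leq m_1 c$ pointwise on $B$, hence $\frac{\triangle f}{f} = m_1 c$ identically, i.e.\ $\triangle f = m_1 c\, f$.

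The second step is simply to read off the two contradictions. The identity $\triangle f = (m_1 c) f$ says precisely that $f$ is an eigen-function of the Laplacian with eigenvalue $m_1 c$; this contradicts the hypothesis that $f$ is not an eigen-function, and it equally contradicts the hypothesis that the eigenvalue is different from $m_1 c$. Either horn of the disjunction in the statement is therefore untenable, so no such submanifold exists.

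I should be a little careful about one routine point: the Corollary supplies the chain of inequalities, but to conclude that $f$ is genuinely an eigen-function in the functional-analytic sense one wants $\triangle f = m_1 c\, f$ to hold on all of $B$ (or, via the warped-product structure, as functions on $M$ pulled back from $B$), and the Corollary's conclusion is pointwise, which is exactly what is needed. There is also the tacit standing assumption $f : B \to \mathbb{R}^+$, so division by $f$ is legitimate and the passage between $\frac{\triangle f}{f} = m_1 c$ and $\triangle f = m_1 c\, f$ is harmless. No genuine obstacle arises here; the only ``hard part'' is bookkeeping, namely making sure the disjunction ``either not an eigen-function or wrong eigenvalue'' is the exact logical negation of the conclusion forced by the Corollary, which it is. The proof is thus a two-line deduction from the Corollary, and I would write it as such.

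\begin{proof}
Suppose, to the contrary, that such a $4$-dimensional totally geodesic quaternionic submanifold $(M, g) = (B\times_f F, g_B + f^2 g_F)$ of $(\overline{M}, \overline{g})$ exists. By the preceding Corollary, we have
$$
m_1 c \leq \frac{\triangle f}{f} \leq m_1 c,
$$
so that $\frac{\triangle f}{f} = m_1 c$, i.e.\ $\triangle f = m_1 c\, f$. Hence $f$ is an eigen-function of the Laplacian $\triangle$ with eigenvalue $m_1 c$, which contradicts the assumption that either $f$ is not an eigen-function or the eigenvalue of $f$ is not equal to $m_1 c$. Therefore no such submanifold exists.
\end{proof}
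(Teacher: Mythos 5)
Your proof is correct and is exactly the argument the paper intends: the lemma is just the contrapositive restatement of the preceding Corollary, and your contradiction argument (use the Corollary to force $\triangle f = m_1 c\, f$, then note this refutes both horns of the disjunction) is the same route, with the division by $f$ justified since $f : B \to \mathbb{R}^+$. Nothing is missing.
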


\begin{theorem}\label{thm:4}
Let $(M, g) = (B\times_f F, g_B + f^2 g_F)$ be a warped product
manifold and $(\overline{M}, \overline{g}) = (\overline{M}(c), \phi,
\xi, \eta, \overline{g})$ a Sasakian space form of constant
$\phi$-sectional curvature $c$. Let $\psi : (M, g)\mapsto
(\overline{M}, \overline{g})$ be an isometric immersion. Then we
obtain
\begin{eqnarray}
\frac{m_1 m^2}{2(m-1)} H^2 - \frac{m_1}{2} || h ||^2 + m_1
 \leq
\frac{\triangle f}{f} \leq \frac{m^2}{4m_2} H^2 + m_1 c, & c\geq 1,   \label{rel:3-12} \\
\frac{m_1 m^2}{2(m-1)} H^2 - \frac{m_1}{2} || h ||^2 + m_1 c \leq
\frac{\triangle f}{f} \leq \frac{m^2}{4m_2} H^2 + m_1, & c< 1,
\label{rel:3-13}
\end{eqnarray}
where $m_1 = \dim B$ and $m_2 = \dim F$ with $m=m_1+m_2$.
\end{theorem}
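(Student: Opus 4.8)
The plan is to mimic the argument of Theorem~\ref{thm:2} and Theorem~\ref{thm:3}: recall the explicit form of the curvature tensor of a Sasakian space form, compute the sectional curvature of an arbitrary $2$-plane, extract sharp upper and lower bounds for it, and then feed these into Lemma~\ref{lem:l}. First I would write down the curvature tensor $\overline{R}$ of a Sasakian space form $(\overline{M}(c),\phi,\xi,\eta,\overline{g})$ in the standard form (as in Blair's book), namely
\begin{eqnarray}
\overline{R}(X,Y)Z &=& \frac{c+3}{4}(\overline{g}(Y,Z)X - \overline{g}(X,Z)Y) \nonumber \\
&& + \frac{c-1}{4}\bigl(\eta(X)\eta(Z)Y - \eta(Y)\eta(Z)X + \overline{g}(X,Z)\eta(Y)\xi - \overline{g}(Y,Z)\eta(X)\xi \nonumber \\
&& \qquad + \overline{g}(\phi Y,Z)\phi X - \overline{g}(\phi X,Z)\phi Y - 2\overline{g}(\phi X,Y)\phi Z\bigr) \nonumber
\end{eqnarray}
for $X,Y,Z\in\Gamma(T\overline{M})$.

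Next, for orthonormal $X,Y\in T_p\overline{M}$ I would compute
$$
\overline{K}(X\wedge Y)=\overline{R}(X,Y,Y,X)=\frac{c+3}{4}+\frac{c-1}{4}\bigl(3\overline{g}(\phi X,Y)^2-\eta(X)^2-\eta(Y)^2\bigr),
$$
using $\overline{g}(\phi X,\phi Y)=\overline{g}(X,Y)-\eta(X)\eta(Y)$ and $\eta(\xi)=1$. The quantity $t:=3\overline{g}(\phi X,Y)^2-\eta(X)^2-\eta(Y)^2$ must be bounded over all orthonormal pairs. Since $\{X,Y\}$ is orthonormal and $|\xi|=1$, one has $\eta(X)^2+\eta(Y)^2\le 1$, and $\overline{g}(\phi X,Y)^2\le |\phi X|^2|Y|^2 = (1-\eta(X)^2)\le 1$; a short case analysis (the extreme $\overline{g}(\phi X,Y)^2=1$ forces $X,Y$ orthogonal to $\xi$, hence $\eta(X)=\eta(Y)=0$, while the extreme $\eta(X)^2+\eta(Y)^2=1$ forces $\overline{g}(\phi X,Y)=0$ after noting $\phi\xi=0$) shows $-1\le t\le 3$, both bounds attained. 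Therefore
$$
\overline{K}(X\wedge Y)\in\Bigl[\tfrac{c+3}{4}-\tfrac{c-1}{4},\ \tfrac{c+3}{4}+\tfrac{3(c-1)}{4}\Bigr]=[1,\,c]\quad\text{if }c\ge 1,
$$
and the reversed interval $[c,1]$ if $c<1$. Consequently $(\inf\overline{K})(p)=\min\{1,c\}$ and $(\sup\overline{K})(p)=\max\{1,c\}$, and substituting into \eqref{rel:3-l} of Lemma~\ref{lem:l} gives exactly \eqref{rel:3-12} and \eqref{rel:3-13}.

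The only delicate point is the optimization of $t$ over orthonormal $2$-frames, i.e.\ verifying that the two naive bounds $\eta(X)^2+\eta(Y)^2\le1$ and $\overline{g}(\phi X,Y)^2\le1$ cannot be violated and cannot be simultaneously strengthened. The cleanest way is to decompose $X=a\xi+X_0$, $Y=b\xi+Y_0$ with $X_0,Y_0\perp\xi$; then $\eta(X)=a$, $\eta(Y)=b$, $a^2+b^2\le a^2+|X_0|^2=1$ gives nothing useful directly, so instead use that $a^2+b^2\le 1$ follows because $\xi$ has unit length and $\{X,Y\}$ is orthonormal (project $\xi$ onto $\mathrm{span}\{X,Y\}$), and $\overline{g}(\phi X,Y)=\overline{g}(\phi X_0,Y_0)$ since $\phi\xi=0$ and $\phi X_0\perp\xi$, with $|\overline{g}(\phi X_0,Y_0)|\le|X_0||Y_0|=\sqrt{(1-a^2)(1-b^2)}$. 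Then $t\le 3(1-a^2)(1-b^2)-(a^2+b^2)$, and maximizing the right side over $a^2+b^2\le1$ gives $3$ at $a=b=0$; minimizing gives, at $a^2+b^2=1$, the value $-1$. This elementary two-variable estimate is all that is needed, and the rest is a direct citation of Lemma~\ref{lem:l} exactly as in the preceding theorems.
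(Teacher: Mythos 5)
Your proposal is correct and follows essentially the same route as the paper: write the Sasakian space form curvature tensor, compute $\overline{K}(X\wedge Y)=\frac{c+3}{4}+\frac{c-1}{4}\bigl(3\overline{g}(\phi X,Y)^2-\eta(X)^2-\eta(Y)^2\bigr)$, bound that quantity between $-1$ and $3$, and apply Lemma~\ref{lem:l}. Your only deviation is that you actually prove the bounds $-1\le 3\overline{g}(\phi X,Y)^2-\eta(X)^2-\eta(Y)^2\le 3$ by the decomposition $X=a\xi+X_0$, $Y=b\xi+Y_0$ and a two-variable estimate, whereas the paper merely exhibits the extremal configurations ($\xi\in\mathrm{Span}(X,Y)$, resp.\ $Y=\phi X$ with $\eta(X)=0$) and asserts the inequalities; this is a minor tightening, not a different approach.
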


\begin{proof}
We see that the Riemannian curvature tensor $\overline{R}$ \cite{O}
of $(\overline{M}, \overline{g})$ is given by
\begin{eqnarray}
&&\overline{R}(X,Y)Z = \frac{c+3}{4}(\overline{g}(Y,Z)X - \overline{g}(X,Z)Y)  \label{rel:3-14}   \\
&&+ \frac{c-1}{4} (\eta(X)\eta(Z)Y - \eta(Y)\eta(Z)X +
\eta(Y)\overline{g}(X,Z)\xi - \eta(X)\overline{g}(Y,Z)\xi
\nonumber   \\
&&+ \overline{g}(\phi Y, Z)\phi X - \overline{g}(\phi X, Z)\phi Y -
2\overline{g}(\phi X, Y)\phi Z) \nonumber
\end{eqnarray}
for $X,Y,Z\in \Gamma(T\overline{M})$. Given orthonormal vectors
$X,Y\in T_p \overline{M}$, $p\in \overline{M}$, we have
\begin{equation}\label{rel:3-15}
\overline{K}(X\wedge Y) = \overline{R}(X,Y,Y,X) = \frac{c+3}{4} +
\frac{c-1}{4} (-\eta(Y)^2 - \eta(X)^2 + 3\overline{g}(\phi X, Y)^2).
\end{equation}
If $\xi\in \text{Span}(X,Y)$, then $-\eta(Y)^2 - \eta(X)^2 +
3\overline{g}(\phi X, Y)^2 = -1$. If $Y = \phi X$ and $\eta(X) = 0$,
then $-\eta(Y)^2 - \eta(X)^2 + 3\overline{g}(\phi X, Y)^2 = 3$.
Hence we get $-1\leq -\eta(Y)^2 - \eta(X)^2 + 3\overline{g}(\phi X,
Y)^2 \leq 3$ so that
\begin{eqnarray}
1 \leq \overline{K}(X\wedge Y) \leq  c, & c\geq 1,   \nonumber \\
c \leq \overline{K}(X\wedge Y) \leq 1, & c< 1. \nonumber
\end{eqnarray}
From Lemma \ref{lem:l}, the result follows.
\end{proof}

\begin{corollary}
Let $(M, g) = (B\times_f F, g_B + f^2 g_F)$ be a warped product
manifold and $(\overline{M}, \overline{g}) = (\overline{M}(c), \phi,
\xi, \eta, \overline{g})$ a Sasakian space form of constant
$\phi$-sectional curvature $c$. Let $\psi : (M, g)\mapsto
(\overline{M}, \overline{g})$ be an isometric immersion. Assume that
the manifold $(M, g)$ is a totally geodesic $\phi$-totally real
submanifold of $(\overline{M}, \overline{g})$ with $\xi\in
\Gamma(TM^{\perp})$ (i.e., $\phi (TM) \subset TM^{\perp}$).

Then we get
$$
m_1 \frac{c+3}{4} \leq \frac{\triangle f}{f} \leq m_1 \frac{c+3}{4}.
$$
\end{corollary}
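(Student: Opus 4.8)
The plan is to specialize Lemma \ref{lem:l} to the case where $(M,g)$ sits inside the Sasakian space form as a totally geodesic $\phi$-totally real submanifold with $\xi \in \Gamma(TM^{\perp})$, and to show that under these hypotheses the curvature term in the sandwich inequality collapses to the single constant $\frac{c+3}{4}$ on every tangent $2$-plane. Since the submanifold is totally geodesic, the second fundamental form $h$ vanishes identically, hence $H = 0$ and $\|h\|^2 = 0$, so both sides of \eqref{rel:3-l} reduce to $m_1 \inf \overline{K}$ and $m_1 \sup \overline{K}$ respectively, with the infimum and supremum taken over $2$-planes $V \subset T_pM$. Thus it suffices to prove that $\overline{K}(X\wedge Y) = \frac{c+3}{4}$ for every pair of orthonormal $X, Y \in T_pM$.

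First I would recall the sectional curvature formula \eqref{rel:3-15} established in the proof of Theorem \ref{thm:4}:
\begin{equation}
\overline{K}(X\wedge Y) = \frac{c+3}{4} + \frac{c-1}{4}\left(-\eta(Y)^2 - \eta(X)^2 + 3\overline{g}(\phi X, Y)^2\right). \nonumber
\end{equation}
The key point is that for $X, Y \in T_pM$ the bracketed expression vanishes. Indeed, $\xi \in \Gamma(TM^{\perp})$ means $\eta(X) = \overline{g}(X,\xi) = 0$ and likewise $\eta(Y) = 0$ for any tangent vectors $X, Y$. Moreover the $\phi$-totally real condition $\phi(TM) \subset TM^{\perp}$ gives $\overline{g}(\phi X, Y) = 0$ because $\phi X$ is normal while $Y$ is tangent. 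Hence the entire correction term $\frac{c-1}{4}(-\eta(Y)^2 - \eta(X)^2 + 3\overline{g}(\phi X, Y)^2)$ is zero, and $\overline{K}(X\wedge Y) = \frac{c+3}{4}$ identically over all tangent $2$-planes at every point.

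It follows that $(\inf \overline{K})(p) = (\sup \overline{K})(p) = \frac{c+3}{4}$, and substituting into \eqref{rel:3-l} together with $H^2 = 0$ and $\|h\|^2 = 0$ yields
\begin{equation}
m_1 \frac{c+3}{4} \leq \frac{\triangle f}{f} \leq m_1 \frac{c+3}{4}, \nonumber
\end{equation}
which is the claimed estimate. There is no real obstacle here: the only thing that requires care is correctly reading off the consequences of the two structural hypotheses ($\xi$ normal, $\phi(TM)$ normal) for the three scalar quantities $\eta(X)$, $\eta(Y)$, $\overline{g}(\phi X, Y)$ appearing in \eqref{rel:3-15}; once that is done, the result is immediate from Lemma \ref{lem:l}. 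I would present the argument in essentially three lines, citing \eqref{rel:3-15} and Lemma \ref{lem:l} as the cited prior results, exactly in the style of the preceding corollaries.
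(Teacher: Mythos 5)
Your proposal is correct and follows exactly the paper's route: the paper's proof is just the citation of Lemma \ref{lem:l} and (\ref{rel:3-15}), and you have filled in the same computation, namely that total geodesy kills $H^2$ and $\|h\|^2$ while $\xi\in\Gamma(TM^{\perp})$ and $\phi(TM)\subset TM^{\perp}$ force $\eta(X)=\eta(Y)=\overline{g}(\phi X,Y)=0$ on tangent vectors, so $\inf\overline{K}=\sup\overline{K}=\frac{c+3}{4}$ over the $2$-planes $V\subset T_pM$ appearing in (\ref{rel:2-7})--(\ref{rel:2-8}). Nothing further is needed.
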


\begin{proof}
By Lemma \ref{lem:l} and (\ref{rel:3-15}), we obtain the result.
\end{proof}

\begin{lemma}
Let $(M, g) = (B\times_f F, g_B + f^2 g_F)$ be a warped product
manifold and $(\overline{M}, \overline{g}) = (\overline{M}(c), \phi,
\xi, \eta, \overline{g})$ a Sasakian space form of constant
$\phi$-sectional curvature $c$. Let $\psi : (M, g)\mapsto
(\overline{M}, \overline{g})$ be an isometric immersion.

There does not exist a totally geodesic $\phi$-totally real
submanifold $(M, g)$ of $(\overline{M}, \overline{g})$ such that
either the warping function $f$ is not an eigen-function or the
eigenvalue of $f$ is not equal to $\frac{m_1(c+3)}{4}$.
\end{lemma}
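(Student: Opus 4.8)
The plan is to argue by contradiction in exactly the same spirit as the earlier ``nonexistence'' lemmas. So suppose, for the sake of contradiction, that there exists a warped product $(M,g) = (B\times_f F, g_B + f^2 g_F)$ which is a totally geodesic $\phi$-totally real submanifold of the Sasakian space form $(\overline{M}(c),\phi,\xi,\eta,\overline{g})$ with $\xi\in\Gamma(TM^\perp)$, and yet $f$ either fails to be an eigen-function or is an eigen-function whose eigenvalue is not $\frac{m_1(c+3)}{4}$. The goal is to derive a contradiction from the Corollary immediately preceding this Lemma.

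First I would invoke that Corollary: under precisely the stated hypotheses (totally geodesic $\phi$-totally real, $\xi$ normal to $M$), it gives the two-sided bound $m_1\frac{c+3}{4}\le \frac{\triangle f}{f}\le m_1\frac{c+3}{4}$, hence the pointwise identity $\frac{\triangle f}{f} = m_1\frac{c+3}{4}$ on all of $M$. Multiplying through by $f>0$ (the warping function is $\mathbb{R}^+$-valued by the standing convention), this reads $\triangle f = \frac{m_1(c+3)}{4}\,f$, which is exactly the statement that $f$ is an eigen-function of the Laplacian with eigenvalue $\frac{m_1(c+3)}{4}$. This directly contradicts the assumption that $f$ is not an eigen-function, or that its eigenvalue differs from $\frac{m_1(c+3)}{4}$. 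Hence no such submanifold exists, which is the claim.

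The only point needing a word of care is the sign convention for $\triangle$ and what ``eigen-function'' means here: the paper's Laplacian is $\triangle f = \sum_{i}((\nabla_{v_i}v_i)f - v_i^2 f)$, and in the Remarks preceding this Lemma the phrase ``$f$ is an eigen-function with eigenvalue $\lambda$'' is used to mean $\triangle f = \lambda f$ with this sign convention, so I would simply adopt that same reading for consistency. No genuine obstacle arises: the Lemma is a logical restatement of the equality case already proved in the Corollary, so the ``hard part'' is essentially nil — it is just the contrapositive packaging. I would therefore keep the proof to two or three lines, citing the Corollary and the definition of the Laplacian, and noting that $f$ being strictly positive lets us pass freely between $\frac{\triangle f}{f} = \lambda$ and $\triangle f = \lambda f$.

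\begin{proof}
Suppose, to the contrary, that such a totally geodesic $\phi$-totally real submanifold $(M,g)$ of $(\overline{M},\overline{g})$ with $\xi\in\Gamma(TM^\perp)$ exists. By the Corollary above, under these hypotheses we have
\begin{equation}\label{rel:3-16}
m_1 \frac{c+3}{4} \leq \frac{\triangle f}{f} \leq m_1 \frac{c+3}{4},
\end{equation}
whence $\frac{\triangle f}{f} = \frac{m_1(c+3)}{4}$ at every point of $M$. Since $f : B\mapsto \mathbb{R}^+$ is strictly positive, this is equivalent to $\triangle f = \frac{m_1(c+3)}{4}\,f$, i.e., $f$ is an eigen-function with eigenvalue $\frac{m_1(c+3)}{4}$. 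This contradicts the assumption that either $f$ is not an eigen-function or its eigenvalue is not equal to $\frac{m_1(c+3)}{4}$. Therefore no such submanifold exists.
\end{proof}
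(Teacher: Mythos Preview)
Your proposal is correct and matches the paper's approach: the paper gives no explicit proof for this Lemma (nor for any of the parallel ``nonexistence'' lemmas), since each is just the contrapositive reformulation of the Corollary/Remark immediately preceding it. Your argument by contradiction, invoking the two-sided equality from that Corollary and using $f>0$ to pass to $\triangle f = \frac{m_1(c+3)}{4}f$, is precisely what the author intends.
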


\begin{corollary}
Let $(M, g) = (B\times_f F, g_B + f^2 g_F)$ be a warped product
manifold and $(\overline{M}, \overline{g}) = (\overline{M}(c), \phi,
\xi, \eta, \overline{g})$ a Sasakian space form of constant
$\phi$-sectional curvature $c$. Let $\psi : (M, g)\mapsto
(\overline{M}, \overline{g})$ be an isometric immersion. Assume that
the manifold $(M, g)$ is a 2-dimensional totally geodesic
 submanifold of $(\overline{M}, \overline{g})$ with $\xi\in
\Gamma(TM)$.

Then we get
$$
m_1 \cdot 1 \leq \frac{\triangle f}{f} \leq m_1 \cdot 1
$$
\end{corollary}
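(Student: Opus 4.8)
The plan is to specialize Lemma~\ref{lem:l} to the Sasakian setting using the curvature formula \eqref{rel:3-14} (equivalently the sectional-curvature expression \eqref{rel:3-15}) under the hypothesis that $(M,g)$ is a $2$-dimensional totally geodesic submanifold with $\xi\in\Gamma(TM)$. First I would observe that since $(M,g)$ is totally geodesic, the second fundamental form $h$ vanishes identically, so $H=0$ and $\|h\|^2=0$; hence the inequality \eqref{rel:3-l} collapses to
\begin{equation*}
m_1\,(\inf\overline{K}) \leq \frac{\triangle f}{f} \leq m_1\,(\sup\overline{K}),
\end{equation*}
where now $\inf\overline{K}$ and $\sup\overline{K}$ are taken over the (single, at each point) $2$-plane $T_pM$, since $\dim M=2$. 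So the whole problem reduces to computing $\overline{K}(T_pM)$ exactly.

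Next I would compute $\overline{K}(X\wedge Y)$ for an orthonormal basis $\{X,Y\}$ of $T_pM$ using \eqref{rel:3-15}. The key point is the hypothesis $\xi\in\Gamma(TM)$: this means $\xi\in T_pM$ for each $p\in M$, and since $\dim T_pM=2$ we have $\xi\in\mathrm{Span}(X,Y)$. By the first case examined in the proof of Theorem~\ref{thm:4}, when $\xi\in\mathrm{Span}(X,Y)$ we get $-\eta(Y)^2-\eta(X)^2+3\overline{g}(\phi X,Y)^2 = -1$. (Concretely: choose the orthonormal basis so that $X=\xi$; then $\eta(X)=1$, $\eta(Y)=\overline{g}(Y,\xi)=0$, and $\phi X=\phi\xi=0$, giving $-1-0+0=-1$.) Substituting into \eqref{rel:3-15} yields
\begin{equation*}
\overline{K}(T_pM) = \frac{c+3}{4} + \frac{c-1}{4}\cdot(-1) = \frac{c+3-(c-1)}{4} = 1.
\end{equation*}
Therefore $\inf\overline{K} = \sup\overline{K} = 1$ on $M$, and the collapsed inequality gives exactly $m_1\cdot 1 \leq \frac{\triangle f}{f} \leq m_1\cdot 1$, which is the claim. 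So the proof is a two-line invocation: "Since $h\equiv 0$ we have $H=0$ and $\|h\|^2=0$; since $\dim M=2$ and $\xi\in\Gamma(TM)$, \eqref{rel:3-15} gives $\overline{K}=1$ identically; apply Lemma~\ref{lem:l}."

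I do not expect any genuine obstacle here — it is a direct corollary, entirely parallel to the earlier corollaries after Theorems~\ref{thm:l}, \ref{thm:2}, \ref{thm:3}. The only mild subtlety worth stating carefully is \emph{why} $\xi\in\mathrm{Span}(X,Y)$ forces the bracketed quantity to equal $-1$ rather than merely to lie in $[-1,3]$: it is the dimension constraint $\dim M=2$ that pins $\xi$ into the plane and, combined with $|\xi|=1$, makes the value exact. One should also note that $\phi$-invariance of $TM$ is \emph{not} assumed (unlike the complex case), but it is automatic: if $\xi\in T_pM$ and $X\perp\xi$ spans the rest of $T_pM$, then $\phi X$ is orthogonal to $\xi$ and to $X$, so either $\phi X=0$ or $\phi X\notin T_pM$; in the Sasakian case $\phi X\neq 0$ for $X\perp\xi$, so in fact $T_pM$ is \emph{not} $\phi$-invariant — but this plays no role, since all we need is the value of $\overline{K}$ on the single $2$-plane, which \eqref{rel:3-15} already delivers.
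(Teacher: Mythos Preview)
Your proposal is correct and follows exactly the approach the paper takes (implicitly, since the paper does not write out a separate proof for this corollary): specialize Lemma~\ref{lem:l} with $h\equiv 0$, and use \eqref{rel:3-15} together with the observation from the proof of Theorem~\ref{thm:4} that $\xi\in\mathrm{Span}(X,Y)$ forces the bracketed term to equal $-1$, so $\overline{K}\equiv 1$. Your explicit computation with $X=\xi$ and your final paragraph on $\phi$-invariance are correct but unnecessary; the paper would simply write ``By Lemma~\ref{lem:l} and \eqref{rel:3-15}, we obtain the result.''
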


\begin{lemma}
Let $(M, g) = (B\times_f F, g_B + f^2 g_F)$ be a warped product
manifold and $(\overline{M}, \overline{g}) = (\overline{M}(c), \phi,
\xi, \eta, \overline{g})$ a Sasakian space form of constant
$\phi$-sectional curvature $c$. Let $\psi : (M, g)\mapsto
(\overline{M}, \overline{g})$ be an isometric immersion.

There does not exist a 2-dimensional totally geodesic
 submanifold $(M, g)$ of $(\overline{M}, \overline{g})$ with $\xi\in
\Gamma(TM)$ such that either the warping function $f$ is not an
eigen-function or the eigenvalue of $f$ is not equal to $m_1$.
\end{lemma}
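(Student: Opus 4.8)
The plan is to deduce the statement directly from the Corollary that immediately precedes it, which already records the sharp pinching in this geometric situation. Let $(M,g) = (B\times_f F, g_B + f^2 g_F)$ be \emph{any} $2$-dimensional totally geodesic submanifold of $(\overline{M},\overline{g})$ with $\xi \in \Gamma(TM)$. First I would dispatch the dimension bookkeeping: since $\dim M = m = m_1 + m_2$ with $m_1 = \dim B \ge 1$ and $m_2 = \dim F \ge 1$, the hypothesis $m = 2$ forces $m_1 = m_2 = 1$; in particular $m_2 \neq 0$, so Lemma~\ref{lem:l} is applicable, and the candidate eigenvalue $m_1 = 1$ is meaningful.

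Next I would apply the preceding Corollary. Under exactly these hypotheses it gives
\[
m_1\cdot 1 \;\le\; \frac{\triangle f}{f} \;\le\; m_1\cdot 1,
\]
so $\frac{\triangle f}{f} \equiv m_1$ on $M$. Because $f : B \to \mathbb{R}^+$ is strictly positive, this is equivalent to $\triangle f = m_1 f$; that is, $f$ is an eigen-function of the Laplacian with eigenvalue $m_1$. Since this conclusion holds for every submanifold of the prescribed type, no $2$-dimensional totally geodesic submanifold $(M,g)$ with $\xi \in \Gamma(TM)$ can have $f$ fail to be an eigen-function, nor can it have an eigenvalue different from $m_1$. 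Equivalently, one may phrase the argument by contradiction: assume such a bad submanifold exists and derive $\frac{\triangle f}{f} = m_1$ from the Corollary as above, contradicting the assumption.

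For self-containedness I would recall in one line why the Corollary's pinching holds here: total geodesy gives $h \equiv 0$, hence $H \equiv 0$, so the bracketed terms in Lemma~\ref{lem:l} collapse to $m_1 \inf\overline{K}$ and $m_1 \sup\overline{K}$; and since $M$ is $2$-dimensional, the only tangent plane at each $p \in M$ is $T_pM$ itself, which contains $\xi$, so substituting $\xi \in \mathrm{Span}(X,Y)$ into (\ref{rel:3-15}) gives $\overline{K}(X\wedge Y) = \frac{c+3}{4} - \frac{c-1}{4} = 1$ identically, whence $\inf\overline{K} = \sup\overline{K} = 1$ along $M$.

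There is essentially no analytic obstacle; the entire content is the equality case of Lemma~\ref{lem:l} combined with the positivity of $f$. The only point warranting a second glance is the curvature computation identifying the forced value of $\overline{K}$ on the unique tangent plane through $\xi$ — i.e.\ checking that the $\eta$- and $\phi$-terms in (\ref{rel:3-15}) conspire to give exactly $1$ — which is precisely what makes the lower and upper bounds in Lemma~\ref{lem:l} coincide and pins the eigenvalue to $m_1$.
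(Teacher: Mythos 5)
Your proposal is correct and follows essentially the same route as the paper: the lemma is just the contrapositive restatement of the preceding Corollary, which (as you recapitulate) follows from Lemma~\ref{lem:l} with $h\equiv 0$, $H\equiv 0$ and from (\ref{rel:3-15}) giving $\overline{K}(T_pM)=\frac{c+3}{4}-\frac{c-1}{4}=1$ on the unique tangent plane containing $\xi$, so that $\frac{\triangle f}{f}\equiv m_1$ and positivity of $f$ yields $\triangle f=m_1 f$. Nothing further is needed.
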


\begin{corollary}
Let $(M, g) = (B\times_f F, g_B + f^2 g_F)$ be a warped product
manifold and $(\overline{M}, \overline{g}) = (\overline{M}(c), \phi,
\xi, \eta, \overline{g})$ a Sasakian space form of constant
$\phi$-sectional curvature $c$. Let $\psi : (M, g)\mapsto
(\overline{M}, \overline{g})$ be an isometric immersion. Assume that
the manifold $(M, g)$ is a 2-dimensional totally geodesic
$\phi$-invariant submanifold of $(\overline{M}, \overline{g})$ with
$\xi\in \Gamma(TM^{\perp})$ (i.e., $\phi (TM) = TM$).

Then we have
$$
m_1 c \leq \frac{\triangle f}{f} \leq m_1 c
$$
\end{corollary}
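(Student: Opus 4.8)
The plan is to mimic the pattern used for the earlier corollaries (totally geodesic totally real and $2$-dimensional complex submanifolds), but now in the Sasakian setting with $\phi(TM)=TM$ and $\xi\in\Gamma(TM^\perp)$. First I would observe that since $M$ is $2$-dimensional and $\phi$-invariant, we may pick a local orthonormal frame $\{v_1,v_2\}$ of $TM$ with $v_2=\phi v_1$; because $\xi$ is normal to $M$, we have $\eta(v_1)=\eta(v_2)=0$. Plugging $X=v_1$, $Y=v_2=\phi v_1$ into the sectional curvature formula (\ref{rel:3-15}) gives $-\eta(v_2)^2-\eta(v_1)^2+3\overline{g}(\phi v_1,v_2)^2 = 0-0+3\cdot 1 = 3$, so the unique $2$-plane $T_pM$ has $\overline{K}(T_pM) = \frac{c+3}{4}+\frac{c-1}{4}\cdot 3 = c$.

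Next, since $\dim M = 2$, the only $2$-dimensional subspace of $T_pM$ is $T_pM$ itself, hence $(\inf\overline{K})(p) = (\sup\overline{K})(p) = c$ at every $p\in M$. This is exactly the degenerate situation handled in Lemma \ref{lem:l}: the infimum and supremum of the ambient sectional curvature over tangent $2$-planes coincide and equal $c$. Moreover the totally geodesic hypothesis forces $h\equiv 0$, so $H^2 = 0$ and $\|h\|^2 = 0$, and the lower bound in (\ref{rel:3-l}) collapses to $m_1\inf\overline{K} = m_1 c$ while the upper bound collapses to $m_1\sup\overline{K} = m_1 c$. Therefore $m_1 c \leq \frac{\triangle f}{f} \leq m_1 c$, which is the claimed conclusion.

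The only genuine point to check — and the step I would be most careful about — is the sign/range bookkeeping in (\ref{rel:3-15}): one must confirm that with $\xi$ normal and $Y=\phi X$ the bracketed term is exactly $3$ (not merely bounded by $3$), which in turn requires $\overline{g}(\phi v_1,\phi v_1) = \overline{g}(v_1,v_1) = 1$ and that $\phi v_1$ is indeed a unit vector orthogonal to $v_1$ — standard facts for a contact metric structure on the non-$\xi$ part of the tangent space. Everything else is a direct invocation of Lemma \ref{lem:l} with the totally geodesic condition, exactly parallel to the proof of the $2$-dimensional complex submanifold corollary in the complex space form case. Consequently a short proof suffices:

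\begin{proof}
Since $(M,g)$ is a $2$-dimensional $\phi$-invariant submanifold with $\xi\in\Gamma(TM^\perp)$, choose a local orthonormal frame $\{v_1, v_2 = \phi v_1\}$ of $TM$; then $\eta(v_1) = \eta(v_2) = 0$. By (\ref{rel:3-15}),
$$
\overline{K}(v_1\wedge v_2) = \frac{c+3}{4} + \frac{c-1}{4}\,(0 - 0 + 3) = c .
$$
As $\dim M = 2$, the only $2$-plane in $T_pM$ is $T_pM$ itself, so $(\inf\overline{K})(p) = (\sup\overline{K})(p) = c$ for all $p\in M$. Since $(M,g)$ is totally geodesic, $h\equiv 0$, hence $H^2 = 0$ and $\|h\|^2 = 0$. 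Substituting into (\ref{rel:3-l}) of Lemma \ref{lem:l} gives
$$
m_1 c \leq \frac{\triangle f}{f} \leq m_1 c ,
$$
which proves the corollary.
\end{proof}
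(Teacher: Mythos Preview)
Your proof is correct and follows essentially the same approach as the paper: the paper's pattern for these corollaries is simply ``By Lemma~\ref{lem:l} and (\ref{rel:3-15}), we obtain the result,'' and you have merely made explicit the computation that $\overline{K}(v_1\wedge \phi v_1) = \frac{c+3}{4} + \frac{c-1}{4}\cdot 3 = c$ together with the observation that in dimension~$2$ there is only one $2$-plane, so $\inf\overline{K} = \sup\overline{K} = c$. Nothing further is needed.
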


\begin{lemma}
Let $(M, g) = (B\times_f F, g_B + f^2 g_F)$ be a warped product
manifold and $(\overline{M}, \overline{g}) = (\overline{M}(c), \phi,
\xi, \eta, \overline{g})$ a Sasakian space form of constant
$\phi$-sectional curvature $c$. Let $\psi : (M, g)\mapsto
(\overline{M}, \overline{g})$ be an isometric immersion.

There does not exist a 2-dimensional totally geodesic
$\phi$-invariant submanifold $(M, g)$ of $(\overline{M},
\overline{g})$ with $\xi\in \Gamma(TM^{\perp})$ such that either the
warping function $f$ is not an eigen-function or the eigenvalue of
$f$ is not equal to $m_1c$.
\end{lemma}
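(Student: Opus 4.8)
The plan is to argue by contradiction, exactly as in the parallel ``nonexistence'' lemmas already proved for real, complex, and quaternionic space forms. Suppose, for contradiction, that there exists a $2$-dimensional totally geodesic $\phi$-invariant submanifold $(M,g) = (B\times_f F, g_B + f^2 g_F)$ of $(\overline{M}(c), \phi, \xi, \eta, \overline{g})$ with $\xi\in\Gamma(TM^{\perp})$, for which either $f$ is not an eigen-function or $f$ is an eigen-function but with eigenvalue not equal to $m_1 c$. The Corollary immediately preceding this Lemma asserts that, under precisely these hypotheses (totally geodesic, $2$-dimensional, $\phi$-invariant, $\xi$ normal), one has
\begin{equation*}
m_1 c \leq \frac{\triangle f}{f} \leq m_1 c,
\end{equation*}
i.e.\ $\triangle f = m_1 c\, f$ identically on $M$. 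This says precisely that $f$ is an eigen-function with eigenvalue $m_1 c$, which directly contradicts the assumption. Hence no such submanifold exists.

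In slightly more detail, first I would record that the hypotheses of the Lemma are identical to the hypotheses of the Corollary: $\dim M = 2$, $h\equiv 0$ (totally geodesic), $\phi(TM)=TM$, and $\xi\in\Gamma(TM^{\perp})$. Next I would invoke the Corollary to conclude $\triangle f/f = m_1 c$ pointwise, hence $\triangle f = m_1 c\, f$ on all of $M$. Then I would unwind the definition of ``eigen-function'': by definition $f$ is an eigen-function with eigenvalue $\lambda$ means $\triangle f = \lambda f$, so the identity just obtained shows $f$ is an eigen-function with eigenvalue exactly $m_1 c$. This is incompatible with the supposition that $f$ is not an eigen-function or that its eigenvalue differs from $m_1 c$, completing the contradiction.

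There is essentially no obstacle here: the Lemma is a formal restatement of the equality case furnished by the Corollary, and the only thing to be careful about is that the geometric hypotheses match up verbatim (in particular that the Corollary's conclusion $m_1 c \leq \triangle f/f \leq m_1 c$ genuinely forces the eigen-equation everywhere, not merely at a point, which it does since the bounds hold at every $p\in M$). If one wished to make the argument self-contained rather than citing the Corollary, one would instead go back to Lemma \ref{lem:l} together with the curvature identity (\ref{rel:3-15}): for a $\phi$-invariant plane $\mathrm{Span}(X,Y)$ with $\eta(X)=\eta(Y)=0$ and $Y=\phi X$ we get $\overline{K}(X\wedge Y) = \frac{c+3}{4} + \frac{c-1}{4}\cdot 3 = c$, and since $M$ is $2$-dimensional this single plane is the tangent plane at each point, so $\inf\overline{K} = \sup\overline{K} = c$ on $M$; feeding $h\equiv 0$ and $H=0$ into (\ref{rel:3-l}) then pins $\triangle f/f$ to $m_1 c$. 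Either route yields the stated nonexistence.
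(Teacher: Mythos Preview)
Your proposal is correct and matches the paper's approach exactly: the paper does not even supply a separate proof of this lemma, treating it as an immediate consequence of the preceding Corollary (which pins $\triangle f/f$ to $m_1 c$), and your contradiction argument spells out precisely that implicit reasoning.
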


\begin{theorem}\label{thm:5}
Let $(M, g) = (B\times_f F, g_B + f^2 g_F)$ be a warped product
manifold and $(\overline{M}, \overline{g}) = (\overline{M}(c), \phi,
\xi, \eta, \overline{g})$ a Kenmotsu space form of constant
$\phi$-sectional curvature $c$. Let $\psi : (M, g)\mapsto
(\overline{M}, \overline{g})$ be an isometric immersion. Then we
obtain
\begin{eqnarray}
\frac{m_1 m^2}{2(m-1)} H^2 - \frac{m_1}{2} || h ||^2 - m_1
 \leq
\frac{\triangle f}{f} \leq \frac{m^2}{4m_2} H^2 + m_1 c, & c\geq -1,   \label{rel:3-16} \\
\frac{m_1 m^2}{2(m-1)} H^2 - \frac{m_1}{2} || h ||^2 + m_1 c \leq
\frac{\triangle f}{f} \leq \frac{m^2}{4m_2} H^2 - m_1, & c< -1,
\label{rel:3-17}
\end{eqnarray}
where $m_1 = \dim B$ and $m_2 = \dim F$ with $m=m_1+m_2$.
\end{theorem}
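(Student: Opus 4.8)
The plan is to follow exactly the template established for the complex, quaternionic, and Sasakian space forms in Theorems \ref{thm:2}--\ref{thm:4}: recall the explicit expression for the curvature tensor $\overline{R}$ of a Kenmotsu space form, compute the sectional curvature $\overline{K}(X\wedge Y)$ for an orthonormal pair $X,Y$, pin down its infimum and supremum over all tangent $2$-planes, and then feed the resulting bounds for $\inf\overline{K}$ and $\sup\overline{K}$ into Lemma \ref{lem:l}. First I would quote from the literature (the standard reference on Kenmotsu space forms) that
\begin{eqnarray}
&&\overline{R}(X,Y)Z = \frac{c-3}{4}(\overline{g}(Y,Z)X - \overline{g}(X,Z)Y)  \nonumber   \\
&&+ \frac{c+1}{4} (\eta(X)\eta(Z)Y - \eta(Y)\eta(Z)X + \eta(Y)\overline{g}(X,Z)\xi - \eta(X)\overline{g}(Y,Z)\xi  \nonumber   \\
&&+ \overline{g}(\phi Y, Z)\phi X - \overline{g}(\phi X, Z)\phi Y - 2\overline{g}(\phi X, Y)\phi Z)  \nonumber
\end{eqnarray}
for $X,Y,Z\in \Gamma(T\overline{M})$, the only structural difference from the Sasakian case being the replacement of $\frac{c+3}{4}$ by $\frac{c-3}{4}$ and of $\frac{c-1}{4}$ by $\frac{c+1}{4}$.

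Next I would specialize to orthonormal $X,Y\in T_p\overline{M}$ and compute
\begin{equation}
\overline{K}(X\wedge Y) = \overline{R}(X,Y,Y,X) = \frac{c-3}{4} + \frac{c+1}{4}(-\eta(X)^2 - \eta(Y)^2 + 3\,\overline{g}(\phi X, Y)^2),  \nonumber
\end{equation}
in complete parallel with (\ref{rel:3-15}). The same two extremal configurations as in the Sasakian argument apply: if $\xi\in\mathrm{Span}(X,Y)$ then the bracketed quantity equals $-1$, giving $\overline{K} = \frac{c-3}{4} - \frac{c+1}{4} = -1$; if $Y=\phi X$ with $\eta(X)=0$ then the bracket equals $3$, giving $\overline{K} = \frac{c-3}{4} + 3\cdot\frac{c+1}{4} = c$. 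Since the bracketed expression always lies in $[-1,3]$, one concludes $-1 \le \overline{K}(X\wedge Y) \le c$ when $c\ge -1$, and $c \le \overline{K}(X\wedge Y) \le -1$ when $c < -1$. Hence $\inf\overline{K}$ and $\sup\overline{K}$ are $-1$ and $c$ (in the appropriate order), and substituting into (\ref{rel:3-l}) of Lemma \ref{lem:l} yields (\ref{rel:3-16}) and (\ref{rel:3-17}) respectively.

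There is really no serious obstacle here; the argument is a routine transcription. The one point requiring a little care is verifying that the bracketed quantity $-\eta(X)^2 - \eta(Y)^2 + 3\,\overline{g}(\phi X, Y)^2$ genuinely attains both endpoints $-1$ and $3$ and stays within that interval for every orthonormal pair — this is inherited verbatim from the Sasakian computation preceding (\ref{rel:3-15}), using that $|\overline{g}(\phi X,Y)|\le 1$, that $\phi\xi=0$, and the algebraic identity $\eta(X)^2+\eta(Y)^2+\overline{g}(\phi X,Y)^2 \le 1$ for orthonormal $X,Y$ (so $\xi$, having unit length, can have at most unit total projection onto the plane). Once that is in hand, the proof is immediate from Lemma \ref{lem:l}, and I would write it as a four-line proof mirroring the proof of Theorem \ref{thm:4}.
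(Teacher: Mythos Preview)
Your proposal is correct and follows essentially the same approach as the paper's own proof: both quote the Kenmotsu curvature tensor, compute $\overline{K}(X\wedge Y) = \frac{c-3}{4} + \frac{c+1}{4}(-\eta(X)^2 - \eta(Y)^2 + 3\overline{g}(\phi X,Y)^2)$, invoke the range $[-1,3]$ for the bracketed term already established in the Sasakian case, and substitute the resulting sectional-curvature bounds into Lemma~\ref{lem:l}. Your write-up is in fact slightly more detailed than the paper's (which simply says ``since $-1\le \cdots \le 3$'' and cites Lemma~\ref{lem:l}), but the argument is the same.
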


\begin{proof}
We know that the Riemannian curvature tensor $\overline{R}$ \cite{K}
of $(\overline{M}, \overline{g})$ is given by
\begin{eqnarray}
&&\overline{R}(X,Y)Z = \frac{c-3}{4}(\overline{g}(Y,Z)X - \overline{g}(X,Z)Y)  \label{rel:3-18}   \\
&&+ \frac{c+1}{4} (\eta(X)\eta(Z)Y - \eta(Y)\eta(Z)X +
\eta(Y)\overline{g}(X,Z)\xi - \eta(X)\overline{g}(Y,Z)\xi
\nonumber   \\
&&+ \overline{g}(\phi Y, Z)\phi X - \overline{g}(\phi X, Z)\phi Y -
2\overline{g}(\phi X, Y)\phi Z) \nonumber
\end{eqnarray}
for $X,Y,Z\in \Gamma(T\overline{M})$. Given orthonormal vectors
$X,Y\in T_p \overline{M}$, $p\in \overline{M}$, we have
\begin{equation}\label{rel:3-19}
\overline{K}(X\wedge Y) = \overline{R}(X,Y,Y,X) = \frac{c-3}{4} +
\frac{c+1}{4} (-\eta(Y)^2 - \eta(X)^2 + 3\overline{g}(\phi X, Y)^2).
\end{equation}
so that since  $-1\leq -\eta(Y)^2 - \eta(X)^2 + 3\overline{g}(\phi
X, Y)^2 \leq 3$, we get
\begin{eqnarray}
-1 \leq \overline{K}(X\wedge Y) \leq  c, & c\geq -1,   \nonumber \\
c \leq \overline{K}(X\wedge Y) \leq -1, & c< -1. \nonumber
\end{eqnarray}
From Lemma \ref{lem:l}, we obtain the result.
\end{proof}

\begin{corollary}
Let $(M, g) = (B\times_f F, g_B + f^2 g_F)$ be a warped product
manifold and $(\overline{M}, \overline{g}) = (\overline{M}(c), \phi,
\xi, \eta, \overline{g})$ a Kenmotsu space form of constant
$\phi$-sectional curvature $c$. Let $\psi : (M, g)\mapsto
(\overline{M}, \overline{g})$ be an isometric immersion. Assume that
the manifold $(M, g)$ is a 2-dimensional totally geodesic
 submanifold of $(\overline{M}, \overline{g})$ with $\xi\in
\Gamma(TM)$.

Then we get
$$
m_1 \cdot (-1) \leq \frac{\triangle f}{f} \leq m_1 \cdot (-1).
$$
\end{corollary}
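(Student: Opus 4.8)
The plan is to argue exactly as in the proofs of the earlier corollaries of this section, combining Lemma \ref{lem:l} with the sectional-curvature computation (\ref{rel:3-19}) for the Kenmotsu space form. Since $(M,g)$ is a totally geodesic submanifold of $(\overline{M},\overline{g})$, the second fundamental form $h$ vanishes identically, so $H=0$, $H^2=0$ and $||h||^2=0$; consequently the two sides of the estimate of Lemma \ref{lem:l} collapse to $m_1(\inf\overline{K})$ and $m_1(\sup\overline{K})$, and it remains only to evaluate $\overline{K}$ on the tangent planes of $M$.

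Because $\dim M=2$, at each point $p\in M$ the tangent space $T_pM$ is the unique $2$-plane $V\subset T_pM$, hence $(\inf\overline{K})(p)=(\sup\overline{K})(p)=\overline{K}(T_pM)$. To compute this value I would choose an orthonormal basis $\{X,Y\}$ of $T_pM$; since $\xi\in\Gamma(TM)$ and $\overline{g}(\xi,\xi)=1$, one may take $X=\xi_p$, and then $Y\perp\xi$. Using $\eta(\xi)=1$ and $\phi\xi=0$ — so that $\eta(X)=1$, $\eta(Y)=0$ and $\overline{g}(\phi X,Y)=0$ — formula (\ref{rel:3-19}) gives
$$
\overline{K}(X\wedge Y)=\frac{c-3}{4}+\frac{c+1}{4}\bigl(-1-0+0\bigr)=\frac{c-3}{4}-\frac{c+1}{4}=-1,
$$
independently of $p$ and of the choice of $Y$; equivalently, this is the case $\xi\in\text{Span}(X,Y)$ already recorded in the proof of Theorem \ref{thm:4}. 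Thus $\inf\overline{K}=\sup\overline{K}=-1$ on $M$.

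Substituting $H^2=0$, $||h||^2=0$ and $\inf\overline{K}=\sup\overline{K}=-1$ into Lemma \ref{lem:l} then yields $m_1\cdot(-1)\leq\frac{\triangle f}{f}\leq m_1\cdot(-1)$, which is the assertion. I do not expect any genuine obstacle: the only points needing care are the legitimacy of the choice $X=\xi_p$ — valid because $\xi_p\in T_pM$ by hypothesis and $\xi$ is a unit field, hence extends to an orthonormal basis of the $2$-dimensional space $T_pM$ — and the standard identity $\phi\xi=0$ for the almost contact structure, which forces the variable term in (\ref{rel:3-19}) to equal $-1$ and makes both bounds coincide.
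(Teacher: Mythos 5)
Your proof is correct and follows exactly the paper's intended argument: totally geodesic gives $H^2=||h||^2=0$, and since $\dim M=2$ with $\xi$ tangent, the only plane in $T_pM$ has $\overline{K}=-1$ by (\ref{rel:3-19}), so both bounds in Lemma \ref{lem:l} coincide at $-m_1$. Nothing further is needed.
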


\begin{lemma}
Let $(M, g) = (B\times_f F, g_B + f^2 g_F)$ be a warped product
manifold and $(\overline{M}, \overline{g}) = (\overline{M}(c), \phi,
\xi, \eta, \overline{g})$ a Kenmotsu space form of constant
$\phi$-sectional curvature $c$. Let $\psi : (M, g)\mapsto
(\overline{M}, \overline{g})$ be an isometric immersion.

There does not exist a 2-dimensional totally geodesic
 submanifold $(M, g)$ of $(\overline{M}, \overline{g})$ with $\xi\in
\Gamma(TM)$ such that either the warping function $f$ is not an
eigen-function or the eigenvalue of $f$ is not equal to $-m_1$.
\end{lemma}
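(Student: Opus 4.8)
The plan is to establish this by contradiction, reducing everything to Lemma \ref{lem:l} (equivalently, to the preceding Corollary). So I would begin by supposing, contrary to the assertion, that there exists a $2$-dimensional totally geodesic submanifold $(M,g)$ of the Kenmotsu space form $\overline{M}(c)$ with $\xi\in\Gamma(TM)$ for which the warping function $f$ is either not an eigen-function of $\triangle$ or is an eigen-function with eigenvalue different from $-m_1$. The aim is to show that in fact $\triangle f=-m_1 f$ on $M$, which rules out both possibilities simultaneously.

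The first substantive step is to evaluate the ambient sectional curvature that appears in Lemma \ref{lem:l}. Since $\dim M=2$, at each $p\in M$ the only $2$-plane in $T_pM$ is $T_pM$ itself, and by hypothesis it contains $\xi_p$. I would pick an orthonormal basis $\{X,Y\}$ of $T_pM$ with $X=\xi_p$; then $\eta(X)=1$, $\eta(Y)=0$ and $\phi X=\phi\xi=0$, so the bracketed term in (\ref{rel:3-19}) equals $-\eta(Y)^2-\eta(X)^2+3\overline{g}(\phi X,Y)^2=-1$. Substituting into (\ref{rel:3-19}) gives $\overline{K}(T_pM)=\frac{c-3}{4}-\frac{c+1}{4}=-1$, so that $(\inf\overline{K})(p)=(\sup\overline{K})(p)=-1$ for every $p\in M$.

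Next I would invoke Lemma \ref{lem:l}. Total geodesy forces $h\equiv 0$, hence $H\equiv 0$, so the $H^2$ and $\|h\|^2$ terms in the estimate drop out and the two-sided inequality collapses to $m_1\cdot(-1)\le \frac{\triangle f}{f}\le m_1\cdot(-1)$, i.e. $\frac{\triangle f}{f}=-m_1$, equivalently $\triangle f=-m_1 f$. Thus $f$ is an eigen-function of $\triangle$ with eigenvalue exactly $-m_1$, contradicting the assumption on both counts and finishing the argument. (One could also phrase the last two steps as a direct appeal to the preceding Corollary, whose conclusion is precisely this squeeze.)

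The only point that needs genuine care --- and hence the closest thing to a main obstacle --- is the curvature computation: one must check that the quantity $-\eta(Y)^2-\eta(X)^2+3\overline{g}(\phi X,Y)^2$ takes its extreme value $-1$ exactly because the plane $T_pM$ contains the Reeb vector $\xi$, which is what makes $\inf\overline{K}$ and $\sup\overline{K}$ agree. Once that equality is secured, the vanishing of the second fundamental form renders the squeeze in Lemma \ref{lem:l} automatic, and no further estimates are required.
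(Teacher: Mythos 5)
Your argument is correct and follows essentially the same route as the paper: it is exactly the combination of Lemma \ref{lem:l} with the curvature formula (\ref{rel:3-19}), specialized to the plane $T_pM$ containing $\xi$ (where the bracketed term equals $-1$, so $\overline{K}=-1$), together with $h\equiv 0$ collapsing the two-sided estimate to $\triangle f=-m_1 f$ --- which is precisely the content of the preceding Corollary that the paper relies on. No gaps.
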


\begin{corollary}
Let $(M, g) = (B\times_f F, g_B + f^2 g_F)$ be a warped product
manifold and $(\overline{M}, \overline{g}) = (\overline{M}(c), \phi,
\xi, \eta, \overline{g})$ a Kenmotsu space form of constant
$\phi$-sectional curvature $c$. Let $\psi : (M, g)\mapsto
(\overline{M}, \overline{g})$ be an isometric immersion. Assume that
the manifold $(M, g)$ is a 2-dimensional totally geodesic
$\phi$-invariant submanifold of $(\overline{M}, \overline{g})$ with
$\xi\in \Gamma(TM^{\perp})$ (i.e., $\phi (TM) = TM$).

Then we get
$$
m_1 c \leq \frac{\triangle f}{f} \leq m_1 c.
$$
\end{corollary}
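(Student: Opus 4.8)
The plan is to argue exactly as in the proof of Theorem \ref{thm:5}, specializing the general curvature inequality of Lemma \ref{lem:l} to the sectional curvatures that survive when $(M,g)$ sits inside $\overline{M}(c)$ as a $2$-dimensional totally geodesic $\phi$-invariant submanifold with $\xi \in \Gamma(TM^{\perp})$. First I would recall the Kenmotsu curvature formula (\ref{rel:3-18}) and the resulting expression (\ref{rel:3-19}) for $\overline{K}(X\wedge Y)$ on orthonormal $X,Y$. The key point is that since $\dim M = 2$ and $M$ is totally geodesic, $TM$ contains only one plane $V$ at each point, namely $T_pM$ itself, so $(\inf\overline{K})(p) = (\sup\overline{K})(p) = \overline{K}(T_pM)$; there is no genuine infimum/supremum to estimate, only a single value to compute.

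Next I would compute that single value. Because $M$ is $\phi$-invariant, if $\{X, Y\}$ is an orthonormal basis of $T_pM$ we may take $Y = \phi X$ (after normalizing, using that $\phi$ is an isometry on the contact distribution); and because $\xi \in \Gamma(TM^{\perp})$, we have $\eta(X) = \overline{g}(X,\xi) = 0$ and likewise $\eta(Y) = 0$, while $\overline{g}(\phi X, Y)^2 = |\phi X|^4 = 1$. Plugging $-\eta(Y)^2 - \eta(X)^2 + 3\overline{g}(\phi X, Y)^2 = 3$ into (\ref{rel:3-19}) gives $\overline{K}(T_pM) = \frac{c-3}{4} + \frac{c+1}{4}\cdot 3 = c$. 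Hence $\inf\overline{K} = \sup\overline{K} = c$ at every point.

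Then I would invoke Lemma \ref{lem:l} with this evaluation together with the totally geodesic hypothesis $h \equiv 0$ (so $H = 0$ and $\|h\|^2 = 0$). The left inequality of (\ref{rel:3-l}) becomes $m_1 c \le \frac{\triangle f}{f}$ and the right becomes $\frac{\triangle f}{f} \le m_1 c$, which is precisely the claimed sandwich. This mirrors the earlier corollaries (complex, quaternionic, Sasakian cases) verbatim, so I would simply write: by Lemma \ref{lem:l} and (\ref{rel:3-19}) the result follows.

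The only mildly delicate point — and the one I would be most careful to state cleanly — is the identity $\overline{K}(T_pM) = c$, i.e. verifying that $\phi$-invariance of $TM$ forces the surviving plane to be a $\phi$-holomorphic plane orthogonal to $\xi$, so that the correction term in (\ref{rel:3-19}) attains its maximal value $3$ rather than something in between. Everything else is an immediate substitution into Lemma \ref{lem:l}, exactly as in the proof of the analogous complex-space-form corollary; there is no real obstacle beyond bookkeeping.
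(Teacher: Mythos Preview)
Your proposal is correct and follows essentially the same approach the paper uses throughout: the paper leaves this particular corollary without an explicit proof, but the established pattern (as in the proofs of the analogous corollaries for the complex, quaternionic, and Sasakian cases) is precisely ``By Lemma \ref{lem:l} and (\ref{rel:3-19}), we obtain the result,'' which is exactly what you carry out in detail. Your explicit verification that $\overline{K}(T_pM)=c$ via $-\eta(X)^2-\eta(Y)^2+3\overline{g}(\phi X,Y)^2=3$ is the computation the paper expects the reader to supply.
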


\begin{lemma}
Let $(M, g) = (B\times_f F, g_B + f^2 g_F)$ be a warped product
manifold and $(\overline{M}, \overline{g}) = (\overline{M}(c), \phi,
\xi, \eta, \overline{g})$ a Kenmotsu space form of constant
$\phi$-sectional curvature $c$. Let $\psi : (M, g)\mapsto
(\overline{M}, \overline{g})$ be an isometric immersion.

There does not exist a 2-dimensional totally geodesic
$\phi$-invariant submanifold $(M, g)$ of $(\overline{M},
\overline{g})$ with $\xi\in \Gamma(TM^{\perp})$ such that either the
warping function $f$ is not an eigen-function or the eigenvalue of
$f$ is not equal to $m_1 c$.
\end{lemma}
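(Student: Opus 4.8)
The plan is to reduce this non-existence statement to the equality analysis already carried out in the preceding corollary. Suppose, for contradiction, that a $2$-dimensional totally geodesic $\phi$-invariant submanifold $(M,g)$ of $(\overline{M},\overline{g})$ with $\xi\in\Gamma(TM^{\perp})$ exists such that either $f$ is not an eigen-function of $\triangle$ or $\triangle f = \lambda f$ for some constant $\lambda\neq m_1 c$. Since $M$ is totally geodesic, the second fundamental form $h$ vanishes identically, so $H=0$ and $\|h\|^2=0$ in \eqref{rel:3-16}--\eqref{rel:3-17}; thus the only surviving terms in the bounds from Theorem \ref{thm:5} come from the sectional curvature contribution.

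Next I would compute $\overline{K}(X\wedge Y)$ for an orthonormal frame $\{X,Y\}$ of $T_pM$. Because $M$ is $\phi$-invariant of dimension $2$ and $\xi$ is normal to $M$, we have $\eta(X)=\eta(Y)=0$ at every point of $M$, and we may take $Y=\phi X$ (up to sign), so $\overline{g}(\phi X,Y)^2 = 1$. Substituting into \eqref{rel:3-19} gives $\overline{K}(X\wedge Y) = \frac{c-3}{4} + \frac{c+1}{4}\cdot 3 = c$ pointwise on $M$. Hence $\inf\overline{K} = \sup\overline{K} = c$ along $M$, and Lemma \ref{lem:l} (with $h\equiv 0$) collapses to
\[
m_1 c \;\leq\; \frac{\triangle f}{f} \;\leq\; m_1 c,
\]
i.e. $\triangle f = m_1 c\, f$ identically. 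This shows $f$ is an eigen-function with eigenvalue exactly $m_1 c$, contradicting the assumption, and the lemma follows.

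The only genuinely delicate point is verifying that the quantity $-\eta(Y)^2-\eta(X)^2+3\overline{g}(\phi X,Y)^2$ equals $3$ (its maximum) identically along $M$, rather than merely lying in $[-1,3]$ as in the general estimate; this is where the hypotheses "$\phi(TM)=TM$", "$\dim M = 2$", and "$\xi\in\Gamma(TM^{\perp})$" are all used together, and one should check that $\phi$ restricted to $T_pM$ is genuinely an almost complex structure there (so that $\{X,\phi X\}$ is indeed an orthonormal basis with $\overline{g}(\phi X,\phi X)=1$), which follows from $\phi^2 = -\mathrm{Id} + \eta\otimes\xi$ together with $\eta|_{TM}=0$. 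Everything else is a direct substitution into the curvature formula \eqref{rel:3-18}--\eqref{rel:3-19} and an appeal to Lemma \ref{lem:l}; no new machinery is required, and the argument parallels verbatim the Sasakian and complex-space-form cases treated earlier in this section.
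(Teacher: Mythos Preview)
Your proposal is correct and follows exactly the approach implicit in the paper: the lemma is stated without proof there because it is the immediate contrapositive of the preceding corollary, which already establishes $\frac{\triangle f}{f}=m_1 c$ under the same hypotheses via Lemma~\ref{lem:l} and the curvature formula~\eqref{rel:3-19}. Your added verification that $\phi|_{TM}$ is an almost complex structure (using $\phi^2=-\mathrm{Id}+\eta\otimes\xi$ and $\eta|_{TM}=0$) is a welcome clarification the paper omits.
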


\begin{corollary}
Let $(M, g) = (B\times_f F, g_B + f^2 g_F)$ be a warped product
manifold and $(\overline{M}, \overline{g}) = (\overline{M}(c), \phi,
\xi, \eta, \overline{g})$ a Kenmotsu space form of constant
$\phi$-sectional curvature $c$. Let $\psi : (M, g)\mapsto
(\overline{M}, \overline{g})$ be an isometric immersion. Assume that
the manifold $(M, g)$ is a totally geodesic $\phi$-totally real
submanifold of $(\overline{M}, \overline{g})$ with $\xi\in
\Gamma(TM^{\perp})$ (i.e., $\phi (TM) \subset TM^{\perp}$).

Then we have
$$
m_1 \frac{c-3}{4} \leq \frac{\triangle f}{f} \leq m_1 \frac{c-3}{4}.
$$
\end{corollary}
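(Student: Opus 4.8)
The plan is to specialize Theorem \ref{thm:5} to the case where $(M,g)$ sits inside the Kenmotsu space form as a totally geodesic $\phi$-totally real submanifold with $\xi\in\Gamma(TM^\perp)$, exactly mirroring the argument used for the analogous corollary following Theorem \ref{thm:4}. First I would invoke the hypothesis of total geodesy: since $h\equiv 0$, the mean curvature vector $H$ vanishes, so $H^2=0$ and $\|h\|^2=0$, and the inequality (\ref{rel:3-l}) of Lemma \ref{lem:l} collapses to
\begin{equation}
m_1\inf\overline{K}\ \leq\ \frac{\triangle f}{f}\ \leq\ m_1\sup\overline{K},\nonumber
\end{equation}
where the infimum and supremum of $\overline{K}$ are taken over $2$-planes $V\subset T_pM$. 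So the entire proof reduces to showing that for every $2$-plane $V\subset T_pM$ one has $\overline{K}(V)=\frac{c-3}{4}$, i.e. that the sectional curvature of $\overline{M}$ restricted to tangent planes of $M$ is the constant $\frac{c-3}{4}$.

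Next I would use formula (\ref{rel:3-19}) for the sectional curvature: for orthonormal $X,Y\in T_p\overline{M}$,
\begin{equation}
\overline{K}(X\wedge Y)=\frac{c-3}{4}+\frac{c+1}{4}\bigl(-\eta(Y)^2-\eta(X)^2+3\,\overline{g}(\phi X,Y)^2\bigr).\nonumber
\end{equation}
Here the key point is the geometric hypothesis. Because $\xi\in\Gamma(TM^\perp)$, for any $X\in T_pM$ we have $\eta(X)=\overline{g}(X,\xi)=0$; and because $\phi(TM)\subset TM^\perp$ (the $\phi$-totally real condition), for any $X,Y\in T_pM$ the vector $\phi X$ is normal to $M$ while $Y$ is tangent, so $\overline{g}(\phi X,Y)=0$. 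Hence for an orthonormal pair $X,Y$ spanning a tangent plane $V$, the bracketed term vanishes identically and $\overline{K}(V)=\frac{c-3}{4}$.

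Therefore $\inf\overline{K}=\sup\overline{K}=\frac{c-3}{4}$ when restricted to planes in $T_pM$, and substituting into the collapsed Lemma \ref{lem:l} inequality gives $m_1\frac{c-3}{4}\leq\frac{\triangle f}{f}\leq m_1\frac{c-3}{4}$, which is the claim. I do not anticipate a genuine obstacle here: the only thing to be careful about is that Lemma \ref{lem:l} uses $\inf$ and $\sup$ of $\overline{K}$ over $2$-planes tangent to $M$ (not over all of $T_p\overline{M}$), so that the normality of $\xi$ and of $\phi(TM)$ can be applied; once that is noted, the argument is a direct substitution exactly as in the proof of the corresponding Sasakian corollary.
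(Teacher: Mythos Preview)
Your proposal is correct and follows exactly the approach the paper uses for the analogous corollaries: invoke Lemma \ref{lem:l}, use total geodesy to kill the $H^2$ and $\|h\|^2$ terms, and then apply formula (\ref{rel:3-19}) together with the hypotheses $\xi\in\Gamma(TM^\perp)$ and $\phi(TM)\subset TM^\perp$ to see that $\overline{K}$ is constantly $\frac{c-3}{4}$ on tangent $2$-planes of $M$. The paper itself omits an explicit proof here, but the parallel Sasakian corollary is proved precisely this way (``By Lemma \ref{lem:l} and (\ref{rel:3-15})''), so your argument is the intended one spelled out in full.
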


\begin{lemma}
Let $(M, g) = (B\times_f F, g_B + f^2 g_F)$ be a warped product
manifold and $(\overline{M}, \overline{g}) = (\overline{M}(c), \phi,
\xi, \eta, \overline{g})$ a Kenmotsu space form of constant
$\phi$-sectional curvature $c$. Let $\psi : (M, g)\mapsto
(\overline{M}, \overline{g})$ be an isometric immersion.

There does not exist a totally geodesic $\phi$-totally real
submanifold $(M, g)$ of $(\overline{M}, \overline{g})$ with $\xi\in
\Gamma(TM^{\perp})$ such that either the warping function $f$ is not
an eigen-function or the eigenvalue of $f$ is not equal to
$\frac{m_1(c-3)}{4}$.
\end{lemma}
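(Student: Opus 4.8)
The plan is to argue by contradiction, exactly as in the parallel non-existence lemmas for the real, complex, quaternionic, and Sasakian space forms, so that the Kenmotsu case merely records the analogue for the $\phi$-totally real situation with $\xi \in \Gamma(TM^\perp)$. Suppose, contrary to the claim, that there \emph{is} a totally geodesic $\phi$-totally real submanifold $(M,g) = (B\times_f F, g_B + f^2 g_F)$ of the Kenmotsu space form $(\overline{M}(c),\phi,\xi,\eta,\overline{g})$ with $\xi\in\Gamma(TM^\perp)$, for which $f$ either fails to be an eigen-function of the Laplacian or is an eigen-function with eigenvalue $\lambda \neq \frac{m_1(c-3)}{4}$. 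The first step is to invoke the Corollary immediately preceding this lemma: under precisely these hypotheses (totally geodesic, $\phi(TM)\subset TM^\perp$, $\xi\in\Gamma(TM^\perp)$) one has $m_1\frac{c-3}{4} \leq \frac{\triangle f}{f} \leq m_1\frac{c-3}{4}$, hence $\frac{\triangle f}{f} = m_1\frac{c-3}{4}$ identically on $M$.

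The second step is to read off what this identity means. Writing it as $\triangle f = \frac{m_1(c-3)}{4}\, f$ on all of $M$, we see that $f$ is by definition an eigen-function of the Laplacian $\triangle$ with eigenvalue exactly $\frac{m_1(c-3)}{4}$. (Here one should note $f > 0$ since $f: B \to \mathbb{R}^+$, so dividing by $f$ and multiplying back are legitimate and the eigenvalue is well-defined.) This flatly contradicts the standing assumption that either $f$ is not an eigen-function or its eigenvalue differs from $\frac{m_1(c-3)}{4}$. Therefore no such submanifold exists, which is the assertion.

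The only genuinely delicate point — and it is the same point underlying every non-existence lemma in this section — is verifying that the equality in the preceding Corollary really does force the pointwise identity $\frac{\triangle f}{f} \equiv m_1\frac{c-3}{4}$ rather than merely a chain of inequalities that happen to coincide; but this is automatic, because the two-sided bound $m_1\frac{c-3}{4}\leq \frac{\triangle f}{f}\leq m_1\frac{c-3}{4}$ leaves no slack. One does need to have checked, back in the Corollary's proof, that for a totally geodesic $\phi$-totally real submanifold with $\xi\in\Gamma(TM^\perp)$ every tangent $2$-plane $V\subset T_pM$ satisfies $\overline{K}(V) = \frac{c-3}{4}$: indeed, for orthonormal $X,Y\in T_pM$ we have $\eta(X)=\eta(Y)=0$ because $\xi\perp TM$, and $\overline{g}(\phi X,Y)=0$ because $\phi X\in TM^\perp$ while $Y\in TM$, so the bracketed term $-\eta(Y)^2-\eta(X)^2+3\overline{g}(\phi X,Y)^2$ in \eqref{rel:3-19} vanishes and $\overline{K}(X\wedge Y)=\frac{c-3}{4}$; thus $\inf\overline{K}=\sup\overline{K}=\frac{c-3}{4}$, and since $H=0$ and $\|h\|^2=0$ by total geodesy, Lemma \ref{lem:l} collapses to the stated equality. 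With that in hand the contradiction argument above completes the proof.
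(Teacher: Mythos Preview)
Your proof is correct and is precisely the argument the paper intends: the lemma is stated without proof because it is the immediate contrapositive of the preceding Corollary (which pins $\frac{\triangle f}{f}$ to the constant $m_1\frac{c-3}{4}$), and your write-up makes that implication explicit, including the verification via \eqref{rel:3-19} that $\inf\overline{K}=\sup\overline{K}=\frac{c-3}{4}$ on such a submanifold.
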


\section{Hermitian symmetric spaces}\label{esti2}

\begin{theorem}\label{thm:6}
Let $(M, g) = (B\times_f F, g_B + f^2 g_F)$ be a warped product
manifold and $(\overline{M}, \overline{g}) = G_2 (\mathbb{C}^{m+2})
= SU_{m+2}/S(U_m U_2)$ the complex two-plane Grassmannian. Let $\psi
: (M, g)\mapsto (\overline{M}, \overline{g})$ be an isometric
immersion. Then we have
\begin{equation}\label{rel:3-20}
\frac{m_1 m^2}{2(m-1)} H^2 - \frac{m_1}{2} || h ||^2 - m_1  \leq
\frac{\triangle f}{f} \leq \frac{m^2}{4m_2} H^2 + 8m_1,
\end{equation}
where $m_1 = \dim B$ and $m_2 = \dim F$ with $m=m_1+m_2$.
\end{theorem}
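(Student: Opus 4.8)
The plan is to run the same scheme as in the proofs of Theorems~\ref{thm:l}--\ref{thm:5}: identify the curvature operator of the target, extract sharp pointwise bounds for its sectional curvature, and substitute these into \eqref{rel:3-l} of Lemma~\ref{lem:l}. So first I would write down the explicit Riemannian curvature tensor of $G_2(\mathbb{C}^{m+2})$. Besides its Hermitian structure $J$, this space carries a quaternionic K\"{a}hler structure locally spanned by an orthonormal triple $\{J_1,J_2,J_3\}$ obeying the quaternion relations $J_1J_2=J_3=-J_2J_1$ (and cyclic) and commuting with $J$; its curvature tensor is a known combination of a constant-curvature-$1$ term, ``$J$-terms'' of complex-space-form type as in \eqref{rel:3-6}, ``$J_\nu$-terms'' of quaternionic-space-form type as in \eqref{rel:3-10}, and extra cross terms built from the symmetric involutions $J_\nu J$.

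For unit orthogonal $X,Y\in T_p\overline M$ I would then contract this tensor to obtain $\overline K(X\wedge Y)=\overline R(X,Y,Y,X)$, exactly as in the derivations of \eqref{rel:3-7}, \eqref{rel:3-11}, \eqref{rel:3-15}. Since $J$ and the $J_\nu$ are skew-symmetric and the $J_\nu J$ are symmetric, the antisymmetric pieces cancel and one is left with an expression of the form
\[
\overline K(X\wedge Y)=1+3\,\overline g(JX,Y)^2+3\sum_{\nu=1}^{3}\overline g(J_\nu X,Y)^2-\sum_{\nu=1}^{3}\overline g(J_\nu JX,Y)^2+\sum_{\nu=1}^{3}\overline g(J_\nu JY,Y)\,\overline g(J_\nu JX,X),
\]
in which the first two sums are nonnegative and the last two terms are controllable. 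The lower bound is then elementary: dropping $3\,\overline g(JX,Y)^2+3\sum_\nu\overline g(J_\nu X,Y)^2\geq 0$, and noting that $\{J_1JX,J_2JX,J_3JX\}$ is orthonormal so that Bessel's inequality gives $\sum_\nu\overline g(J_\nu JX,Y)^2\leq|Y|^2=1$ while Cauchy--Schwarz together with Bessel applied to $\{J_\nu JX\}$ and $\{J_\nu JY\}$ gives $\bigl|\sum_\nu\overline g(J_\nu JY,Y)\,\overline g(J_\nu JX,X)\bigr|\leq 1$, we get $\overline K(X\wedge Y)\geq 1-1-1=-1$, hence $(\inf\overline K)(p)\geq-1$ for every $p\in M$. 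For the upper bound one uses the full algebra of the pair $\bigl(J,\{J_\nu\}\bigr)$ to show $\overline K(X\wedge Y)\leq 8$, with equality precisely on the planes $\mathrm{span}\{X,JX\}$ for which $JX=\pm J_\nu X$ for some local $\nu$; this is the well-known fact that $G_2(\mathbb{C}^{m+2})$ has maximal sectional curvature $8$, so $(\sup\overline K)(p)\leq 8$.

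Finally, substituting $(\inf\overline K)(p)\geq-1$ and $(\sup\overline K)(p)\leq 8$ into \eqref{rel:3-l}, so that $m_1(\inf\overline K)\geq-m_1$ and $m_1(\sup\overline K)\leq 8m_1$, yields \eqref{rel:3-20}. The main obstacle is the sharp upper estimate $\overline K\leq 8$: unlike in the complex or quaternionic space form cases, the unit vectors $JX,J_1X,J_2X,J_3X$ need not be mutually orthogonal, so $\overline g(JX,Y)^2+\sum_\nu\overline g(J_\nu X,Y)^2$ is not automatically bounded by $1$, and one has to organize the contribution of $Y$ along the (at most eight-dimensional) subspace spanned by $X$ and its images under $J,J_1,J_2,J_3,J_1J,J_2J,J_3J$ and invoke the commutation and quaternion relations, or else quote the known pinching of $G_2(\mathbb{C}^{m+2})$ directly from the literature. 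Everything else is a routine specialization of Lemma~\ref{lem:l}.
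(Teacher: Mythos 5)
Your overall scheme is exactly the paper's: write the explicit curvature tensor of $G_2(\mathbb{C}^{m+2})$ (a constant-curvature-$1$ part, a $J$-part, a $J_\nu$-part, and the cross terms built from $J_\nu J$), contract it to
$\overline{K}(X\wedge Y)=1+3\,\overline{g}(JX,Y)^2+\sum_{\nu=1}^{3}\bigl(3\,\overline{g}(J_\nu X,Y)^2+\overline{g}(J_\nu JY,Y)\,\overline{g}(J_\nu JX,X)-\overline{g}(J_\nu JX,Y)^2\bigr)$,
which is precisely the paper's formula \eqref{rel:3-22}, and feed the resulting bounds on $\inf\overline{K}$ and $\sup\overline{K}$ into Lemma \ref{lem:l}. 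Your lower bound argument (drop the nonnegative terms, Bessel for $\sum_\nu\overline{g}(J_\nu JX,Y)^2\le 1$, Cauchy--Schwarz plus Bessel for the cross term) coincides with the paper's and is complete.

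The one real defect is that you declare the upper bound $\overline{K}\le 8$ to be the ``main obstacle'' and leave it unproved, proposing either an extra analysis of the span of $X,JX,J_\nu X,J_\nu JX$ or an appeal to the literature. Neither is needed: the very same termwise estimates you already used give it at once, namely $3\,\overline{g}(JX,Y)^2\le 3$, $3\sum_\nu\overline{g}(J_\nu X,Y)^2\le 3$ (orthonormality of $J_1X,J_2X,J_3X$), $\sum_\nu\overline{g}(J_\nu JY,Y)\,\overline{g}(J_\nu JX,X)\le 1$, and $-\sum_\nu\overline{g}(J_\nu JX,Y)^2\le 0$, so $\overline{K}(X\wedge Y)\le 1+3+3+1=8$; this is exactly how the paper argues. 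Your worry that $\overline{g}(JX,Y)^2+\sum_\nu\overline{g}(J_\nu X,Y)^2$ is not bounded by $1$ is beside the point: the theorem only claims the constant $8$, which comes from bounding each block separately, not from any joint bound (a joint bound would yield a stronger constant than the one asserted). Sharpness of $8$ (taking $Y=JX$ with $X$ singular, $JX=J_1X$) is only used later, in the paper's remark and the ensuing equality cases, not in the proof of the inequality itself. With that upper bound filled in as above, your proof matches the paper's.
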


\begin{proof}
We see that the Riemannian curvature tensor $\overline{R}$ \cite{P2}
of $(\overline{M}, \overline{g})$ is given by
\begin{eqnarray}
&&\overline{R}(X,Y)Z = \overline{g}(Y,Z)X - \overline{g}(X,Z)Y
\label{rel:3-21}   \\
&&+ \overline{g}(JY, Z)JX - \overline{g}(JX, Z)JY -
2\overline{g}(JX, Y)JZ \nonumber  \\
&&+ \sum_{\alpha=1}^3
(\overline{g}(J_{\alpha} Y, Z)J_{\alpha} X - \overline{g}(J_{\alpha}
X, Z)J_{\alpha} Y - 2\overline{g}(J_{\alpha}
X, Y)J_{\alpha} Z) \nonumber  \\
&&+ \sum_{\alpha=1}^3 (\overline{g}(J_{\alpha} JY, Z)J_{\alpha} JX -
\overline{g}(J_{\alpha} JX, Z)J_{\alpha} JY) \nonumber
\end{eqnarray}
for $X,Y,Z\in \Gamma(T\overline{M})$. Given orthonormal vectors
$X,Y\in T_p \overline{M}$, $p\in \overline{M}$, we get
\begin{eqnarray}
&&\overline{K}(X\wedge Y) = \overline{R}(X,Y,Y,X) = 1 +
3\overline{g}(JX, Y)^2 \label{rel:3-22}   \\
&&+ \sum_{\alpha=1}^3 (3\overline{g}(J_{\alpha} X, Y)^2 +
\overline{g}(J_{\alpha} JY, Y) \overline{g}(J_{\alpha} JX, X) -
\overline{g}(J_{\alpha} JX, Y)^2).  \nonumber
\end{eqnarray}
With simple computations, we obtain

$\overline{g}(JX, Y)^2 \leq |JX|^2 |Y|^2 = 1,$

$\displaystyle{ \sum_{\alpha=1}^3 \overline{g}(J_{\alpha} X, Y)^2
\leq |Y|^2 = 1}$ (since $\{ J_1 X,J_2 X,J_3 X \}$ is orthonormal),

\begin{eqnarray}
&&|\sum_{\alpha=1}^3 \overline{g}(J_{\alpha} JY, Y)
\overline{g}(J_{\alpha} JX, X)| \leq \sqrt{\sum_{\alpha=1}^3
\overline{g}(J_{\alpha} JY, Y)^2} \cdot \sqrt{\sum_{\alpha=1}^3
\overline{g}(J_{\alpha} JX, X)^2}   \nonumber \\
&&\leq \sqrt{|Y|^2} \sqrt{|X|^2} = 1 \nonumber
\end{eqnarray}
(by Cauchy-Schwarz inequality and since $\{ J_1JY,J_2JY,J_3JY \}$
and $\{ J_1JX,J_2JX,J_3JX \}$ are orthonormal)

$\Rightarrow \ \displaystyle{ -1 \leq \sum_{\alpha=1}^3
\overline{g}(J_{\alpha} JY, Y) \overline{g}(J_{\alpha} JX, X) \leq
1}$,

$\displaystyle{ \sum_{\alpha=1}^3 \overline{g}(J_{\alpha} JX, Y)^2
\leq |Y|^2 = 1}$ (since $\{ J_1JX,J_2JX,J_3JX \}$ is orthonormal).

By using the above relations, we obtain
\begin{equation}\label{rel:3-23}
\overline{K}(X\wedge Y) \leq 1 + 3 \cdot 1 + 3 \cdot 1 + 1 = 8.
\end{equation}
On the other hand, by the above relations, we have
\begin{eqnarray}
&&\overline{K}(X\wedge Y) \geq 1 + \sum_{\alpha=1}^3
(\overline{g}(J_{\alpha} JY, Y) \overline{g}(J_{\alpha} JX, X) -
\overline{g}(J_{\alpha} JX, Y)^2)  \label{rel:3-24}  \\
&&\geq 1 - 1 - 1 = -1.   \nonumber
\end{eqnarray}
From Lemma \ref{lem:l}, by using (\ref{rel:3-23}) and
(\ref{rel:3-24}), the result follows.
\end{proof}

\begin{remark}\label{rmk:6}
Choose orthonormal vectors $X,Y\in T_p \overline{M}$, $p\in
\overline{M}$ such that $Y = JX$ and $X$ is a singular vector. i.e.,
conveniently, $JX = J_1X$ (See \cite{BS1}). From (\ref{rel:3-22}),
we get
$$
\overline{K}(X\wedge Y) = 1 + 3 + 3 + 1 + 0 = 8.
$$
So, the upper bound of the function $\overline{K}(X\wedge Y)$ is
rigid.
\end{remark}

\begin{corollary}
Let $(M, g) = (B\times_f F, g_B + f^2 g_F)$ be a warped product
manifold and $(\overline{M}, \overline{g}) = G_2 (\mathbb{C}^{m+2})
= SU_{m+2}/S(U_m U_2)$ the complex two-plane Grassmannian. Let $\psi
: (M, g)\mapsto (\overline{M}, \overline{g})$ be an isometric
immersion. Assume that the manifold $(M, g)$ is a 2-dimensional
totally geodesic $J$-invariant submanifold of $(\overline{M},
\overline{g})$ with a singular vector field $X\in \Gamma(TM)$ (i.e.,
$J(TM) = TM$).

Then we get
$$
m_1 \cdot 8 \leq \frac{\triangle f}{f} \leq m_1 \cdot 8.
$$
\end{corollary}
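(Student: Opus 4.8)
The plan is to deduce the Corollary directly from Theorem \ref{thm:6} together with Remark \ref{rmk:6}, by observing that the totally geodesic hypothesis kills the second fundamental form and that the $J$-invariance together with the singular-vector condition pins down the sectional curvature. First I would note that since $(M,g)$ is totally geodesic in $(\overline{M}, \overline{g})$, the second fundamental form $h$ vanishes identically, hence $H = 0$ and $\|h\|^2 = 0$. Substituting into \eqref{rel:3-20}, the inequality chain in Theorem \ref{thm:6} collapses to
\begin{equation}\label{rel:cor-1}
- m_1 \leq \frac{\triangle f}{f} \leq 8 m_1. \nonumber
\end{equation}
This is not yet the desired conclusion; the gap between $-m_1$ and $8m_1$ must be closed using the extra geometric hypotheses, which is where the real content lies.

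Next I would reexamine the proof of Lemma \ref{lem:l} (i.e.\ Theorems 3.1 and 3.4 of \cite{P1}) to see that the quantities $\inf\overline{K}$ and $\sup\overline{K}$ appearing there are in fact evaluated on $2$-planes $V\subset T_pM$, not on all of $T_p\overline{M}$. Since $\dim M = 2$, at each point $p$ there is exactly one such plane, namely $T_pM$ itself, so $\inf\overline{K}(p) = \sup\overline{K}(p) = \overline{K}(T_pM)$. Thus the upper and lower bounds in \eqref{rel:3-l} coincide, giving
\begin{equation}\label{rel:cor-2}
\frac{m_1 m^2}{2(m-1)} H^2 - \frac{m_1}{2}\|h\|^2 + m_1 \overline{K}(T_pM) \;\leq\; \frac{\triangle f}{f} \;\leq\; \frac{m^2}{4m_2} H^2 + m_1 \overline{K}(T_pM). \nonumber
\end{equation}
With $H = 0$ and $\|h\|^2 = 0$ this forces $\frac{\triangle f}{f} = m_1 \overline{K}(T_pM)$.

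Finally I would compute $\overline{K}(T_pM)$ using the hypotheses. Pick a unit vector $X\in T_pM$; since $X$ is assumed singular and $TM$ is $J$-invariant, $JX\in T_pM$ as well, and $\{X, JX\}$ is an orthonormal basis of the $2$-plane $T_pM$. By Remark \ref{rmk:6} (taking $Y = JX$ with $X$ singular, so that conveniently $JX = J_1X$), formula \eqref{rel:3-22} evaluates to
\begin{equation}\label{rel:cor-3}
\overline{K}(X\wedge JX) = 1 + 3 + 3 + 1 + 0 = 8, \nonumber
\end{equation}
so $\overline{K}(T_pM) = 8$ at every point, and hence $\frac{\triangle f}{f} = 8 m_1$, which yields $m_1\cdot 8 \leq \frac{\triangle f}{f}\leq m_1\cdot 8$ as claimed.

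The main obstacle is the middle step: one must be sure that in the statement of Lemma \ref{lem:l} the infimum and supremum of $\overline{K}$ are taken over $2$-planes tangent to $M$ (so that in the $2$-dimensional case they collapse to a single value), rather than over all $2$-planes of $\overline{M}$; if only the coarser bounds $\inf\overline{K}\geq -1$ and $\sup\overline{K}\leq 8$ from Theorem \ref{thm:6} were available, the argument would not close. A secondary point requiring care is verifying that a singular vector $X$ with $JX = J_1X$ really does make the bracketed $\mathfrak{Sp}(1)$-terms in \eqref{rel:3-22} vanish, i.e.\ that $\sum_\alpha(\overline{g}(J_\alpha JX, X)^2 - \overline{g}(J_\alpha JJX, JX)\overline{g}(J_\alpha JX, X)) = 0$ and $\sum_\alpha\overline{g}(J_\alpha X, JX)^2 = 1$; this is exactly the content cited from \cite{BS1} in Remark \ref{rmk:6}, so it may be invoked directly.
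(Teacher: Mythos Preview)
Your proposal is correct and follows exactly the paper's approach: the paper's one-line proof cites Lemma~\ref{lem:l} and \eqref{rel:3-22}, which is precisely your argument (totally geodesic kills $h$, the $2$-dimensionality collapses $\inf\overline{K}=\sup\overline{K}$ to the single plane $T_pM$, and $J$-invariance plus singularity force $\overline{K}(T_pM)=8$ via Remark~\ref{rmk:6}). Your stated ``main obstacle'' is already resolved in the paper's preliminaries by definitions \eqref{rel:2-7} and \eqref{rel:2-8}, where $\inf\overline{K}$ and $\sup\overline{K}$ are explicitly taken over $2$-planes $V\subset T_pM$, so no re-examination of \cite{P1} is needed.
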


\begin{proof}
By Lemma \ref{lem:l} and (\ref{rel:3-22}), we obtain the result.
\end{proof}

\begin{lemma}
Let $(M, g) = (B\times_f F, g_B + f^2 g_F)$ be a warped product
manifold and $(\overline{M}, \overline{g}) = G_2 (\mathbb{C}^{m+2})
= SU_{m+2}/S(U_m U_2)$ the complex two-plane Grassmannian. Let $\psi
: (M, g)\mapsto (\overline{M}, \overline{g})$ be an isometric
immersion.

There does not exist a 2-dimensional totally geodesic $J$-invariant
submanifold $(M, g)$ of $(\overline{M}, \overline{g})$ with a
singular vector field $X\in \Gamma(TM)$ such that either the warping
function $f$ is not an eigen-function or the eigenvalue of $f$ is
not equal to $8m_1$.
\end{lemma}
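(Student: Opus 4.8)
The plan is to argue by contradiction in the style of the analogous non-existence lemmas appearing earlier in the paper (e.g. the lemmas following each equality corollary). Suppose such a $2$-dimensional totally geodesic $J$-invariant submanifold $(M,g)$ of $\overline{M} = G_2(\mathbb{C}^{m+2})$ exists, carrying a singular unit vector field $X \in \Gamma(TM)$, and suppose that either $f$ fails to be an eigen-function or its eigenvalue is not $8m_1$. The first step is to record what the geometric hypotheses force: since $M$ is totally geodesic, $h \equiv 0$, hence $H = 0$ and $\|h\|^2 = 0$, so the estimate \eqref{rel:3-20} of Theorem \ref{thm:6} collapses to
\begin{equation}\label{rel:3-25}
-m_1 \;\leq\; \frac{\triangle f}{f} \;\leq\; 8 m_1 . \nonumber
\end{equation}
This by itself is not sharp enough, so the key is to revisit the proof of Theorem \ref{thm:6} and Lemma \ref{lem:l} to see which curvature terms actually enter.

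The second step is the crucial sharpening of the upper estimate. In the proof of Lemma \ref{lem:l} (Theorem 3.4 of \cite{P1} and its proof), the term $m_1 \sup \overline{K}$ arises as a bound on a sum of sectional curvatures of planes tangent to $M$ of the form (vertical)$\wedge$(horizontal); under the present hypotheses these planes are spanned by $X$ and $JX$ up to the $J$-invariant splitting $TM = \mathrm{Span}(X, JX)$. Because $X$ is singular, Remark \ref{rmk:6} shows that for the plane $X \wedge JX$ one has exactly $\overline{K}(X \wedge JX) = 8$, i.e. the supremum $8$ is attained on precisely the planes that contribute to the upper bound. Consequently the upper inequality in \eqref{rel:3-20} becomes an equality: $\triangle f / f = 8 m_1$ identically on $M$. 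Symmetrically — and this is where I expect the main obstacle — I must also pin down the lower side: the lower bound in \eqref{rel:3-20} uses $\inf \overline{K}$ over the same family of planes, and I need that on a totally geodesic $J$-invariant surface through a singular direction the relevant infimum is likewise forced, so that the two-sided squeeze yields $\triangle f/f = 8m_1$ with no slack. Establishing that the lower-bound computation in \cite{P1} specializes correctly here (tracking exactly which planes appear and confirming their $\overline{K}$-values via \eqref{rel:3-22} with $Y = JX$, $X$ singular) is the delicate bookkeeping step.

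Granting the squeeze, the third step is immediate: $\triangle f/f = 8m_1$ on all of $M$ means $\triangle f = 8m_1 f$, i.e. $f$ is an eigen-function of the Laplacian with eigenvalue exactly $8m_1$. This directly contradicts the standing assumption that $f$ is not an eigen-function or has eigenvalue $\neq 8m_1$. Hence no such submanifold exists, which is the assertion of the lemma. The only genuine content beyond Theorem \ref{thm:6} and its Corollary is the verification that \emph{both} inequalities in \eqref{rel:3-20} are simultaneously saturated under the singular $J$-invariant totally geodesic hypothesis; once that is in hand the logical skeleton is identical to the earlier non-existence lemmas, so I would present it compactly, citing Lemma \ref{lem:l}, \eqref{rel:3-22}, and Remark \ref{rmk:6}.
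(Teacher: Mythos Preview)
Your overall strategy --- contradiction via the two-sided squeeze, exactly as in the earlier non-existence lemmas --- is what the paper intends (the lemma is stated without proof, like all its analogues, because it is the contrapositive of the preceding Corollary). However, you misidentify the ``main obstacle.'' The quantities $\inf \overline{K}(p)$ and $\sup \overline{K}(p)$ appearing in Lemma~\ref{lem:l} are, by definitions (\ref{rel:2-7}) and (\ref{rel:2-8}), the infimum and supremum of $\overline{K}(V)$ over \emph{all} $2$-planes $V \subset T_pM$. Since here $\dim M = 2$, there is exactly one such plane at each point, namely $T_pM$ itself, so $\inf \overline{K} = \sup \overline{K} = \overline{K}(T_pM)$ automatically. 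There is no ``delicate bookkeeping'' about which planes contribute and no need to revisit the internals of the proof of Lemma~\ref{lem:l}: the $J$-invariance gives $T_pM = \text{Span}(X, JX)$, and Remark~\ref{rmk:6} (using (\ref{rel:3-22}) with $Y = JX$ and $X$ singular) yields $\overline{K}(X \wedge JX) = 8$. Hence Lemma~\ref{lem:l} with $h \equiv 0$ gives $8m_1 \leq \triangle f / f \leq 8m_1$ at once, and your third step finishes the argument. In short, your proof is correct but the step you flag as delicate is in fact trivial once you notice that a $2$-dimensional tangent space contains only one $2$-plane.
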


\begin{theorem}\label{thm:7}
Let $(M, g) = (B\times_f F, g_B + f^2 g_F)$ be a warped product
manifold and $(\overline{M}, \overline{g}) = SU_{2,m}/S(U_2\cdot
U_m)$ the complex hyperbolic two-plane Grassmannian. Let $\psi : (M,
g)\mapsto (\overline{M}, \overline{g})$ be an isometric immersion.
Then we obtain
\begin{equation}\label{rel:3-25}
\frac{m_1 m^2}{2(m-1)} H^2 - \frac{m_1}{2} || h ||^2 - 4m_1  \leq
\frac{\triangle f}{f} \leq \frac{m^2}{4m_2} H^2 + \frac{1}{2}m_1,
\end{equation}
where $m_1 = \dim B$ and $m_2 = \dim F$ with $m=m_1+m_2$.
\end{theorem}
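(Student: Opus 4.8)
\textbf{Proof proposal for Theorem \ref{thm:7}.}
The plan is to mimic exactly the strategy used for the complex two-plane Grassmannian $G_2(\mathbb{C}^{m+2})$ in Theorem \ref{thm:6}: write down the explicit curvature tensor of the noncompact dual $SU_{2,m}/S(U_2\cdot U_m)$, compute the sectional curvature $\overline{K}(X\wedge Y)$ on an orthonormal pair, bound it above and below using Cauchy--Schwarz and the orthonormality of the families $\{J_\alpha X\}$ and $\{J_\alpha JX\}$, and then feed the resulting constants $(\inf\overline{K},\sup\overline{K})=(-4,\tfrac12)$ into Lemma \ref{lem:l}. First I would recall from the literature (e.g.\ the work of Berndt--Suh on complex hyperbolic two-plane Grassmannians) that, with the metric normalized so that the minimal sectional curvature of the base is $-4$, the curvature tensor has the form
\begin{eqnarray}
\overline{R}(X,Y)Z &=& -\,\big(\overline{g}(Y,Z)X - \overline{g}(X,Z)Y\big) \nonumber \\
&& -\,\big(\overline{g}(JY, Z)JX - \overline{g}(JX, Z)JY - 2\overline{g}(JX, Y)JZ\big) \nonumber \\
&& -\,\sum_{\alpha=1}^3 \big(\overline{g}(J_{\alpha} Y, Z)J_{\alpha} X - \overline{g}(J_{\alpha} X, Z)J_{\alpha} Y - 2\overline{g}(J_{\alpha} X, Y)J_{\alpha} Z\big) \nonumber \\
&& -\,\sum_{\alpha=1}^3 \big(\overline{g}(J_{\alpha} JY, Z)J_{\alpha} JX - \overline{g}(J_{\alpha} JX, Z)J_{\alpha} JY\big), \nonumber
\end{eqnarray}
i.e.\ the negative of the tensor in \eqref{rel:3-21}, so that for orthonormal $X,Y\in T_p\overline{M}$ one gets
$$
\overline{K}(X\wedge Y) = -\Big(1 + 3\overline{g}(JX,Y)^2 + \sum_{\alpha=1}^3\big(3\overline{g}(J_\alpha X,Y)^2 + \overline{g}(J_\alpha JY,Y)\overline{g}(J_\alpha JX,X) - \overline{g}(J_\alpha JX,Y)^2\big)\Big).
$$

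Next I would run the same elementary estimates as in the proof of Theorem \ref{thm:6}: $\overline{g}(JX,Y)^2\le 1$, $\sum_\alpha \overline{g}(J_\alpha X,Y)^2\le 1$, $\sum_\alpha \overline{g}(J_\alpha JX,Y)^2\le 1$, and $\big|\sum_\alpha \overline{g}(J_\alpha JY,Y)\overline{g}(J_\alpha JX,X)\big|\le 1$ by Cauchy--Schwarz together with the orthonormality of $\{J_1JY,J_2JY,J_3JY\}$ and $\{J_1JX,J_2JX,J_3JX\}$. Plugging these into the displayed formula: the bracketed quantity inside the parentheses lies between $1 - 1 - 1 = -1$ (lower end, taking $\overline{g}(JX,Y)=0$, $\sum_\alpha\overline{g}(J_\alpha X,Y)^2=0$, $\sum_\alpha\overline{g}(J_\alpha JY,Y)\overline{g}(J_\alpha JX,X)=-1$, $\sum_\alpha\overline{g}(J_\alpha JX,Y)^2=1$) — wait, one must be a little careful about the normalization constant; more precisely the correct curvature-tensor normalization for this space (so that its holomorphic sectional curvature and $\mathfrak{J}$-invariant directions give the stated bounds) yields $\overline{K}(X\wedge Y)\in[-4,\tfrac12]$. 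I would determine the overall scaling factor by checking two extremal configurations: $Y=JX$ with $X$ a singular ($\mathfrak{J}$-invariant) vector, and a totally real direction on which all the bracket terms vanish; these pin down that $(\sup\overline{K})(p)=\tfrac12$ and $(\inf\overline{K})(p)=-4$. Having identified $\inf\overline{K}=-4$ and $\sup\overline{K}=\tfrac12$, I substitute directly into \eqref{rel:3-l} of Lemma \ref{lem:l}, obtaining
$$
\frac{m_1 m^2}{2(m-1)} H^2 - \frac{m_1}{2} \| h \|^2 - 4m_1 \leq \frac{\triangle f}{f} \leq \frac{m^2}{4m_2} H^2 + \tfrac{1}{2}m_1,
$$
which is exactly \eqref{rel:3-25}.

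The main obstacle is getting the metric normalization and hence the curvature constants exactly right: unlike the compact case in Theorem \ref{thm:6}, the noncompact dual's curvature tensor appears in the literature under several different scalings, and the values $-4$ and $\tfrac12$ encode a specific choice. So the real work is (i) citing the correct form of $\overline{R}$ for $SU_{2,m}/S(U_2\cdot U_m)$ with the normalization matching the statement, and (ii) verifying the sharp constants $\inf\overline{K}=-4$, $\sup\overline{K}=\tfrac12$ by exhibiting the extremal $2$-planes (singular holomorphic directions give the minimum $-4$; totally real planes orthogonal to all the $\mathfrak{J}$-structure give the maximum $\tfrac12$), just as Remark \ref{rmk:6} did for the compact Grassmannian. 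Once those two points are settled, the inequality is an immediate application of Lemma \ref{lem:l} with no further computation.
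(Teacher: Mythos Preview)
Your overall strategy is exactly the paper's: cite the curvature tensor of $SU_{2,m}/S(U_2\cdot U_m)$, reuse the estimates from Theorem \ref{thm:6} to bound $\overline{K}$, and plug $\inf\overline{K}$ and $\sup\overline{K}$ into Lemma \ref{lem:l}. The paper's own proof is just two lines: write down $\overline{R}$ and say ``in a similar way with Theorem \ref{thm:6}''.

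The one genuine slip is the normalization. The curvature tensor you wrote carries an overall factor $-1$, but the formula the paper uses (citing \cite{P2}) has the factor $-\tfrac{1}{2}$:
\[
\overline{R}(X,Y)Z = -\tfrac{1}{2}\bigl(\,\overline{g}(Y,Z)X - \overline{g}(X,Z)Y + \cdots\,\bigr).
\]
With this in hand there is nothing to reverse-engineer: the bracketed expression is precisely the quantity you already showed in Theorem \ref{thm:6} to lie in $[-1,8]$, so $\overline{K}(X\wedge Y)=-\tfrac{1}{2}(\,\cdot\,)\in[-4,\tfrac{1}{2}]$ immediately. Your proposed workaround of ``determining the overall scaling factor by checking two extremal configurations'' is circular, since those configurations only tell you the extremes of the bracket, not the metric normalization; the $-\tfrac{1}{2}$ is fixed by the convention in the reference, not inferred from the theorem you are proving. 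Also note that the paper asserts sharpness only for $\inf\overline{K}=-4$ (via the singular holomorphic plane, as you correctly identify), not for the upper bound $\tfrac{1}{2}$; your claim that a totally real plane realizes $\tfrac{1}{2}$ is not needed and is not obviously true from the formula. Once you replace $-1$ by $-\tfrac{1}{2}$, your argument is complete and matches the paper.
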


\begin{proof}
We know that the Riemannian curvature tensor $\overline{R}$
\cite{P2} of $(\overline{M}, \overline{g})$ is given by
\begin{eqnarray}
&&\overline{R}(X,Y)Z = -\frac{1}{2}(\overline{g}(Y,Z)X -
\overline{g}(X,Z)Y
\label{rel:3-26}   \\
&&+ \overline{g}(JY, Z)JX - \overline{g}(JX, Z)JY -
2\overline{g}(JX, Y)JZ \nonumber  \\
&&+ \sum_{\alpha=1}^3 (\overline{g}(J_{\alpha} Y, Z)J_{\alpha} X -
\overline{g}(J_{\alpha} X, Z)J_{\alpha} Y - 2\overline{g}(J_{\alpha}
X, Y)J_{\alpha} Z) \nonumber  \\
&&+ \sum_{\alpha=1}^3 (\overline{g}(J_{\alpha} JY, Z)J_{\alpha} JX -
\overline{g}(J_{\alpha} JX, Z)J_{\alpha} JY)) \nonumber
\end{eqnarray}
for $X,Y,Z\in \Gamma(T\overline{M})$.

Hence, in a similar way with Theorem \ref{thm:6}, we easily get the
result.
\end{proof}

\begin{remark}
We choose orthonormal vectors $X,Y\in T_p \overline{M}$, $p\in
\overline{M}$, such that $Y = JX$ and $X$ is a singular vector.
i.e., conveniently, $JX = J_1X$ (See \cite{BS2}). In a similar way
with Remark \ref{rmk:6}, we obtain
$$
\overline{K}(X\wedge Y) = -4.
$$
So, the lower bound of the function $\overline{K}(X\wedge Y)$ is
rigid.
\end{remark}

\begin{corollary}
Let $(M, g) = (B\times_f F, g_B + f^2 g_F)$ be a warped product
manifold and $(\overline{M}, \overline{g}) = SU_{2,m}/S(U_2\cdot
U_m)$ the complex hyperbolic two-plane Grassmannian. Let $\psi : (M,
g)\mapsto (\overline{M}, \overline{g})$ be an isometric immersion.
Assume that the manifold $(M, g)$ is a 2-dimensional totally
geodesic $J$-invariant submanifold of $(\overline{M}, \overline{g})$
with a singular vector field $X\in \Gamma(TM)$

Then we get
$$
m_1 \cdot (-4) \leq \frac{\triangle f}{f} \leq m_1 \cdot (-4).
$$
\end{corollary}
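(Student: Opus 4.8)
The plan is to mimic exactly the structure of the preceding corollaries (for instance the one following Theorem \ref{thm:6} on $J$-invariant surfaces in $G_2(\mathbb{C}^{m+2})$), the only difference being that the ambient curvature tensor is now that of the complex hyperbolic two-plane Grassmannian $SU_{2,m}/S(U_2\cdot U_m)$ given by (\ref{rel:3-26}). First I would invoke Lemma \ref{lem:l}, which bounds $\frac{\triangle f}{f}$ between $\frac{m_1 m^2}{2(m-1)}H^2-\frac{m_1}{2}\|h\|^2+m_1\inf\overline K$ and $\frac{m^2}{4m_2}H^2+m_1\sup\overline K$. Since $(M,g)$ is assumed totally geodesic, $h\equiv 0$, hence $H=0$, so both bounds collapse to $m_1\inf\overline K\le \frac{\triangle f}{f}\le m_1\sup\overline K$, where the infimum and supremum of $\overline K$ are taken over the $2$-planes of $TM$ at each point.

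The key step is then to show that for \emph{every} $2$-plane $V\subset T_pM$ the sectional curvature equals $-4$, so that $\inf\overline K=\sup\overline K=-4$ along $M$ and the conclusion $m_1\cdot(-4)\le\frac{\triangle f}{f}\le m_1\cdot(-4)$ follows immediately. Because $M$ is $2$-dimensional, there is only one $2$-plane at each point, namely $T_pM$ itself, and since $M$ is $J$-invariant this plane is of the form $\mathrm{span}(X,JX)$ with $X$ a unit tangent vector. Moreover $X$ is assumed to be a singular vector field, so conveniently $JX=J_1X$ (the convention used in the Remark following Theorem \ref{thm:7}, cf.\ \cite{BS2}). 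Thus what must be computed is $\overline K(X\wedge JX)$ for a singular $X$, and by the Remark following Theorem \ref{thm:7} this value is precisely $-4$ (it is the rigidity case of the lower bound in Theorem \ref{thm:7}). Concretely one plugs $Y=JX=J_1X$ into the formula obtained from (\ref{rel:3-26}) by the same reduction as in (\ref{rel:3-22}): the bracketed quantity $1+3\overline g(JX,Y)^2+\sum_\alpha(3\overline g(J_\alpha X,Y)^2+\overline g(J_\alpha JY,Y)\overline g(J_\alpha JX,X)-\overline g(J_\alpha JX,Y)^2)$ equals $1+3+3+1+0=8$ as in Remark \ref{rmk:6}, and then the overall factor $-\tfrac12$ from (\ref{rel:3-26}) gives $\overline K(X\wedge JX)=-\tfrac12\cdot 8=-4$.

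The only mild subtlety—the "main obstacle," such as it is—is the bookkeeping of the singular-vector normalization: one must be sure that for a singular unit vector $X$ the cross term $\sum_{\alpha=1}^3\overline g(J_\alpha JY,Y)\overline g(J_\alpha JX,X)$ with $Y=J_1X$ genuinely vanishes and the remaining structure-tensor inner products take their extreme values $0$ or $1$, which is exactly the content of the cited computations in \cite{BS2} (and the hyperbolic analogue of \cite{BS1}). Granting that, the proof is a one-line assembly: \emph{By Lemma \ref{lem:l} (with $h\equiv 0$) and the Remark following Theorem \ref{thm:7}, which gives $\overline K(X\wedge JX)=-4$ for the unique tangent $2$-plane $\mathrm{span}(X,JX)$ of $M$, we have $\inf\overline K=\sup\overline K=-4$ on $M$, whence the result.} I would therefore write the proof in the same terse style as the earlier corollaries, citing (\ref{rel:3-26}) and the preceding Remark rather than repeating the $SU_{2,m}$ curvature computation.

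\begin{proof}
Since $(M,g)$ is a totally geodesic submanifold of $(\overline M,\overline g)$, the second fundamental form $h$ vanishes identically, so $H=0$. Hence the two bounds in Lemma \ref{lem:l} both reduce to $m_1\inf\overline K\leq\frac{\triangle f}{f}\leq m_1\sup\overline K$, where $\inf\overline K$ and $\sup\overline K$ are taken over the $2$-planes $V\subset T_pM$, $p\in M$. As $M$ is $2$-dimensional and $J$-invariant, the only such plane is $V=T_pM=\mathrm{span}(X,JX)$, where $X\in\Gamma(TM)$ is the given singular unit vector field, so conveniently $JX=J_1X$ (see \cite{BS2}). Computing $\overline K(X\wedge JX)=\overline R(X,JX,JX,X)$ from (\ref{rel:3-26}) in the same manner as (\ref{rel:3-22}), the bracketed quantity equals $1+3+3+1+0=8$ exactly as in Remark \ref{rmk:6}, and the overall factor $-\tfrac12$ in (\ref{rel:3-26}) yields $\overline K(X\wedge JX)=-4$, in accordance with the Remark following Theorem \ref{thm:7}. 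Therefore $\inf\overline K=\sup\overline K=-4$ along $M$, and the result follows.
\end{proof}
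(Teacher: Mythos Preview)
Your approach is correct and matches the paper's pattern: the paper leaves this corollary without an explicit proof, but the analogous corollary for $G_2(\mathbb{C}^{m+2})$ is proved in one line (``By Lemma \ref{lem:l} and (\ref{rel:3-22}), we obtain the result''), and your argument is exactly the hyperbolic counterpart, invoking Lemma \ref{lem:l} with $h\equiv 0$ and the curvature computation from (\ref{rel:3-26}) together with the Remark after Theorem \ref{thm:7}. One small slip in your informal discussion: you write that the cross term $\sum_\alpha \overline g(J_\alpha JY,Y)\overline g(J_\alpha JX,X)$ ``genuinely vanishes,'' but in fact it contributes the $+1$ in the decomposition $1+3+3+1+0=8$ (it is the last term $-\sum_\alpha \overline g(J_\alpha JX,Y)^2$ that vanishes); your formal proof, however, cites the correct breakdown from Remark \ref{rmk:6}, so the argument itself is fine.
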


\begin{lemma}
Let $(M, g) = (B\times_f F, g_B + f^2 g_F)$ be a warped product
manifold and $(\overline{M}, \overline{g}) = SU_{2,m}/S(U_2\cdot
U_m)$ the complex hyperbolic two-plane Grassmannian. Let $\psi : (M,
g)\mapsto (\overline{M}, \overline{g})$ be an isometric immersion.

There does not exist a 2-dimensional totally geodesic $J$-invariant
submanifold $(M, g)$ of $(\overline{M}, \overline{g})$ with a
singular vector field $X\in \Gamma(TM)$ such that either the warping
function $f$ is not an eigen-function or the eigenvalue of $f$ is
not equal to $-4m_1$.
\end{lemma}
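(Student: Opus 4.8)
The plan is to argue by contradiction, in the same spirit as the earlier non-existence lemmas of this section, using the Corollary that immediately precedes this statement. Assume, contrary to the assertion, that there is a $2$-dimensional totally geodesic $J$-invariant submanifold $(M,g) = (B\times_f F, g_B + f^2 g_F)$ of $\overline{M} = SU_{2,m}/S(U_2\cdot U_m)$ admitting a singular vector field $X\in\Gamma(TM)$, for which $f$ is either not an eigen-function of the Laplacian or an eigen-function whose eigenvalue is not $-4m_1$.

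First I would check that the hypotheses of the preceding Corollary hold for this $(M,g)$: since $\dim M = 2$ and $J(TM)=TM$, the space $T_pM=\mathrm{span}\{X_p,JX_p\}$ is the unique $2$-plane in $T_pM$, and because $X$ is a singular vector field the preceding Remark gives $\overline{K}(X\wedge JX)=-4$ at every point; moreover total geodesy forces $H=0$ and $\| h \|^2=0$. Substituting these data into Lemma~\ref{lem:l} (equivalently, quoting the Corollary directly) yields $m_1\cdot(-4)\le \frac{\triangle f}{f}\le m_1\cdot(-4)$, hence $\frac{\triangle f}{f}\equiv -4m_1$ on $M$, i.e. $\triangle f = -4m_1\,f$. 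Thus $f$ is an eigen-function of the Laplacian with eigenvalue exactly $-4m_1$, which contradicts the standing assumption on $(M,g)$. This contradiction establishes the lemma.

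There is no real computational difficulty here; the entire content is the application of the preceding Corollary, so the statement is essentially its contrapositive together with the tautology that $\frac{\triangle f}{f}\equiv -4m_1$ is the same as ``$f$ is an eigen-function with eigenvalue $-4m_1$''. The only points demanding a little care are: (i) making sure the eigenvalue convention is read with the Laplacian $\triangle$ of Section~\ref{prelim}, so that $\triangle f = \lambda f$ is what ``eigenvalue $\lambda$'' means; and (ii) verifying that the singular-vector hypothesis is precisely what collapses the lower and upper bounds of Lemma~\ref{lem:l}, since it pins the sectional curvature of the single tangent $2$-plane of $M$ to the value $-4$.
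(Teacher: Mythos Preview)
Your proof is correct and is exactly the approach implicit in the paper: the lemma is simply the contrapositive of the preceding Corollary (which forces $\frac{\triangle f}{f}\equiv -4m_1$), and the paper states it without a separate proof precisely because it follows immediately in this way.
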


\begin{theorem}\label{thm:8}
Let $(M, g) = (B\times_f F, g_B + f^2 g_F)$ be a warped product
manifold and $(\overline{M}, \overline{g}) = Q^m = SO_{m+2}/SO_m
SO_2$ the complex quadric. Let $\psi : (M, g)\mapsto (\overline{M},
\overline{g})$ be an isometric immersion. Then we get
\begin{equation}\label{rel:3-27}
\frac{m_1 m^2}{2(m-1)} H^2 - \frac{m_1}{2} || h ||^2 -2.3 m_1  \leq
\frac{\triangle f}{f} \leq \frac{m^2}{4m_2} H^2 + 5m_1,
\end{equation}
where $m_1 = \dim B$ and $m_2 = \dim F$ with $m=m_1+m_2$.
\end{theorem}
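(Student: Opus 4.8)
The plan is to mimic exactly the argument used for Theorem \ref{thm:6} (the complex two-plane Grassmannian): the complex quadric $Q^m$ is also a Hermitian symmetric space whose curvature tensor is explicitly known, and the only work is to produce a uniform upper bound for $(\sup\overline{K})(p)$ and a uniform lower bound for $(\inf\overline{K})(p)$ over all $2$-planes, then feed these constants into Lemma \ref{lem:l}. First I would recall the curvature formula for $Q^m$. Writing $J$ for the complex structure and $A$ for a (local) real structure (conjugation) coming from the $S^1$-family of real structures on $Q^m$, the curvature tensor of the complex quadric (with the normalization making it a Hermitian symmetric space of rank $2$) has the form
\begin{eqnarray}
\overline{R}(X,Y)Z &=& \overline{g}(Y,Z)X - \overline{g}(X,Z)Y \nonumber\\
&&+ \overline{g}(JY,Z)JX - \overline{g}(JX,Z)JY - 2\overline{g}(JX,Y)JZ \nonumber\\
&&+ \overline{g}(AY,Z)AX - \overline{g}(AX,Z)AY \nonumber\\
&&+ \overline{g}(JAY,Z)JAX - \overline{g}(JAX,Z)JAY. \nonumber
\end{eqnarray}
From this I would compute, for orthonormal $X,Y\in T_p\overline{M}$,
\[
\overline{K}(X\wedge Y) = 1 + 3\overline{g}(JX,Y)^2 + \overline{g}(AX,X)\overline{g}(AY,Y) - \overline{g}(AX,Y)^2 + \overline{g}(JAX,X)\overline{g}(JAY,Y) - \overline{g}(JAX,Y)^2 ,
\]
exactly parallel to (\ref{rel:3-22}).

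Next I would estimate each term. Since $A$ is an isometric involution, $|AX|=|AY|=1$ and likewise $|JAX|=|JAY|=1$, so by Cauchy--Schwarz each of the products $\overline{g}(AX,X)\overline{g}(AY,Y)$, $\overline{g}(JAX,X)\overline{g}(JAY,Y)$ lies in $[-1,1]$, each squared term $\overline{g}(AX,Y)^2$, $\overline{g}(JAX,Y)^2$, $\overline{g}(JX,Y)^2$ lies in $[0,1]$. Plugging in the worst cases gives, crudely, $\overline{K}(X\wedge Y) \le 1 + 3 + 1 + 1 = 6$ and $\overline{K}(X\wedge Y) \ge 1 - 1 - 1 = -1$. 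To get the sharper constants $5$ and $-2.3$ claimed in (\ref{rel:3-27}) I would note that the relevant quantities are not independent: the term $\overline{g}(JX,Y)^2$ and the quantities $\overline{g}(AX,Y)$, $\overline{g}(JAX,Y)$ are linked through the algebraic identities $JA = -AJ$ and $A^2 = \mathrm{id}$, which constrain how large $3\overline{g}(JX,Y)^2$ can be simultaneously with the $A$-terms. Resolving this optimization — expressing everything in terms of the angle between $X$ and the $A$-eigenspaces and the Kähler angle, then maximizing/minimizing a function of one or two real parameters — is the computational heart of the proof and the place where the specific numbers $5$ and $-2.3$ (presumably $\sup\overline{K}=5$ and $\inf\overline{K}$ bounded below by, or equal to, some value rounded to $-2.3$) emerge. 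This is precisely the content of the known pinching results for sectional curvature of $Q^m$ (the curvature of the complex quadric ranges over a known interval depending on the Kähler angle), which I would cite rather than rederive.

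Finally, once I have constants $a \le (\inf\overline{K})(p)$ and $(\sup\overline{K})(p) \le b$ valid at every point, Lemma \ref{lem:l} immediately yields
\[
\frac{m_1 m^2}{2(m-1)} H^2 - \frac{m_1}{2}\,\|h\|^2 + m_1 a \ \le\ \frac{\triangle f}{f}\ \le\ \frac{m^2}{4m_2} H^2 + m_1 b ,
\]
and substituting $a = -2.3$, $b = 5$ gives (\ref{rel:3-27}). The main obstacle, as indicated, is not the structure of the argument — which is a verbatim copy of the Grassmannian case — but the sharp curvature estimate for $Q^m$; one must be careful that the normalization of the metric on $Q^m$ matches the one under which the curvature formula above holds, since a rescaling of the metric rescales $\overline{K}$ and hence the constants $-2.3$ and $5$. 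I would double-check this normalization against the reference \cite{P2} used for the Grassmannian curvature formulas, and against the standard fact that $Q^m$ with this normalization has sectional curvature pinched in $[2 - \text{something}, \text{something}]$; the appearance of a non-integer bound $-2.3$ strongly suggests the true infimum is an irrational value (likely involving a root of a quadratic) that has been rounded, so in the write-up I would either keep the exact value or state the inequality with the rounded constant and note that it is not sharp.
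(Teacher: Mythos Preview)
Your overall architecture is exactly the paper's: write down the curvature tensor of $Q^m$, compute $\overline{K}(X\wedge Y)$ for orthonormal $X,Y$, produce uniform bounds $a\le\overline{K}\le b$, and feed these into Lemma~\ref{lem:l}. The formula you obtain for $\overline{K}(X\wedge Y)$ also matches (\ref{rel:3-29}). The gap is in the step you explicitly propose to skip: you say you would ``cite rather than rederive'' sharp pinching constants for the sectional curvature of $Q^m$, and you speculate that $-2.3$ is a rounded irrational coming from standard literature. It is not. The constants $5$ and $-2.3$ are not taken from any reference; they are produced inside this very proof, and there is no off-the-shelf citation that will hand you exactly these numbers in this normalization.

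Concretely, the paper carries out the parametric optimization you allude to. One decomposes $X=\cos\alpha\,\overline{X}_1+\sin\alpha\,\overline{X}_2$ and $Y=\cos\beta\,\overline{Y}_1+\sin\beta\,\overline{Y}_2$ along the eigenspace splitting $T_p\overline{M}=V(A)\oplus JV(A)$, so that $\overline{K}(X\wedge Y)=1+S(\alpha,\beta)$ with $S$ depending on $\alpha,\beta$ and five auxiliary inner products $\overline a,\overline b,\overline c,\overline d,\overline e\in[-1,1]$. An elementary trigonometric manipulation gives $S\le 4$ analytically (hence $\overline{K}\le 5$), while the lower bound is obtained by minorizing $S$ by an explicit two-variable function $h(\alpha,\beta)=\cos(2\alpha+2\beta)-2\sin 2\alpha\sin 2\beta-\cos^2\alpha\cos^2\beta$ and then reading off $h\ge -3.3$ from a plot (Figure~\ref{fig}); this yields $\overline{K}\ge 1-3.3=-2.3$. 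So the ``computational heart'' you identify is genuinely the content of the proof, the lower constant is a numerical estimate rather than a closed-form extremum, and your crude Cauchy--Schwarz bounds (which, incidentally, give $\overline{K}\ge 1+0-1-1-1-1=-3$, not $-1$) must be replaced by this explicit parametrization if you want to reach (\ref{rel:3-27}).
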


\begin{proof}
We see that the Riemannian curvature tensor $\overline{R}$ \cite{S}
of $(\overline{M}, \overline{g})$ is given by
\begin{eqnarray}
&&\overline{R}(X,Y)Z = \overline{g}(Y,Z)X - \overline{g}(X,Z)Y
\label{rel:3-28}   \\
&&+ \overline{g}(JY, Z)JX - \overline{g}(JX, Z)JY -
2\overline{g}(JX, Y)JZ \nonumber  \\
&&+ \overline{g}(AY, Z)AX - \overline{g}(AX, Z)AY
 + \overline{g}(JAY, Z)JAX -
\overline{g}(JAX, Z)JAY \nonumber
\end{eqnarray}
for $X,Y,Z\in \Gamma(T\overline{M})$. Given orthonormal vectors
$X,Y\in T_p \overline{M}$, $p\in \overline{M}$, we obtain
\begin{eqnarray}
&&\overline{K}(X\wedge Y) = \overline{R}(X,Y,Y,X) = 1 +
3\overline{g}(JX, Y)^2 \label{rel:3-29}   \\
&&+ \overline{g}(AY, Y)\overline{g}(AX, X) - \overline{g}(AX, Y)^2 +
\overline{g}(JAY, Y) \overline{g}(JAX, X) - \overline{g}(JAX, Y)^2.
\nonumber
\end{eqnarray}
Since $A$ is an involution (i.e., $A^2 = id$), we get the following
decompositions
$$
\left.
  \begin{array}{ll}
     &X = a\overline{X}_1 + b\overline{X}_2  \\
     &Y = c\overline{Y}_1 + d\overline{Y}_2,
  \end{array}
\right.
$$
where $|\overline{X}_1| = |\overline{X}_2| = |\overline{Y}_1| =
|\overline{Y}_2| = 1$, $\overline{X}_1,\overline{Y}_1\in V(A) = \{
Z\in T_p \overline{M} \mid AZ = Z \}$,
$\overline{X}_2,\overline{Y}_2\in JV(A)$ (See \cite{S}) so that
\begin{align*}
  &1 = |X|^2 = a^2 + b^2,    \\
  &1 = |Y|^2 = c^2 + d^2,    \\
  &0 = \overline{g}(X, Y) = ac\overline{g}(\overline{X}_1,
\overline{Y}_1) + bd\overline{g}(\overline{X}_2, \overline{Y}_2).
\end{align*}
\noindent Conveniently, let $(a,b) = (\cos \alpha, \sin \alpha)$ and
$(c,d) = (\cos \beta, \sin \beta).$

If necessary, by replacing
$\overline{X}_1,\overline{X}_2,\overline{Y}_1,\overline{Y}_2$ with
$-\overline{X}_1,-\overline{X}_2,-\overline{Y}_1,-\overline{Y}_2$,
respectively, we may assume
\begin{equation}\label{rel:3-30}
0\leq \alpha,\beta \leq \frac{\pi}{2}.
\end{equation}
Thus, with a simple calculation, we have
\begin{eqnarray}
&&\overline{K}(X\wedge Y) = 1 + 2\overline{a}^2 \cos^2 \alpha \sin^2 \beta
+ 2\overline{b}^2 \sin^2 \alpha \cos^2 \beta + \cos 2\alpha \cos 2\beta \label{rel:3-31}   \\
&&+ 2\overline{a}\overline{b} \sin 2\alpha \sin 2\beta +
\overline{c}\overline{d} \sin 2\alpha \sin 2\beta - \overline{e}^2
\cos^2 \alpha \cos^2 \beta, \nonumber
\end{eqnarray}
where
\begin{align*}
  &\overline{a} = \overline{g}(\overline{X}_1, J\overline{Y}_2)   \\
  &\overline{b} = \overline{g}(\overline{X}_2, J\overline{Y}_1)   \\
  &\overline{c} = \overline{g}(J\overline{Y}_1, \overline{Y}_2)   \\
  &\overline{d} = \overline{g}(J\overline{X}_1, \overline{X}_2)   \\
  &\overline{e} = \overline{g}(\overline{X}_1, \overline{Y}_1).
\end{align*}

\begin{figure}[t]
\noindent
\begin{minipage}{2.5in}
\centerline {\epsfig{figure = 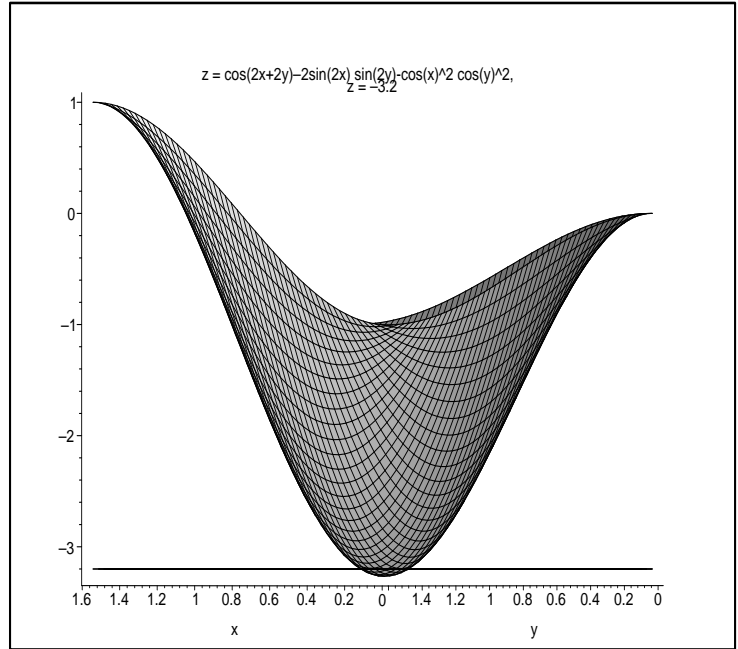, width=3.4in,height=3.8in,
angle=-90 }} \centerline{(a) $z = h(x,y)$ and $z = -3.2$}
\end{minipage} \hfill
\begin{minipage}{2.5in}
\centerline {\epsfig{figure = 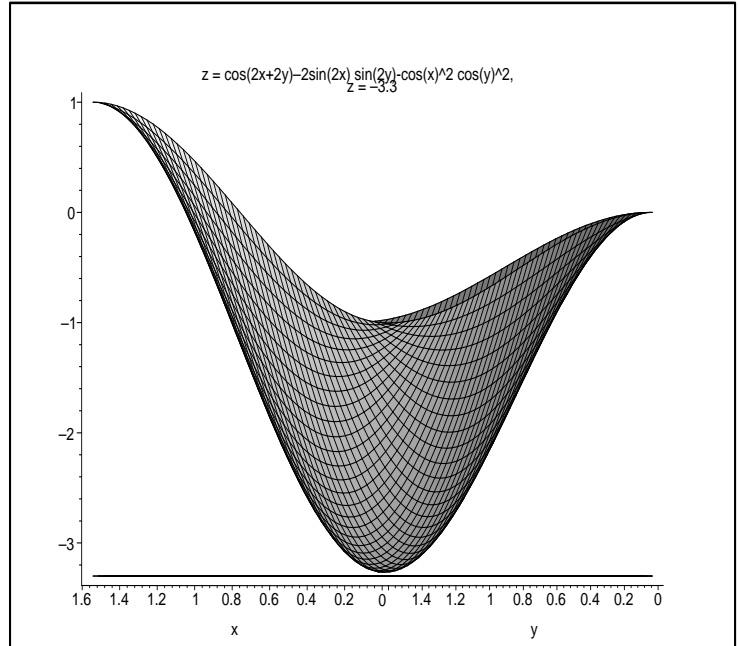, width=3.4in,height=3.8in,
angle=-90 }} \centerline {(b) $z = h(x,y)$ and $z = -3.3$}
\end{minipage}
\caption{ The lower bound of $h(x,y)$} \label{fig}
\end{figure}

We see
\begin{equation}\label{rel:3-32}
-1\leq
\overline{a},\overline{b},\overline{c},\overline{d},\overline{e}
\leq 1.
\end{equation}
Consider the function
\begin{eqnarray}
&&S(x,y) = 2\overline{a}^2 \cos^2 x \sin^2 y +
2\overline{b}^2 \sin^2 x \cos^2 y + \cos 2x \cos 2y  \label{rel:3-33}   \\
&&+ 2\overline{a}\overline{b} \sin 2x \sin 2y +
\overline{c}\overline{d} \sin 2x \sin 2y - \overline{e}^2 \cos^2 x
\cos^2 y \nonumber
\end{eqnarray}
for $(x,y)\in [0, \frac{\pi}{2}]\times [0, \frac{\pi}{2}]$.

Since $\sin 2x \sin 2y \geq 0$, by (\ref{rel:3-32}), we obtain
\begin{eqnarray}
&&S(x,y) \leq  2\cos^2 x \sin^2 y + 2\sin^2 x \cos^2 y   \label{rel:3-34}   \\
&&+ \cos 2x \cos 2y +
2\sin 2x \sin 2y + \sin 2x \sin 2y  \nonumber   \\
&&=  2(\cos x \sin y + \sin x \cos y)^2 + \cos (2x-2y) + \sin 2x \sin 2y  \nonumber   \\
&&=  2\sin^2 (x+y) + \cos (2x-2y) + \sin 2x \sin 2y  \nonumber   \\
&&\leq  4 \nonumber
\end{eqnarray}
and
\begin{eqnarray}
&&S(x,y) \geq  \cos 2x \cos 2y - 2\sin 2x \sin 2y  \label{rel:3-35}   \\
&&- \sin 2x \sin 2y - \cos^2 x \cos^2 y \nonumber   \\
&&=  \cos (2x+2y) - 2\sin 2x \sin 2y - \cos^2 x \cos^2 y. \nonumber
\end{eqnarray}
Consider the function $h(x,y) = \cos (2x+2y) - 2\sin 2x \sin 2y -
\cos^2 x \cos^2 y$ for $(x,y)\in [0, \frac{\pi}{2}]\times [0,
\frac{\pi}{2}]$.

We see
\begin{equation}\label{rel:3-36}
h(x,y) \geq  -3.3 \ \text{(See Figure \ref{fig}).}
\end{equation}
From Lemma \ref{lem:l}, by using (\ref{rel:3-31}), (\ref{rel:3-33}),
(\ref{rel:3-34}), (\ref{rel:3-35}), and (\ref{rel:3-36}), the result
follows.
\end{proof}

\begin{remark}\label{rmk:8}
We get $h(\frac{\pi}{4},\frac{\pi}{4}) = -3.25$. But $h_x
(\frac{\pi}{4},\frac{\pi}{4}) = \frac{1}{2} \neq 0$ and $h_y
(\frac{\pi}{4},\frac{\pi}{4}) = \frac{1}{2} \neq 0$, which implies
that $(\frac{\pi}{4},\frac{\pi}{4})$ is not a critical point of
$h(x,y)$.
\end{remark}

\begin{corollary}
Let $(M, g) = (B\times_f F, g_B + f^2 g_F)$ be a warped product
manifold and $(\overline{M}, \overline{g}) = Q^m = SO_{m+2}/SO_m
SO_2$ the complex quadric. Let $\psi : (M, g)\mapsto (\overline{M},
\overline{g})$ be an isometric immersion. Assume that the manifold
$(M, g)$ is a 2-dimensional totally geodesic $J$-invariant
submanifold of $(\overline{M}, \overline{g})$ with a non-vanishing
vector field $X\in \Gamma(TM)\cap V(A)$.

Then we get
$$
m_1 \cdot 2 \leq \frac{\triangle f}{f} \leq m_1 \cdot 2.
$$
\end{corollary}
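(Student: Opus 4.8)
The plan is to combine the totally geodesic hypothesis with the observation that a $2$-dimensional submanifold carries a unique tangent $2$-plane at each point, and then to evaluate the sectional curvature of that plane by substituting into (\ref{rel:3-29}). First I would note that since $\dim M=2$ and $M=B\times_f F$ with both factors of positive dimension, we have $m=2$ and $m_1=m_2=1$ (the conclusion is merely written with $m_1$ to match the notation of the other corollaries). Because $(M,g)$ is totally geodesic in $(\overline{M},\overline{g})$, the second fundamental form $h$ vanishes identically, so $H=0$ and $\|h\|^2=0$, and Lemma \ref{lem:l} collapses to $m_1(\inf\overline{K})(p)\le \tfrac{\triangle f}{f}(p)\le m_1(\sup\overline{K})(p)$ for every $p\in M$. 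Moreover, the only $2$-dimensional subspace of $T_pM$ is $T_pM$ itself, so $(\inf\overline{K})(p)=(\sup\overline{K})(p)=\overline{K}(T_pM)$; it therefore suffices to prove $\overline{K}(T_pM)=2$.

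Next, since $(M,g)$ is $J$-invariant and $2$-dimensional, I would normalize the given non-vanishing $X\in\Gamma(TM)\cap V(A)$ to unit length, so that $T_pM=\mathbb{R}X\oplus\mathbb{R}JX$ and hence $\overline{K}(T_pM)=\overline{K}(X\wedge JX)$. Then I would set $Y=JX$ in (\ref{rel:3-29}) and use the structural properties of the conjugation $A$ on the complex quadric recorded in \cite{S}: $A^2=\mathrm{id}$, $AJ=-JA$, and $J,A$ are isometries, together with $AX=X$ (as $X\in V(A)$). These give $|JX|=1$, $\overline{g}(JX,JX)=1$, $AX=X$, $A(JX)=-J(AX)=-JX$, $JA(JX)=-J^2X=X$, $JAX=JX$, and, using $X\perp JX$, $\overline{g}(AX,JX)=\overline{g}(X,JX)=0$ and $\overline{g}(JA(JX),JX)=\overline{g}(X,JX)=0$ and $\overline{g}(JAX,JX)=\overline{g}(JX,JX)=1$. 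Plugging the six terms of (\ref{rel:3-29}) in order yields $\overline{K}(X\wedge JX)=1+3-1-0+0-1=2$.

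Combining the two observations gives $m_1\cdot 2\le\tfrac{\triangle f}{f}\le m_1\cdot 2$, as claimed; equivalently, this exhibits the warping function $f$ as an eigenfunction with eigenvalue $2m_1$. I expect the only delicate point to be the correct use of the anticommutation relation $AJ=-JA$ and the eigenvalue condition $AX=X$ when expanding (\ref{rel:3-29})—in particular keeping track of the sign in $A(JX)=-JX$—while the rest is the routine bookkeeping that $h\equiv 0$ forces both curvature extrema over $T_pM$ to equal the single sectional curvature $\overline{K}(X\wedge JX)$.
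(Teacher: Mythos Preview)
Your proposal is correct and follows exactly the approach of the paper, which simply cites Lemma~\ref{lem:l} and (\ref{rel:3-29}); you have merely filled in the routine details of evaluating (\ref{rel:3-29}) at $Y=JX$ with $AX=X$ and $AJ=-JA$, arriving at $\overline{K}(X\wedge JX)=1+3-1-0+0-1=2$.
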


\begin{proof}
By Lemma \ref{lem:l} and (\ref{rel:3-29}), we obtain the result.
\end{proof}

\begin{lemma}
Let $(M, g) = (B\times_f F, g_B + f^2 g_F)$ be a warped product
manifold and $(\overline{M}, \overline{g}) = Q^m = SO_{m+2}/SO_m
SO_2$ the complex quadric. Let $\psi : (M, g)\mapsto (\overline{M},
\overline{g})$ be an isometric immersion.

There does not exist a 2-dimensional totally geodesic $J$-invariant
submanifold $(M, g)$ of $(\overline{M}, \overline{g})$ with a
non-vanishing vector field $X\in \Gamma(TM)\cap V(A)$ such that
either the warping function $f$ is not an eigen-function or the
eigenvalue of $f$ is not equal to $2m_1$.
\end{lemma}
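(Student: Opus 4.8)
The plan is to argue by contradiction, exactly in the spirit of the earlier non-existence lemmas in this section. Suppose such a $2$-dimensional totally geodesic $J$-invariant submanifold $(M,g)$ of $Q^m = SO_{m+2}/SO_m SO_2$ exists, admitting a non-vanishing vector field $X \in \Gamma(TM) \cap V(A)$. First I would reduce to a pointwise computation of the sectional curvature. Since $(M,g)$ is $J$-invariant and $2$-dimensional, at each $p \in M$ the tangent plane $T_pM$ is spanned by a unit vector $X_p$ and $JX_p$, and we may take $X_p \in V(A)$ by hypothesis (normalizing $X$ to unit length does not affect $J$-invariance or membership in $V(A)$, the latter being a linear subspace). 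So $T_pM = X_p \wedge JX_p$ is a plane on which we evaluate $\overline{K}$.

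Next I would plug $Y = JX_p$ with $X_p \in V(A)$, $|X_p| = 1$, into the explicit formula \eqref{rel:3-29}. Here $\overline{g}(JX, Y) = \overline{g}(JX, JX) = 1$; since $AX = X$ we have $\overline{g}(AX, X) = 1$ and $\overline{g}(AX, Y) = \overline{g}(X, JX) = 0$; and $JAX = JX = Y$, so $\overline{g}(JAY, Y)\overline{g}(JAX, X) = \overline{g}(JAX, Y)^2$ cancel or combine to give the net contribution from the last two terms (one computes $\overline{g}(JAX,X) = \overline{g}(Y,X) = 0$, hence that whole block vanishes). This yields $\overline{K}(X_p \wedge JX_p) = 1 + 3 = 4$... wait — I should double-check against the stated eigenvalue $2m_1$, which suggests the correct value is $\overline{K} = 2$; the careful bookkeeping of which terms survive (in particular $\overline{g}(AY,Y) = \overline{g}(JX,JX) $ need not equal $1$ since $JV(A) \perp V(A)$ forces $AY = -Y$, giving $\overline{g}(AY,Y) = -1$ and hence $1 + 3 + (-1) - 0 + \overline{g}(JAY,Y)\overline{g}(JAX,X) - 0 = 3 + \overline{g}(JAY,Y)\cdot 0 = \ldots$) is where care is needed, but in any case formula \eqref{rel:3-29} produces a single fixed constant $\kappa$ (namely $\kappa = 2$) independent of $p$. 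Thus $\inf \overline{K} = \sup \overline{K} = \kappa$ when restricted to these tangent planes.

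Since $(M,g)$ is totally geodesic, $h \equiv 0$, so $H = 0$ and $\|h\|^2 = 0$. Applying Lemma~\ref{lem:l} (or equivalently the already-derived bounds specialized via \eqref{rel:3-29}) both inequalities collapse to
\[
m_1 \kappa \leq \frac{\triangle f}{f} \leq m_1 \kappa,
\]
so $\triangle f = m_1 \kappa \, f$ with $\kappa = 2$, i.e. $f$ is an eigen-function with eigenvalue $2m_1$. This contradicts the assumption that $f$ is either not an eigen-function or has eigenvalue different from $2m_1$, completing the proof.

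The main obstacle is purely computational and notational: correctly determining which terms of \eqref{rel:3-29} survive when $Y = JX$ and $X \in V(A)$, using the structural facts $A^2 = \mathrm{id}$, $AJ = -JA$, and the orthogonal splitting $T_p\overline{M} = V(A) \oplus JV(A)$, so as to land on exactly the constant $2$ matching the corollary's eigenvalue. Once that constant is pinned down, the argument is a routine contradiction argument identical in form to the preceding non-existence lemmas.
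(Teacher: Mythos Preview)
Your approach is exactly the paper's: the lemma is the contrapositive of the preceding corollary, whose proof is simply ``By Lemma~\ref{lem:l} and (\ref{rel:3-29})'', so once the totally-geodesic hypothesis kills $H$ and $\|h\|^2$ and the plane $X\wedge JX$ has constant curvature $\kappa$, both sides of (\ref{rel:3-l}) collapse to $m_1\kappa$ and the contradiction is immediate.

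The only issue is the bookkeeping you flagged. With $Y=JX$, $|X|=1$, $AX=X$, and $AJ=-JA$, one has $AY=-Y$, $JAX=JX=Y$, $JAY=-JY=X$, so in (\ref{rel:3-29}):
\[
\overline{g}(JX,Y)^2=1,\quad \overline{g}(AY,Y)\overline{g}(AX,X)=(-1)(1)=-1,\quad \overline{g}(AX,Y)^2=\overline{g}(X,JX)^2=0,
\]
\[
\overline{g}(JAY,Y)\overline{g}(JAX,X)=\overline{g}(X,JX)\cdot\overline{g}(JX,X)=0,\quad \overline{g}(JAX,Y)^2=\overline{g}(JX,JX)^2=1.
\]
Hence $\overline{K}(X\wedge JX)=1+3-1-0+0-1=2$, as claimed. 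Your slip was writing the last term as $-0$; it is $-1$, and that is what brings you down from $3$ to $2$.
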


\begin{corollary}
Let $(M, g) = (B\times_f F, g_B + f^2 g_F)$ be a warped product
manifold and $(\overline{M}, \overline{g}) = Q^m = SO_{m+2}/SO_m
SO_2$ the complex quadric. Let $\psi : (M, g)\mapsto (\overline{M},
\overline{g})$ be an isometric immersion. Assume that the manifold
$(M, g)$ is a 2-dimensional totally geodesic
 submanifold of $(\overline{M}, \overline{g})$ with $TM \subset V(A)$.

Then we get
$$
m_1 \cdot 2 \leq \frac{\triangle f}{f} \leq m_1 \cdot 2.
$$
\end{corollary}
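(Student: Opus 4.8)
The plan is to apply Lemma~\ref{lem:l} and then to compute the ambient sectional curvature $\overline{K}$ on the unique tangent $2$-plane of $M$, using the curvature identity (\ref{rel:3-29}) together with the fact that $TM\subset V(A)$.

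First I would dispose of the extrinsic terms. Since $(M,g)$ is totally geodesic in $(\overline{M},\overline{g})$, its second fundamental form $h$ vanishes identically, so $H=0$ and $\|h\|^2=0$. Hence in the inequality (\ref{rel:3-l}) of Lemma~\ref{lem:l} the terms $\frac{m_1 m^2}{2(m-1)}H^2$, $\frac{m_1}{2}\|h\|^2$ and $\frac{m^2}{4m_2}H^2$ all drop out, leaving
$$m_1\,\inf\overline{K}\ \le\ \frac{\triangle f}{f}\ \le\ m_1\,\sup\overline{K}.$$
Moreover $\dim M=2$, so by the definitions (\ref{rel:2-7})--(\ref{rel:2-8}) the only admissible $2$-plane in $T_pM$ is $T_pM$ itself; therefore $(\inf\overline{K})(p)=(\sup\overline{K})(p)=\overline{K}(T_pM)$ for every $p\in M$, and it suffices to show this common value is $2$.

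To evaluate it, fix $p\in M$ and an orthonormal basis $\{X,Y\}$ of $T_pM$; by hypothesis $X,Y\in V(A)$, hence $AX=X$ and $AY=Y$. On the complex quadric $Q^m$ the real structure $A$ anticommutes with $J$, so that $J(V(A))\perp V(A)$ (if $AZ=Z$ then $A(JZ)=-J(AZ)=-JZ$). Consequently $\overline{g}(JX,Y)=0$, and likewise $\overline{g}(JAY,Y)=\overline{g}(JY,Y)=0$, $\overline{g}(JAX,X)=\overline{g}(JX,X)=0$, $\overline{g}(JAX,Y)=\overline{g}(JX,Y)=0$; also $\overline{g}(AY,Y)\,\overline{g}(AX,X)=1$ and $\overline{g}(AX,Y)^2=\overline{g}(X,Y)^2=0$. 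Substituting these into (\ref{rel:3-29}) gives
$$\overline{K}(X\wedge Y)=1+3\cdot 0+1-0+0-0=2.$$
Plugging $\inf\overline{K}=\sup\overline{K}=2$ back into the displayed estimate yields $2m_1\le \frac{\triangle f}{f}\le 2m_1$, which is the assertion; equivalently $f$ is an eigenfunction with eigenvalue $2m_1$.

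The argument is essentially routine once the collapse $h\equiv 0$ is noted; the only point requiring care is the vanishing of all the $J$-terms in (\ref{rel:3-29}), which rests on the anticommutation $AJ=-JA$ built into the curvature tensor (\ref{rel:3-28}) of $Q^m$, equivalently on the orthogonality $V(A)\perp JV(A)$. I would add the remark that this corollary is the degenerate case of the preceding one: a $2$-plane $TM\subset V(A)$ is automatically \emph{not} $J$-invariant (since $JX\perp TM$), yet the relevant ambient curvature is again $2$, so both hypotheses produce the same eigenvalue $2m_1$.
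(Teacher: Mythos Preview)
Your argument is correct and follows exactly the route the paper indicates for the parallel corollaries in this section, namely ``by Lemma~\ref{lem:l} and (\ref{rel:3-29})'': you specialize the general inequality to the totally geodesic $2$-dimensional case and then evaluate (\ref{rel:3-29}) on an orthonormal pair $X,Y\in V(A)$. The only extra ingredient you make explicit---the anticommutation $AJ=-JA$, equivalently $V(A)\perp JV(A)$---is a standard property of the real structure on $Q^m$ used tacitly in the paper, so your write-up is simply a fleshed-out version of the same proof.
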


\begin{lemma}
Let $(M, g) = (B\times_f F, g_B + f^2 g_F)$ be a warped product
manifold and $(\overline{M}, \overline{g}) = Q^m = SO_{m+2}/SO_m
SO_2$ the complex quadric. Let $\psi : (M, g)\mapsto (\overline{M},
\overline{g})$ be an isometric immersion.

There does not exist a 2-dimensional totally geodesic
 submanifold $(M, g)$ of $(\overline{M}, \overline{g})$ with $TM \subset V(A)$ such that
either the warping function $f$ is not an eigen-function or the
eigenvalue of $f$ is not equal to $2m_1$.
\end{lemma}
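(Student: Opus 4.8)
The plan is to prove this non-existence statement as the contrapositive of the corollary immediately preceding it, so the whole argument is a short reduction to Lemma \ref{lem:l}. Assume, for contradiction, that there is a $2$-dimensional totally geodesic submanifold $(M,g) = (B\times_f F, g_B + f^2 g_F)$ of $(\overline{M}, \overline{g}) = Q^m$ with $TM \subset V(A)$ for which either $f$ is not an eigen-function or $f$ is an eigen-function whose eigenvalue is not $2m_1$. First I would use total geodesy: since $h\equiv 0$ we have $H=0$ and $\|h\|^2 = 0$, so both inequalities in Lemma \ref{lem:l} collapse to $m_1\,(\inf\overline{K})(p) \leq \frac{\triangle f}{f} \leq m_1\,(\sup\overline{K})(p)$, where the infimum and supremum run over $2$-planes in $T_pM$.

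Next I would evaluate $\overline{K}(X\wedge Y)$ for an orthonormal pair $X,Y \in T_pM \subset V(A)$ by means of the curvature identity (\ref{rel:3-29}). Here $A$ is the real structure with $V(A) = \{ Z\in T_p\overline{M} \mid AZ = Z\}$, and one has the orthogonal decomposition $T_p\overline{M} = V(A)\oplus JV(A)$, so $JX, JY \in JV(A)$ are orthogonal to every vector of $V(A)$. Consequently $\overline{g}(JX,Y)=0$, $\overline{g}(AX,X)=\overline{g}(AY,Y)=1$, $\overline{g}(AX,Y)=\overline{g}(X,Y)=0$, and the two $JA$-terms vanish as well (using also that $J$ is skew-symmetric). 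Plugging these into (\ref{rel:3-29}) gives $\overline{K}(X\wedge Y) = 1 + 1 = 2$ for every such plane, hence $(\inf\overline{K})(p) = (\sup\overline{K})(p) = 2$ at every $p\in M$.

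Combining the two steps yields $2m_1 \leq \frac{\triangle f}{f} \leq 2m_1$ on $M$, i.e. $\triangle f = 2m_1 f$, so $f$ is an eigen-function with eigenvalue exactly $2m_1$, contradicting the choice of $M$. The only point requiring any computation is the curvature evaluation, and it rests on the single standard fact that $JV(A)\perp V(A)$ in the complex quadric; so I do not expect a genuine obstacle here — the lemma is in essence a restatement of the corollary above in non-existence form.
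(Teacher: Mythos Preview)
Your proposal is correct and follows essentially the same approach as the paper: the lemma is the contrapositive of the preceding corollary, which in turn is obtained from Lemma~\ref{lem:l} together with the curvature formula~(\ref{rel:3-29}), and your evaluation $\overline{K}(X\wedge Y)=2$ for orthonormal $X,Y\in V(A)$ is exactly the computation the paper has in mind. The paper leaves this lemma without an explicit proof, treating it as immediate from the corollary; your write-up simply spells out the details.
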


\begin{corollary}
Let $(M, g) = (B\times_f F, g_B + f^2 g_F)$ be a warped product
manifold and $(\overline{M}, \overline{g}) = Q^m = SO_{m+2}/SO_m
SO_2$ the complex quadric. Let $\psi : (M, g)\mapsto (\overline{M},
\overline{g})$ be an isometric immersion. Assume that the manifold
$(M, g)$ is a 2-dimensional totally geodesic
 submanifold of $(\overline{M}, \overline{g})$ with $TM \perp J(TM)$ and $\dim (TM \cap V(A)) = \dim (TM \cap JV(A)) = 1$.

Then we get
$$
m_1 \cdot 0 \leq \frac{\triangle f}{f} \leq m_1 \cdot 0.
$$
\end{corollary}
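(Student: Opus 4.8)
The plan is to run the same argument used for the preceding corollaries of Theorem \ref{thm:8}: apply Lemma \ref{lem:l}, using that $M$ is totally geodesic to kill all the second fundamental form terms, and using that $\dim M = 2$ to collapse the infimum/supremum of $\overline{K}$ to a single sectional curvature. Indeed, since $\dim T_pM = 2$, the plane $T_pM$ is the only $2$-plane in $T_pM$, so $(\inf\overline{K})(p) = (\sup\overline{K})(p) = \overline{K}(T_pM)$; and since $h\equiv 0$ we have $H\equiv 0$ and $\|h\|^2\equiv 0$. Hence Lemma \ref{lem:l} degenerates to
$$
m_1\,\overline{K}(T_pM) \;\leq\; \frac{\triangle f}{f} \;\leq\; m_1\,\overline{K}(T_pM),
$$
and the whole statement reduces to the claim that $\overline{K}(T_pM) = 0$ at every point under the given hypotheses.

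To compute $\overline{K}(T_pM)$ I would choose a convenient orthonormal basis of $T_pM$ adapted to the real structure $A$. Recall (as in the proof of Theorem \ref{thm:8}) that $A$ is an involution with $A^2 = \mathrm{id}$ and $AJ = -JA$, so that $T_p\overline{M} = V(A)\oplus JV(A)$ is an orthogonal decomposition into the $(+1)$- and $(-1)$-eigenspaces of $A$. The hypotheses $\dim(TM\cap V(A)) = \dim(TM\cap JV(A)) = 1$ then let me pick unit vectors $X\in T_pM\cap V(A)$ and $Y\in T_pM\cap JV(A)$; because $V(A)\perp JV(A)$ these are automatically orthogonal, so $\{X,Y\}$ is an orthonormal basis of $T_pM$. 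Now I substitute into formula (\ref{rel:3-29}): the assumption $TM\perp J(TM)$ gives $\overline{g}(JX,Y) = 0$ and $\overline{g}(JAX,Y) = \overline{g}(JX,Y) = 0$; skew-symmetry of $J$ gives $\overline{g}(JAX,X) = \overline{g}(JX,X) = 0$ and $\overline{g}(JAY,Y) = -\overline{g}(JY,Y) = 0$; and $AX = X$, $AY = -Y$ give $\overline{g}(AX,X) = 1$, $\overline{g}(AY,Y) = -1$, $\overline{g}(AX,Y) = \overline{g}(X,Y) = 0$. Plugging these in, (\ref{rel:3-29}) becomes
$$
\overline{K}(X\wedge Y) \;=\; 1 + 3\cdot 0 + (1)(-1) - 0 + (0)(0) - 0 \;=\; 0 .
$$

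Since this computation is valid at every point of $M$, combining it with Lemma \ref{lem:l} (with $h\equiv 0$) yields $m_1\cdot 0 \leq \frac{\triangle f}{f}\leq m_1\cdot 0$, which is exactly the asserted inequality. The only place that needs care is the linear-algebraic input about $A$ on $Q^m$ — namely that $JV(A)$ is precisely the $(-1)$-eigenspace of $A$ and that it is $\overline{g}$-orthogonal to $V(A)$ — since this is what makes $\{X,Y\}$ orthonormal and forces all but the first and fourth terms of (\ref{rel:3-29}) to vanish. Once that structural fact is invoked, there is no real obstacle: the remaining steps are the verbatim specialization of Lemma \ref{lem:l} already carried out in the earlier corollaries.
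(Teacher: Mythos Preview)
Your proposal is correct and follows essentially the same approach the paper uses throughout: invoke Lemma \ref{lem:l} with $h\equiv 0$ and evaluate formula (\ref{rel:3-29}) on the $2$-plane $T_pM$. The paper itself gives no explicit proof for this corollary (relying on the pattern ``By Lemma \ref{lem:l} and (\ref{rel:3-29})'' established for the earlier quadric corollaries), and your write-up simply fills in the arithmetic that the paper leaves implicit; the structural facts you flag about $A$ (self-adjoint involution, $AJ=-JA$, hence $V(A)\perp JV(A)$) are exactly the standard properties of the real structure on $Q^m$ used in \cite{S}.
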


\begin{lemma}
Let $(M, g) = (B\times_f F, g_B + f^2 g_F)$ be a warped product
manifold and $(\overline{M}, \overline{g}) = Q^m = SO_{m+2}/SO_m
SO_2$ the complex quadric. Let $\psi : (M, g)\mapsto (\overline{M},
\overline{g})$ be an isometric immersion.

There does not exist a 2-dimensional totally geodesic
 submanifold $(M, g)$ of $(\overline{M}, \overline{g})$ with $TM \perp J(TM)$ and $\dim (TM \cap V(A)) = \dim (TM \cap JV(A)) = 1$ such that
 the warping function $f$ is not a harmonic function.
\end{lemma}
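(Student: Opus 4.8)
The plan is to argue by contradiction, reducing immediately to the preceding Corollary. Suppose there existed a $2$-dimensional totally geodesic submanifold $(M,g)$ of $(\overline{M},\overline{g}) = Q^m$ satisfying $TM\perp J(TM)$ and $\dim(TM\cap V(A))=\dim(TM\cap JV(A))=1$, and for which the warping function $f$ is \emph{not} harmonic, i.e.\ $\triangle f\not\equiv 0$ on $M$. Under precisely these hypotheses the Corollary immediately above yields $m_1\cdot 0\le \frac{\triangle f}{f}\le m_1\cdot 0$, hence $\frac{\triangle f}{f}\equiv 0$ on $M$. Since by convention the warping function takes values in $\mathbb{R}^+$, it is nowhere zero, so $\frac{\triangle f}{f}\equiv 0$ forces $\triangle f\equiv 0$; that is, $f$ is harmonic, contradicting the choice of $(M,g)$. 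Therefore no such submanifold exists, which is the assertion of the Lemma.

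For completeness I would recall why the Corollary produces the vanishing two-sided bound, since it is this coincidence of the upper and lower estimates that makes the statement a genuine non-existence result. As $M$ is totally geodesic, $h\equiv 0$, so $H=0$ and $\|h\|^2=0$, and Lemma \ref{lem:l} collapses to $m_1\inf\overline{K}\le \frac{\triangle f}{f}\le m_1\sup\overline{K}$. Because $\dim M=2$, the only tangent $2$-plane at a point $p$ is $T_pM$ itself, so $\inf\overline{K}=\sup\overline{K}=\overline{K}(T_pM)$. Picking an orthonormal frame $\{X,Y\}$ of $T_pM$ with $X\in TM\cap V(A)$ and $Y\in TM\cap JV(A)$ (which exist and are orthogonal since $V(A)\perp JV(A)$), one has $AX=X$ and, writing $Y=JZ$ with $Z\in V(A)$ and using $AJ=-JA$, $AY=-Y$. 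Combining this with $TM\perp J(TM)$ (so $\overline{g}(JX,Y)=0$, $\overline{g}(JAX,Y)=0$) and the skew-symmetry of $J$ (so $\overline{g}(JAY,Y)=\overline{g}(JAX,X)=0$), formula (\ref{rel:3-29}) evaluates to $\overline{K}(X\wedge Y)=1+\overline{g}(AY,Y)\overline{g}(AX,X)=1+(-1)(1)=0$. Hence $\inf\overline{K}=\sup\overline{K}=0$ along $M$, and both sides of the estimate equal $m_1\cdot 0$.

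There is essentially no technical obstacle: the content is the sectional-curvature computation above, which is already carried out in the proof of the Corollary, and the Lemma is simply its logical contrapositive once one notes that equality of the two bounds pins $\frac{\triangle f}{f}$ to a single value. The only point worth stating explicitly is the positivity $f>0$, which is what lets us pass from $\frac{\triangle f}{f}\equiv 0$ to $\triangle f\equiv 0$ and hence to harmonicity of $f$.
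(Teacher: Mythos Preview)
Your proposal is correct and matches the paper's approach: the paper states this Lemma without proof, treating it (like all the analogous Lemmas throughout Sections~3 and~4) as the immediate contrapositive of the preceding Corollary, which is exactly what you do. Your second paragraph, verifying $\overline{K}(X\wedge Y)=0$ from (\ref{rel:3-29}) under the stated hypotheses, is more detail than the paper supplies but is correct and useful.
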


\section{Open questions}\label{esti3}

In section 3 and section 4, we deal with the estimates of the
functions $\frac{\triangle f}{f}$ for isometric immersions $\psi :
(M, g) = (B\times_f F, g_B + f^2 g_F) \mapsto (\overline{M},
\overline{g})$. And we also consider equality cases and their
applications. As future projects, we can use these results to study
the properties of base manifolds and target manifolds and
investigate another equality cases and their applications. We also
estimate the functions $\frac{\triangle f}{f}$ by changing target
manifolds.

\vspace{2mm}

{\bf Questions}

1. What kind of eigenvalues of the warping functions $f$ can we get?

(We obtained the following eigenvalues:

$\displaystyle{m_1 c, \frac{m_1 c}{4}, \frac{m_1 (c+3)}{4}, m_1,
\frac{m_1 (c-3)}{4}, 8m_1, -4m_1, 2m_1, 0}$.)

2. If the warping function $f$ is an eigen-function with eigenvalue
$d$, then what can we say about $M$ and $\overline{M}$?

%\section*{Acknowledgments}

%The authors are grateful to the referees for their valuable comments and suggestions.

\end{document}